\newtheorem{theorem}{Theorem}[section] 
\newtheorem{lemma}[theorem]{Lemma}
\newtheorem{proposition}[theorem]{Proposition}
\newtheorem{corollary}[theorem]{Corollary}
\newtheorem{definition}[theorem]{Definition}
\newcommand{\CC}{\mathbb C}
\newcommand{\HH}{\mathbb H}
\newcommand{\NN}{\mathbb N}
\newcommand{\PP}{\mathbb P}
\newcommand{\QQ}{\mathbb Q}
\newcommand{\RR}{\mathbb R}
\newcommand{\ZZ}{\mathbb Z}
\newcommand{\cD}{\mathcal D}
\newcommand{\cH}{\mathcal H}
\newcommand{\cX}{\mathcal X}
\newcommand{\cY}{\mathcal Y}
\newcommand{\cZ}{\mathcal Z}
\newcommand{\ffZ}{\mathfrak Z}
\newcommand{\Sp}{\mathop{\mathrm {Sp}}\nolimits}
\newcommand{\SL}{\mathop{\mathrm {SL}}\nolimits}
\newcommand{\SO}{\mathop{\mathrm {SO}}\nolimits}
\newcommand{\Orth}{\mathop{\null\mathrm {O}}\nolimits}
\newcommand{\Ker}{\mathop{\mathrm {Ker}}\nolimits}
\newcommand{\Lift}{\mathop{\mathrm {Lift}}\nolimits}
\newcommand{\diag}{\mathop{\mathrm {diag}}\nolimits}
\renewcommand{\Im}{\mathop{\mathrm {Im}}\nolimits}
\newcommand{\rank}{\mathop{\mathrm {rank}}\nolimits}
\newcommand{\id}{\mathop{\mathrm {id}}\nolimits}
\newcommand{\Eins}{{\mathbf 1}}
\newcommand{\latt}[1]{{\langle{#1}\rangle}}
\renewcommand{\div}{\mathop{\mathrm {div}}\nolimits}
\newcommand{\Kthree}{\mathop{\mathrm {K3}}\nolimits}
\begin{document}
\title{Modular forms of orthogonal type and Jacobi theta-series}
\author{F.~Cl\'ery and V.~Gritsenko}
\date{}
\begin{abstract}
In this paper we consider Jacobi forms of half-integral index
for any positive definite lattice $L$ (classical Jacobi forms
from the book of Eichler and Zagier correspond to the lattice
$A_1=\latt{2}$). We give a lot of examples of Jacobi forms of singular
and critical weights for root systems using Jacobi theta-series.
We give the Jacobi lifting for  Jacobi forms of half-integral indices.
In some case it gives additive lifting construction of new reflective
modular forms.
\end{abstract}
\maketitle
\section{Introduction}
\thispagestyle{empty}

The divisor of a reflective modular form with respect to an integral  orthogonal group of signature $(2,n)$ is determined by
reflections. Such modular forms determine Lorentzian
Kac--Moody Lie (super) algebras. The most famous reflective
modular form is the Borcherds function $\Phi_{12}$ with respect
to $\Orth^+(II_{2,26})$ which determines the Fake Monster
Lie algebra (see \cite{B1}). One can consider reflective modular
forms as  automorphic discriminants or
multi-dimensional Dedekind $\eta$-functions (see \cite{B2}--\cite{B3},
\cite{GN1}--\cite{GN4}). Reflective modular forms  play
also an important role in complex algebraic geometry
(see \cite{GHS} and \cite{G10}).
All of them are Borcherds automorphic products  and some of them
can be constructed as additive (or Jacobi) lifting.
If a reflective modular form
can be obtained by additive (Jacobi) lifting then
one has a simple formula for its Fourier coefficients
which determine the generators and relations of Lorentzian
Kac--Moody algebras (see \cite{GN1}).

In \cite{G10} the second author constructed
the Borcherds-Enriques form $\Phi_4$, the automorphic discriminant
of the moduli space of Enriques surfaces (see \cite{B2}),
as Jacobi lifting,
$\Lift\bigl(\vartheta(\tau,z_1)\dots \vartheta(\tau,z_8)\bigr)$,
 of the tensor product of eight classical  Jacobi theta-series
(see \cite{G-K3} for the definition of Lift which provides
a modular form on orthogonal group by its first Fourier--Jacobi coefficient).
This new construction of $\Phi_4$ gives an answer to a problem
formulated  by
K.-I. Yoshikawa (\cite{Y}) and to a question of Harwey
and Moore (\cite{HM}) about the second Lorentzian  Kac--Moody super
Lie algebras determined by the Borcherds--Enriques form $\Phi_4$
and its quasi-pullbacks.

Another application of reflective modular forms of type
$\Lift(\vartheta(\tau,z_1)\dots \vartheta(\tau,z_8))$
is the construction of new examples of modular varieties of orthogonal type
of Kodaira dimension $0$
(see the beginning of \S 2).
The first two examples of this type of dimension $3$
are related to reflective Siegel cusp forms of weight $3$ and
Siegel modular three-folds having compact Calabi--Yau models
(see \cite{GH1}, \cite{CG} and \cite{FS-M}).
In the case of dimension $4$ the unique cusp form of weight $4$
was defined in \cite{G10} as a Borcherds product
but it can also be  constructed as a lifting of a Jacobi form
of half-integral index with a character of order $2$
of the full modular group $\SL_2(\ZZ)$ (see Example 2.4 in \S 2).
Jacobi forms of half-integral index in one variable are very important in the theory of Lorentzian Kac--Moody algebras
of hyperbolic rank $3$  corresponding to  Siegel modular forms
(see \cite{GN1}--\cite{GN3}). Moreover they are very natural
in the structure theory of classical Jacobi forms (in the sense
of Eichler and Zagier \cite{EZ}) and
in applications to topology and string theory
(see \cite{DMVV},  \cite{G-EG}).

In this paper we consider Jacobi forms of half-integral index
for any positive definite lattice $L$ (Jacobi forms
in \cite{EZ} correspond to the lattice $A_1=\latt{2}$).
Jacobi forms in many variables naturally appeared
in the theory of affine Lie algebras (see \cite{K} and \cite{KP}).
One can consider Jacobi forms  as vector valued modular forms
in one variable. Vector valued modular forms are used in
the additive Borcherds lifting (see \cite[\S 14]{B3})
which is a genralization of the Jacobi lifting of  \cite{G-K3}.
In this paper we follow the  general approach to Jacobi forms proposed
in \cite{G-K3}.
The first section contains all necessary definitions and basic results
on Jacobi forms in many variables.
It turns out that the order of the character of the integral Heisenberg
group of such Jacobi forms is always at most  $2$
(see Proposition \ref{pr-index}). Using the classical Jacobi
theta-series we give examples of Jacobi forms for
the root lattices.  We show at the  end of the first section
(Examples 1.8--1.11) that the natural theta-products
give all Jacobi forms of singular weight
(or vector valued $\SL_2(\ZZ)$-modular forms of weight $0$
related to the Weil representation)
for the lattices $D_m$.

In \S 2 we give the Jacobi lifting for  Jacobi
forms of half-integral index with a possible character.
This explicit construction has many advantages:
one can see immediately a part of its divisor,
the maximal modular group of the lifting, etc. We  construct
many modular forms of singular, critical and canonical weights
on orthogonal groups. In particular, we give in Example 2.4
natural reflective generalizations of the classical Igusa modular
form $\Delta_5$.

In \S 3 we analyze  Jacobi forms of singular (the minimal possible)
and critical (singular weight$+\frac{1}2$) weights using the theta-products
and their pullbacks (see Proposition 3.1--3.2).
 In this way we construct many examples
(see Propositions \ref{Ptheta-Am}--\ref{Form-crit}).
In particular, using this approach  we give a new explanation
of theta-quarks  in Corollary \ref{singJ-A2}, which are
the simplest examples of holomorphic  theta-blocks
(see \cite{GSZ}).
\smallskip

{\bf Acknowledgements:}
This work was supported by the grant ANR-09-BLAN-0104-01.
The authors are grateful to the Max-Planck-Institut
f\"ur  Mathematik in Bonn for support and for providing excellent working conditions.

\section{The Jacobi group and Jacobi modular forms}

In this section we discuss Jacobi forms of orthogonal type.
In the definitions below we follow the paper \cite{G1}--\cite{G-K3}
where Jacobi forms were considered as modular forms with respect to
a parabolic subgroup of an orthogonal group of signature $(2,n)$.

By a lattice we mean a free $\ZZ$-module equipped  with a
non-degenerate symmetric bilinear form $(\cdot , \cdot)$ with values
in $\ZZ$. A lattice is even if $(l,l)$ is even for all its elements.
Let $L_2$ be a lattice of signature $(2,n_0+2)$. All the bilinear forms
we deal with can be extended to $L_2\otimes \CC$ (respectively to
$L_2\otimes \RR$) by $\CC$-linearity (respectevely by $\RR$-linearity) and we
use the same notations for these extensions. Let
$$
\cD(L_2)=\{[\cZ] \in \PP(L_2\otimes \CC) \mid (\cZ,\cZ)=0,
\ (\cZ,\overline{\cZ})>0\}^+
$$
be the  $(n_0+2)$-dimensional classical Hermitian domain 
of type IV associated to the lattice $L_2$
(here $+$ denotes one of its two connected components).
We denote by $\Orth^+(L_2\otimes \RR)$ the index $2$ subgroup of the real  orthogonal group
preserving $\cD(L_2)$. Then $\Orth^+(L_2)$ is  the intersection of the integral
orthogonal group $\Orth(L_2)$ with $\Orth^+(L_2\otimes \RR)$. We use the  similar notation
$\SO^+(L_2)$ for the special orthogonal group.

In this paper we assume that $L_2$ is  an even lattice of signature
$(2,n_0+2)$ containing two hyperbolic planes
$$
L_2=U\oplus U_1\oplus L(-1), \qquad U_1=U\simeq
\begin{pmatrix}
0&1\\1&0
\end{pmatrix}
$$
where $L(-1)$ is  a negative definite even lattice of rank $n_0$.
The restriction of the bilinear form $(. ,.)$ to $L(-1)$
i.e. $(. , .)|L(-1)$ is denoted by $-( .,. )$ with $(. , .)$ definite positive.
We fix a basis of the hyperbolic plane $U=\ZZ e\oplus\ZZ f$:
$e\cdot f=(e,f)=0$ and $e^2=f^2=0$. Similarly  $U_1=\ZZ e_1\oplus\ZZ f_1$.
Let $F$ be the totally isotropic plane spanned by $f$ and $f_1$ and let $P_F$
be the parabolic subgroup of $\SO^+(L_2)$ that preserves~$F$. This
corresponds to a $1$-dimensional cusp of the modular variety
$\SO^+(L_2)\backslash\cD(L_2)$ (see \cite{GHS}).
We choose a basis of $L_2$ of the form
$(e,e_1,\dots , f_1,f)$ where $\dots$ denote a basis of $L(-1)$.
In this basis the quadratic form associated to the bilinear form on $L_2$
has the following Gram's matrix
$$
S_2=\left(\begin{array}{cc|ccc|cc}
0&0&0&\cdots&0&0&1\\
0&0&0&\cdots&0&1&0\\
\hline
0&0& & & &0&0\\
\vdots&\vdots& &-S& &\vdots& \vdots\\
0&0& & & &0&0\\
\hline
0&1&0&\cdots&0&0&0\\
1&0&0&\cdots&0&0&0\\
\end{array}\right)
$$
where $S$ is a positive definite integral matrix with even entries
on the main diagonal.
In this paper we denote the {\it positive definite} even integral bilinear form
on the lattice $L$ by $(.,.)$ and the bilinear form of signature $(1,n_0+1)$
on the hyperbolic lattice $U_1\oplus L(-1)$ by $(.,.)_1$. Therefore for any
$v=ne_1+l+mf_1\in L_1$
we have $(v,v)_1=2nm-(l,l)$.

The subgroup $\Gamma^J(L)$ of $P_F$ of
elements acting trivially on the sublattice $L$ is called the \emph{Jacobi group}.
The Jacobi group has a subgroup isomorphic to $\SL_2(\ZZ)$.
For any $A=\left(\smallmatrix a&b\\c&d \endsmallmatrix\right)\in \SL_2(\ZZ)$
we put
\begin{equation}\label{A-im}
\{A\}:=\begin{pmatrix}
A^*&0&0\\
0&\Eins_{n_0}&0\\
0&0&A
\end{pmatrix}\in \Gamma^J(L)\quad\text{where\ }\
A^*=\left(\smallmatrix 0&1\\1&0\endsmallmatrix\right) {}^tA^{-1}
\left(\smallmatrix 0&1\\1&0\endsmallmatrix\right)=
\left(\smallmatrix \ a&-b\\-c&\ d \endsmallmatrix\right).
\end{equation}
The second standard subgroup of $\Gamma^J(L)$
is the Heisenberg group $H(L)$ acting trivially on the totally isotropic plane $F$.
This is  the central extension $\ZZ\rtimes (L\times L)$.
More precisely we define
$$
H(L)=\{[x,y;r]\,:\ x, y\in L,\
r\in \frac{1}{2}\,\ZZ\ \text{\ with }\ r+\frac{1}{2}(x,y)\in \ZZ\}
$$
where
\begin{equation}\label{H(L)}
[x,y;r]:=
\begin{pmatrix}
1&0&{}^tyS& (x,y)/2-r& (y,y)/2\\
0&1&{}^txS&(x,x)/2& (x,y)/2+r\\
0&0&\Eins_{n_0}&x&y\\
0&0&0&1&0\\
0&0&0&0&1\end{pmatrix}
\end{equation}
with $S$ the positive definite Gram matrix of the quadratic form $L$ of rank
$n_0$, $(x,y)={}^txSy$ and  we consider $x$ and $y$ as column vectors.
The multiplication in $H(L)$ is given by the following
formula
\begin{equation}\label{eq-H-prod}
[x,y;r]\cdot [x',y';r']=[x+x',y+y';r+r'+\frac{1}{2}((x,y')-(x',y))].
\end{equation}
In particular, the center of $H(L)$ is equal to $\{[0,0;r],\ r\in \ZZ\}$.
We introduce a subgroup of $H(L)$
$$
H_s(L)=<[x,0;0],\  [0,y;0]\,|\, x,y\in L>
$$
with  a smaller center and we  call  it {\it minimal} integral Heisenberg group of $L$.
The group $\SL_2(\ZZ)$ acts on $H(L)$ by conjugation:
\begin{equation}\label{eq-A-action}
A.[x,y;r]:=\{A\}[x,y;r]\{A^{-1}\}=
[dx-cy,-bx+ay;r].
\end{equation}
Using  \eqref{eq-H-prod} and \eqref{eq-A-action} one can define the integral Jacobi
group or its subgroup $\Gamma_s^J(L)$ as the semidirect product
of $\SL_2(\ZZ)$ with the Heisenberg group $H(L)$ or $H_s(L)$
$$
\Gamma^J(L)\simeq \SL_2(\ZZ)\rtimes H(L)\quad
{\rm and}\quad
\Gamma_s^J(L)\simeq \SL_2(\ZZ)\rtimes H_s(L)
$$
Extending the coefficients we can define the real Jacobi group which is a subgroup
of the real orthogonal group
$\Gamma^J(L\otimes \RR)\simeq \SL_2(\RR)\rtimes H(L\otimes \RR)$.

In what follows we need characters of Jacobi groups. Let $\chi:
\Gamma^J(L)\to \CC^*$ be a character of finite order. Its
restriction to $\SL_2(\ZZ)$, $\chi|_{\SL_2(\ZZ)}$, is an even power
$v_\eta^{D}$ of the multiplier system  of the Dedekind
$\eta$-function and we have
\begin{equation}\label{J-chi}
\chi(\{A\}\cdot [x,y;r])=
v_\eta^D(A)\cdot \nu([x,y;r]), \quad\text{ where }\
\chi|_{\SL_2(\ZZ)}=v_\eta^{D}, \ \ \nu=\chi|_{H(L)}.
\end{equation}
If $D$ is odd then we obtain a multiplier system of the Jacobi group.
The properties of the character of the Heisenberg group are clarified
by the next proposition.

\begin{proposition}\label{pr-char}
{\bf 1.} Let $s(L)\in \NN^*$ (resp. $n(L)$) denote the generator of the integral ideal
generated by $(x,y)$  (resp. $(x,x)$) for $x$ and $y$ in $L$.
Let $\left[H(L),H(L)\right]$ be the derivated group of $H(L)$.
Then
$$
\left[H(L),H(L)\right]=\left[H_s(L),H_s(L)\right]=\{[0,0; r]\,|\, r\in  s(L)\ZZ\}.
$$
This subgroup is the center of $H_s(L)$.

{\bf 2.}
Let $\nu: H(L)\to \CC^*$ be a  character  of finite order which is invariant
with respect to $\SL_2(\ZZ)$-action $\nu(A.[x,y;r])=\nu([x,y;r])$. Then
$$
\nu([x,y;r])=e^{\pi i t ((x,x)+(y,y)-(x,y)+2r)}
$$
where $t\in \QQ$ such that $t\cdot s(L)\in \ZZ$.
The restriction $\nu|_{H_s(L)}$ is a binary character which  is trivial if
$t\cdot n(L)\in 2\ZZ$.
\end{proposition}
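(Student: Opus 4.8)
The plan is to handle the two parts in turn, each time reducing everything to the single multiplication rule \eqref{eq-H-prod} and its consequence for commutators.

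\textbf{Part 1.} First I would compute an arbitrary commutator. With $g=[x,y;r]$ and $g'=[x',y';r']$, one has $[x,y;r]^{-1}=[-x,-y;-r]$, and a direct application of \eqref{eq-H-prod} gives $g g' g^{-1} g'^{-1}=[0,0;(x,y')-(x',y)]$. Thus every commutator is central and is encoded by the integer $(x,y')-(x',y)$. Letting $x,y,x',y'$ range over $L$ (for instance with $x'=y=0$) the symbols $(x,y')-(x',y)$ generate precisely the ideal $s(L)\ZZ$ of the values $(x,y)$, which yields $[H(L),H(L)]=\{[0,0;r]\mid r\in s(L)\ZZ\}$. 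The generating commutators $[[x,0;0],[0,y;0]]=[0,0;(x,y)]$ already lie in $H_s(L)$ and generate the same subgroup, whereas $[[x,0;0],[x',0;0]]$ and $[[0,y;0],[0,y';0]]$ are trivial; hence $[H_s(L),H_s(L)]$ coincides with it. To see this is the center of $H_s(L)$, I would observe that $[x,y;r]$ is central exactly when its commutators $[0,0;-(x',y)]$ and $[0,0;(x,y')]$ with the generators $[x',0;0]$, $[0,y';0]$ all vanish; nondegeneracy of the positive definite form on $L$ then forces $x=y=0$, and $[0,0;r]\in H_s(L)$ forces $r\in s(L)\ZZ$.

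\textbf{Part 2.} Since $\nu$ is a homomorphism to the abelian group $\CC^*$ it is trivial on $[H(L),H(L)]$, so by Part 1 it factors through the abelianization. Using the factorization $[x,y;r]=[x,0;0]\,[0,y;0]\,[0,0;r-\tfrac12(x,y)]$, which is legitimate because $r-\tfrac12(x,y)\in\ZZ$, I would express $\nu$ through the additive characters $\alpha(x)=\nu([x,0;0])$ and $\beta(y)=\nu([0,y;0])$ of $L$ and the central value $\gamma=\nu([0,0;1])$, so that $\nu([x,y;r])=\alpha(x)\beta(y)\gamma^{r-\frac12(x,y)}$. I then impose $\SL_2(\ZZ)$-invariance via \eqref{eq-A-action} for the two unipotent generators: the matrix $\left(\smallmatrix 1&1\\0&1\endsmallmatrix\right)$ sends $[x,y;r]$ to $[x,y-x;r]$ and forces $\beta(x)=\gamma^{(x,x)/2}$, while $\left(\smallmatrix 1&0\\-1&1\endsmallmatrix\right)$ sends it to $[x+y,y;r]$ and forces $\alpha(x)=\gamma^{(x,x)/2}$ (both powers being integral since $L$ is even). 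Writing $\gamma=e^{2\pi i t}$ and substituting back collapses the formula to $\nu([x,y;r])=e^{\pi i t((x,x)+(y,y)-(x,y)+2r)}$, the claimed expression.

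To pin down the admissible $t$, I would use that $\alpha(x)=e^{\pi i t(x,x)}$ must be genuinely additive on $L$; comparing $\alpha(x+x')$ with $\alpha(x)\alpha(x')$ produces the cross factor $e^{2\pi i t(x,x')}$, which has to equal $1$ for all $x,x'\in L$. As the values $(x,x')$ generate $s(L)\ZZ$, this is exactly $t\cdot s(L)\in\ZZ$, and finiteness of the order gives $t\in\QQ$. For the final assertion I would restrict to $H_s(L)$ and evaluate on the representative $[x,y;\tfrac12(x,y)]=[x,0;0]\,[0,y;0]$, where the exponent collapses to $(x,x)+(y,y)$, so that $\nu([x,y;r])=e^{\pi i t(x,x)}e^{\pi i t(y,y)}$ (elements differing by the central subgroup $s(L)\ZZ$ are unaffected, since $t\cdot s(L)\in\ZZ$). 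Because $s(L)\mid(x,x)$ together with $t\cdot s(L)\in\ZZ$ give $t(x,x)\in\ZZ$, each factor is $\pm1$ and the restriction is binary; and it is trivial once $t(x,x)\in2\ZZ$ for every $x$, which follows from $t\cdot n(L)\in2\ZZ$.

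I expect the main obstacle to be the bookkeeping in Part 2 rather than any single hard idea: keeping track of the half-integral $r$ in the factorization, and above all confirming that the closed formula really is a homomorphism, which is precisely where the hypothesis $t\cdot s(L)\in\ZZ$ enters. The clean separation of the ``binary'' and ``trivial'' conclusions also relies on the arithmetic relation $s(L)\mid n(L)\mid 2s(L)$ coming from the polarization identity $2(x,y)=(x+y,x+y)-(x,x)-(y,y)$, which must be invoked to compare the two ideals.
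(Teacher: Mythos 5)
Your proof is correct and takes essentially the same route as the paper's: Part 1 is the identical commutator computation, and Part 2 rests on the same decomposition $[x,y;r]=[x,0;0]\cdot[0,y;0]\cdot[0,0;r-\tfrac{1}{2}(x,y)]$ together with the central character $e^{2\pi i t r}$ and $\SL_2(\ZZ)$-invariance tested on generators, differing only in that you test against the two unipotent matrices while the paper uses $-\Eins_2$, the inversion and the translation matrix. Your explicit verification that the commutator subgroup is the center of $H_s(L)$, and your derivation of $t\cdot s(L)\in\ZZ$ from additivity of $\alpha(x)=\nu([x,0;0])$ rather than directly from the commutator relation, are harmless variants of steps the paper states without detail.
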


{\bf Remark.} The constants $s(L)$ and $n(L)$ are called
{\it scale} and {\it norm} of the integral lattice $L$.
The scale $s(L)$ is  the greatest common divisor of the entries of the Cartan matrix $S$
of $L$. For any even lattice $L$ the norm $n(L)$ is an even divisor of $s(L)$.

\begin{proof}
The first property follows from the formula for the commutator of the
elements of $H(L)$
\begin{equation}\label{eq-H-comt}
[x,y;r]\cdot [x',y';r']\cdot [x,y;r]^{-1}\cdot [x',y';r']^{-1}=
[0,0;(x,y')-(x',y)]
\end{equation}
because $[x,y;r]^{-1}=[-x,-y;-r]$.

Considering the restriction of the character to the center of $H(L)$, isomorphic
to $\ZZ$, we get
$\nu([0,0;r])=\exp(2\pi i t r)$ with  $t\in \QQ$.
The formula for the commutator \eqref{eq-H-comt} shows that $t\cdot s(L)\in \ZZ$.

The invariance of the character with respect to
$A=-\Eins_{2}$,
$A=\left(\begin{smallmatrix} 0&-1\\1&\ 0\end{smallmatrix}\right)$
and
$A=\left(\begin{smallmatrix} 1&1\\0&1\end{smallmatrix}\right)$ gives us
$
\nu([x,y;r])=\nu([-x,-y;r])=\nu([-y,x;r])=\nu([x,y-x;r])
$.
Therefore $\nu([x,0;0])=\nu([-x,0;0])=\nu([x,0;0])^{-1}$,
$\nu([0,y;0])=\nu([y,0;0])$ and $\nu([x,0;0])=\nu([x,-x;0])$.
We have
$$
[x,-x;0]=[x, 0;0]\cdot [0, -x; 0]\cdot [0,0,\tfrac{1}2(x,x)].
$$
Therefore $\nu([x,0;0])=\nu([x,-x;0])=e^{\pi i t (x,x)}$ and the final formula
follows from the decomposition
$$
[x,y;r]=[x,y;\tfrac{1}2(x,y)]\cdot [0,0;r-\tfrac{1}2(x,y)]=
[x,0;0]\cdot[0,y;0]\cdot [0,0;r-\tfrac{1}2(x,y)].
$$
We see that $t\cdot (x,x)\in \ZZ$. Therefore the order of $\nu|_{H_s(L)}$
is equal to $1$ or $2$.
\end{proof}
In order to define Jacobi forms we have to fix a tube realization
of the homogeneous domain $\cD(L_2)$ related to the $1$-dimensional
boundary component of its Baily--Borel compactification corresponding
to the Jacobi group related to the isotropic tower $\latt{f}\subset \latt{f,f_1}$.
Let $[\cZ]=[\cX+i\cY] \in \cD(L_2)$. Then
$$
(\cX,\cY)=0,\ (\cX,\cX)=(\cY,\cY)\ \text{\ and \ }(\cZ,\overline{\cZ})=2(\cY,\cY)>0.
$$
Using the basis $\latt{e,f}_\ZZ=U$ we write $\cZ=z'e+\widetilde Z+zf$ with
$\widetilde Z\in L_1\otimes \CC$, where
$$
L_1=U_1\oplus L(-1)
$$
is the hyperbolic lattice of signature $(1,n_0+1)$ with the bilinear form $(\cdot,\cdot)_1$.
We note that $z\ne 0$. (If $z=0$ then  the real and imaginary parts of
$\widetilde Z$
form two orthogonal vectors of positive norm in the hyperbolic lattice $L_1\otimes \RR$.)
Thus $[{}^t\cZ]=[(-\frac{1}2(Z,Z)_1, {}^tZ,1)]$. Using the basis $\latt{e_1,f_1}_\ZZ=U_1$
of the second hyperbolic plane in $L$ we see that
$\cD(L_2)$ is isomorphic to the tube domain
$$
\cH(L)=\cH_{n_0+2}(L)=
\{Z=\left(\smallmatrix\omega\\ \mathfrak{Z}\\ \tau
\endsmallmatrix\right),\
\tau,\,\omega\in\HH_1,\  \ffZ\in L\otimes \CC,\
(\Im Z, \Im Z)_1>0\}
$$
where
$$
(\Im Z, \Im Z)_1=2\Im(\omega)\Im(\tau)-(\Im(\ffZ),\Im(\ffZ))>0.
$$
We fix the isomorphism $[\rm{pr}]:\cH(L)\to\cD(L_2)$
defined
by the  $1$-dimensional  cusp $F$ fixed above
\begin{equation}\label{pr-Z}
Z=\left(\begin{smallmatrix}\omega\\ \ffZ \\ \tau\end{smallmatrix}\right)\
{\mapsto}\ {{\rm{pr}}(Z)}=
\left(\begin{smallmatrix}-\frac{1}{2}(Z,Z)_1\\\omega\\ \ffZ \\
\tau\\1\end{smallmatrix}\right)
\mapsto\ \left[{\rm{pr}}(Z)\right].
\end{equation}
The map $\rm{pr}$  gives us the embedding of
$\mathcal{H}(L)$ into the affine cone $\cD^\bullet(L_2)$ over
$\cD(L_2)$. Using the map $[\rm{pr}]$, we can define a
linear-fractional action of $M\in \Orth^+(L_2\otimes \RR)$ on the tube
domain
$$
M\cdot {\rm{pr}}(Z)=J(M,Z)\cdot {\rm{pr}}(M\latt{Z})
$$
where the automorphic factor $J(M,Z)$ is the last (non-zero) element
of the column vector $M\cdot {\rm{pr}}(Z)\in \cD^\bullet(L_2)$.
In particular for the standard elements of  the Jacobi group we have the following
action
$$
\{A\}\langle Z \rangle=
{}^t\bigl(\omega-\frac{c(\mathfrak{Z},\mathfrak{Z})}{2(c\tau+d)},\,
\frac{{}^t\mathfrak{Z}}{c\tau+d},\, \frac{a\tau+b}{c\tau+d}),
\quad
A=\begin{pmatrix}a&b\\c&d\end{pmatrix}\in \SL_2(\RR);
$$
$$
\left[x,y;r\right]\langle Z \rangle=
{}^t\bigl(\omega+\tfrac{1}{2}(x,x)\tau+(x,\mathfrak{Z})+\tfrac{1}{2}(x,y)+r,
\,{}^t(\mathfrak{Z}+x\tau+y),\, \tau\bigl)
$$
where $x,y\in L\otimes \RR$, $r\in \RR$.
We note that
$J(\{A\},Z)=c\tau+d$ and $J([x,y;r],Z)=1$.

For a function $\psi: \cH(L)\to \CC$, we define as usual
$$
(\psi\vert_{k}M)(Z):= J(M,Z)^{-k}\psi(M\latt{Z}),
\qquad M\in \Orth^+(L_2\otimes \RR).
$$
In the next definition, Jacobi forms are considered as modular forms with
respect to the parabolic subgroup $\Gamma^J(L)$ of $\Orth^+(L_2)$.
\begin{definition}\label{def-JF}
Let $\chi$ be a character (or a multiplier system) of finite order of $\Gamma^J(L)$,
$k$ be  integral or half-integral and
$t$ be a (positive) rational number.
A holomorphic function  $\varphi: \HH_1\times (L\otimes \CC)\to \CC$
is called a holomorphic
{\bf{Jacobi form}} of weight $k$ and index $t$ with a character (or a multiplier system) $\chi$
if the function
$$
\widetilde{\varphi}(Z)=\varphi(\tau,\mathfrak{Z})e^{2i\pi t\omega},
\quad Z=\left(\begin{smallmatrix}\omega \\ \mathfrak{Z} \\ \tau
\end{smallmatrix}\right) \in \cH(L)
$$
satisfies the functional equation
\begin{equation}\label{eq-JF}
\widetilde{\varphi}\vert_{k}M=\chi(M)\widetilde{\varphi} \quad
{\rm for\  any\ } M\in\Gamma^J(L)
\end{equation}
and  is holomorphic at ``infinity'' (see the condition
\eqref{F-exp-phi} below).
\end{definition}

\noindent
{\bf Remarks.} 1) We show  below that for any non zero Jacobi form of rational index $t$
we have $t\cdot s(L)\in \ZZ$  where  $s(L)$ is  the scale of the lattice
(see Proposition \ref{pr-char}).

2) One can reduce this definition to the only two cases $t=1$ and $t=\frac{1}2$
(see Proposition \ref{pr-index}).

3) One can give another  definition of Jacobi forms in more intrinsic terms
(see Definition 1.2').
\smallskip

In order to precise  the condition ``to be holomorphic at infinity''
we analyse the character $\chi$, the functional equation  and the
Fourier expansion of Jacobi forms. We  decompose the character into
two parts
$$
\chi=\chi|_{\SL_2(\ZZ)}\times \chi|_{H(L)}=\chi_1\times \nu,\qquad \text{where }
\chi_1=v_\eta^D
$$
(see \eqref{J-chi})
and $\nu$ satisfies the condition of Proposition \ref{pr-char}.
For a central element $[0,0;(x,y)]\in H_s(L)$ the equation \eqref{eq-JF} gives
$\nu([0,0;(x,y)])=e^{2 \pi i t (x,y)}=1$. Therefore
$$
t\cdot s(L)\in \ZZ\quad {\text{if $\varphi$ is not identically zero}}
$$
and
$$
\nu([x,y;r])=e^{\pi i t ((x,x)+(y,y)-(x,y)+2r)},
\qquad [x,y;r]\in H(L)
$$
as in Proposition \ref{pr-char}.

The formulae above for the action of the generators of the Jacobi group
on the tube domain  define also  an action, denoted by $M\langle \tau,\mathfrak{Z}\rangle$,
of the real Jacobi group  $\Gamma^J(L\otimes \RR)$
on the domain $\HH_1\times (L\otimes \CC)$.
We can always add to any
$(\tau,\mathfrak{Z})\in \HH_1\times (L\otimes \CC)$
a complex number $\omega\in \HH_1$ such that
$Z=\left(\begin{smallmatrix}\omega\\ \mathfrak{Z}\\ \tau\end{smallmatrix}\right)$
belongs to $\cH(L)$.
If we denote the first component of $M\langle Z\rangle$
(that is the component along the vector $e_1$ of our basis)
by $\omega\{M\langle Z\rangle\}$ for $M\in \Gamma^J(L\otimes \RR)$ then
$$
J_{k,t}(M;\tau,\mathfrak{Z})=J(M,Z)^ke^{-2i\pi t\omega\{M\langle Z\rangle\}}e^{2i\pi t\omega}
$$
defines an automorphic factor of weight $k$ and index $t$ for the Jacobi group.
For the generators of the Jacobi group, we get
$$
J_{k,t}(\{A\};\tau,\mathfrak{Z})=(c\tau+d)^k e^{i\pi
t\frac{c(\mathfrak{Z},\mathfrak{Z})}{c\tau+d}}, \quad
A=\left(\begin{smallmatrix}a&b\\c&d\end{smallmatrix}\right)\in
\SL_2(\RR)
$$
and
$$
J_{k,t}(\left[x,y;r\right];\tau,\mathfrak{Z})=
e^{-2i\pi t(\frac{1}{2}(x,x)\tau+(x,\mathfrak{Z})+\frac{1}{2}(x,y)+r)}.
$$
We also get an action of the Jacobi group on the space of functions
defined on $\HH_1\times (L\otimes \CC)$:
$$
(\varphi\vert_{k,t}M)(\tau,\mathfrak{Z}):=
J^{-1}_{k,t}(M;\tau,\mathfrak{Z})\varphi(M\langle \tau,\mathfrak{Z}\rangle).
$$
Then the equation $\eqref{eq-JF}$ in the definition of Jacobi forms  is equivalent to
$$
(\varphi\vert_{k,t}M)(\tau,\mathfrak{Z})=\chi(M)\varphi(\tau,\ffZ),
\qquad M\in\Gamma^J(L).
$$
For the generators of the  Jacobi group we obtain
\begin{equation}\label{jacobi-A}
\varphi(\frac{a\tau+b}{c\tau+d},\frac{\mathfrak{Z}}{c\tau+d})
=\chi(A)(c\tau+d)^k e^{i\pi t\, \frac{c(\mathfrak{Z},\mathfrak{Z})}{(c\tau+d)}}
\varphi(\tau,\mathfrak{Z})
\end{equation}
for all
$A=\left(\begin{smallmatrix}a&b\\c&d\end{smallmatrix}\right)\in\SL_2(\ZZ)$
and
\begin{equation}\label{jacobi-H}
\varphi(\tau,\mathfrak{Z}+x\tau+y)=\chi([x,y; \tfrac{1}{2}(x,y)])
e^{-i\pi t ((x,x)\tau+2(x,\mathfrak{Z}))}\varphi(\tau,\mathfrak{Z})
\end{equation}
for all $x,y\in L$.

\noindent (We note that $t(x,y)\in \ZZ$ for any $x$, $y$ in $L$ if
$\varphi\not\equiv 0$.) The variables $\tau$ and $\ffZ$ are called
modular and abelian variables. To clarify the last condition of
Definition \ref{def-JF} we consider the Fourier expansion of a
Jacobi form $\varphi$.

We see that the function $\varphi$ have the following periodic
properties
$$
\varphi(\tau+1,\ffZ)=e^{2\pi i\frac{D}{24}}\varphi(\tau,\ffZ) \qquad
{\rm{and}} \qquad
\varphi(\tau,\ffZ+2y)=\nu([0,2y;0])\varphi(\tau,\ffZ)=\varphi(\tau,\ffZ).
$$
The function $\varphi$ is called {\it holomorphic at infinity} if it
has the  Fourier expansion of the following type
\begin{equation}\label{F-exp-phi}
\varphi(\tau,\mathfrak{Z})= \sum_{\substack{n\in \QQ_{\geqslant0},\,
n\equiv \frac{D}{24} \bmod \ZZ,\ l\in \frac 1{2} L^\vee
\\ \vspace{1\jot} 2nt-(l,l)\geqslant 0}}
f(n,l)e^{2i\pi({n}\tau+(l,\mathfrak{Z}))}
\end{equation}
where $L^\vee$ is the dual lattice of the even positive definite
lattice $L$.
This condition is equivalent to the fact that the function $\widetilde{\varphi}$
is holomorphic at the zero-dimensional cusp defined by isotropic vector $f$
in the first copy $U$ in the lattice $L_2=U\oplus U_1\oplus L(-1)$.

Definition \ref{def-JF} suits well for the applications considered in this paper
but we can give another definition which does not depend on the orthogonal
realization of the Jacobi group
$\Gamma_s^J(L)\simeq \SL_2(\ZZ)\rtimes H_s(L)$.
\smallskip

\noindent
{\bf Definition 1.2'.}
{\it A holomorphic function  $\varphi: \HH_1\times (L\otimes \CC)\to \CC$
is called a holomorphic
Jacobi form of weight $k\in \frac{1}{2}\ZZ$ and index $t\in \QQ$
with a character (or a multiplier system) of finite order $\chi: \Gamma^J_s(L)\to \CC^*$
if $\varphi$ satisfies the functional equations \eqref{jacobi-A} and  \eqref{jacobi-H}
and has  a Fourier expansion of type \eqref{F-exp-phi}}.
\smallskip

\noindent
{\bf Remarks.} 1) {\it The classical Jacobi forms of Eichler and Zagier}.
Note that for $n_0=1$, the tube  domain $\cH_{3}(L)$ is isomorphic
to the classical Siegel upper-half space of genus $2$.
If $L\simeq A_1=\latt{2}$ is the lattice $\ZZ$ with quadratic form $2x^2$
and $\chi=\id$ then the definition above is identical
to the definition of Jacobi forms of integral weight $k$ and index $t$ given
in the book \cite{EZ}.

2) The difference between the definitions 1.2 and 1.2' is the
character of the center of the ``orthogonal'' Heisenberg group
$H(L)$. It is more natural to  consider a Jacobi forms $\varphi$
as a modular form with respect to the minimal Jacobi group
$\Gamma_s^J(L)$ and the {\it extended} Jacobi form
$\widetilde{\varphi}(Z)=\varphi(\tau, \ffZ)e^{2\pi i t \omega}$ with
respect to $\Gamma^J(L)$.

\smallskip

We denote the vector space of Jacobi forms  from Definition
\ref{def-JF}'  by ${J}_{k,L;t}(\chi)$ where $\chi=v_\eta^D\times
\nu$ defined by  a character (or a multiplier system) $v_\eta^D$ of
$\SL_2(\ZZ)$ and a binary (or trivial) character $\nu$  of
$H_s(L)$. The space of Jacobi forms from Definition 1.2 is denoted
by $\widetilde{J}_{k,L;t}(v_{\eta}^D\times \widetilde{\nu})$ with evident
modification for the central part of $\widetilde{\nu}$. The character  $\chi$
of $\Gamma^J(L)$ and its restriction
$\widetilde{\chi}=\chi|_{\Gamma_s^J(L)}$ determine each other uniquely
and we
denote both of them by $\chi$, $\widetilde {J}_{k,L;t}(\chi)\simeq
{J}_{k,L;t}(\chi)$ and we will identify these spaces.

A function
$$
\varphi(\tau,
\ffZ)=\sum_{n,l}f(n,l)e^{2i\pi({n}\tau+(l,\mathfrak{Z}))}\in
J_{k,L;t}(\chi)
$$
is called a {\it{Jacobi cusp form}} if $f(n,l)\ne 0$ only if the
hyperbolic norm of its index is positive: $2nt-(l,l)>0$. We denote
this vector space by $J^{cusp}_{k,L;t}(\chi)$. We define the order
of $\varphi$ as follows
\begin{equation}\label{ord-phi}
{\rm Ord} (\varphi)=\min_{f(n,l)\ne 0} (2nt-(l,l)).
\end{equation}
When the character (or the multiplier system) is trivial, we omit it
in the notation of these spaces. If $\chi=v_\eta^D\times \id$, we omit the trivial part.
We see from the definition that $\varphi\equiv 0$ if $t<0$. If $t=0$ then
$J_{k,L;0}(\chi)=M_k(\SL_2(\ZZ), \chi|_{\SL_2})$.
In fact a Jacobi form  corresponds to a vector-valued modular form
of integral or half-integral weight related to the  Weil representation
of the lattice $L(t)$  and  $J_{k,L;t}(\chi)$ is finite dimensional
(see \S 3).
\smallskip

The notation $L(t)$ stands for the lattice $L$ equipped with
bilinear for $t(\cdot ,\cdot )$. We proved above  that if
$J_{k,L;t}(\chi)\ne \{0\}$ then the lattice $L(t)$ is integral.
Any Jacobi form with trivial character  can be  considered as Jacobi
form of  index $1$ (see \cite[Lemma 4.6]{G-K3}). In general we have
the following

\begin{proposition}\label{pr-index}
{\bf 1.} If $L(t)$ is an  even  lattice then
$$
J_{k,L;t}(\chi)=J_{k,L(t);1}(\chi).
$$
If this space is non-trivial then the Heisenberg part  $\nu=\chi|_{H_s(L)}$
of the character is trivial.

{\bf 2.} If $L(t)$ is integral odd (non-even) lattice then
$$
J_{k,L;t}(\chi)=J_{k,L(2t); \frac{1}2}(\chi).
$$
In this case the character $\nu=\chi|_{H_s(L)}$ is of order $2$.

{\bf 3.} If $\varphi(\tau, \ffZ)\in J_{k,L;t}(v_\eta^D\times \nu)$
then $\varphi(\tau, 2\ffZ)\in  J_{k,L;4t}(v_\eta^D\times \id)$.
\end{proposition}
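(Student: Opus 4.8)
The plan is to exploit one structural observation: the transformation laws \eqref{jacobi-A} and \eqref{jacobi-H} and the growth condition \eqref{F-exp-phi} depend on the pair $(L,t)$ only through the rescaled bilinear form $t(\cdot,\cdot)$, which is exactly the form carried by the lattice $L(t)$. Granting this, all three identifications amount to rewriting the same data under a change of normalisation, and the statements about $\nu$ are read off from Proposition \ref{pr-char}. For the first assertion I would put $M=L(t)$, an even lattice by hypothesis, and check that the index-$1$ transformation laws for $M$ coincide termwise with the index-$t$ laws for $L$: the factor $e^{i\pi t c(\ffZ,\ffZ)/(c\tau+d)}$ in \eqref{jacobi-A} is literally the index-$1$ factor $e^{i\pi (\ffZ,\ffZ)_M c/(c\tau+d)}$, and the same substitution turns \eqref{jacobi-H} and the Heisenberg character of Proposition \ref{pr-char} into their $L(t)$-counterparts (the abstract groups $H_s(L)$ and $H_s(M)$ being identified through their generators $[x,0;0],[0,y;0]$, on which both characters take the common value $e^{\pi i t(x,x)}$). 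On the Fourier side I would verify that $L(t)^\vee=\tfrac1t L^\vee$ inside $L\otimes\QQ$, so that $l\mapsto l/t$ matches the index set $l\in\tfrac12 L^\vee$ with $l\in\tfrac12 M^\vee$, preserves the exponent $(l,\ffZ)=(l/t,\ffZ)_M$, and turns $2nt-(l,l)\ge0$ into $2n-(l,l)_M\ge0$. This yields $J_{k,L;t}(\chi)=J_{k,L(t);1}(\chi)$. The triviality of $\nu|_{H_s(L)}$ is then immediate: for a nonzero form Proposition \ref{pr-char} forces $\nu([x,0;0])=e^{\pi i t(x,x)}$, and $L(t)$ even means $t(x,x)\in2\ZZ$ for all $x$, so $\nu$ kills every generator of $H_s(L)$.

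The second assertion is the same computation with $(L(2t),\tfrac12)$ in place of $(L(t),1)$, since $\tfrac12(\cdot,\cdot)_{L(2t)}=\tfrac12\cdot 2t(\cdot,\cdot)=t(\cdot,\cdot)$ and $L(2t)$ is even because doubling the form of an integral lattice always produces an even one. Here $L(t)$ is merely odd integral, so some $t(x,x)$ is an odd integer and $\nu([x,0;0])=e^{\pi i t(x,x)}=-1$; as $\nu|_{H_s(L)}$ is binary by Proposition \ref{pr-char}, its order is exactly $2$.

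For the third assertion I would set $\psi(\tau,\ffZ)=\varphi(\tau,2\ffZ)$ and check the three defining conditions directly. Feeding $2\ffZ$ into \eqref{jacobi-A} replaces $(2\ffZ,2\ffZ)$ by $4(\ffZ,\ffZ)$, producing the index-$4t$ elliptic factor while leaving $v_\eta^D$ untouched; applying \eqref{jacobi-H} to $\varphi$ with $(2x,2y)$ in place of $(x,y)$ gives the index-$4t$ factor $e^{-i\pi(4t)((x,x)\tau+2(x,\ffZ))}$ together with the Heisenberg multiplier $e^{4\pi i t((x,x)+(y,y))}$, which equals $1$ because $t(x,x),t(y,y)\in\ZZ$ (indeed $s(L)\mid(x,x)$ and $t\,s(L)\in\ZZ$); thus the new character is $v_\eta^D\times\id$. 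Finally the series $\sum f(n,l)e^{2\pi i(n\tau+(2l,\ffZ))}$ has indices $2l\in L^\vee$ and $2n(4t)-(2l,2l)=4(2nt-(l,l))\ge0$, which is precisely \eqref{F-exp-phi} for index $4t$.

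The only real care needed, the step I would treat as the crux of the bookkeeping rather than a genuine difficulty, is the Fourier-side matching in the first two parts: tracking simultaneously how the dual lattice $L^\vee$, the exponential pairing $(l,\ffZ)$, and the inequality $2nt-(l,l)\ge0$ transform under the rescaling $L\rightsquigarrow L(t)$ or $L(2t)$, so that the index sets correspond bijectively. Everything else is a direct substitution into the transformation formulas already derived above, together with a citation of Proposition \ref{pr-char} for the shape and order of $\nu$.
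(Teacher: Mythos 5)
Your proof is correct, but it follows a genuinely different (and more elementary) route than the paper's for the index identifications in parts 1 and 2. You work intrinsically with Definition 1.2', checking that the functional equations \eqref{jacobi-A}, \eqref{jacobi-H} and the Fourier condition \eqref{F-exp-phi} for $(L,t)$ coincide termwise with those for $(L(t),1)$, respectively $(L(2t),\tfrac12)$, so that the function itself never changes --- only the labels (lattice, index, and the parametrization of the Fourier support via $\tfrac12 L(t)^\vee=\tfrac1{2t}L^\vee$) do; the dual-lattice bookkeeping you single out as the crux is precisely the part the paper leaves implicit. The paper argues instead on the orthogonal side: it passes to the extended form $\widetilde{\varphi}(Z)=\varphi(\tau,\ffZ)e^{2\pi i t\omega}$ on the tube domain, introduces the rescaling map $\pi_N$ of \eqref{pi-N}, and conjugates the generators of the Jacobi group by $I_{2t}=\diag((2t)^{-1}\Eins_2,\Eins_{n_0},\Eins_2)$, obtaining $I_{2t}[x,y;r]_{2t}I_{2t}^{-1}=[x,y;\tfrac{r}{2t}]$, which is the matrix realization of the abstract isomorphism $H_s(L(2t))\cong H_s(L)$ that you set up through generators. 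Both encode the same rescaling, but each buys something: the paper's version produces the map $\pi_N$ and the conjugation formulas that are reused verbatim in the proof of Theorem \ref{thm-lift} in \S 2 (the paper announces this motivation explicitly), and it tracks the action of the center of the full Heisenberg group $H(L)$, which matters for extended forms in the sense of Definition \ref{def-JF}; your version is self-contained, makes the equality of spaces essentially tautological once the termwise matching is done, verifies the Fourier-side correspondence explicitly, and actually proves part 3 (via the computation $\nu([2x,2y;2(x,y)])=e^{4\pi i t((x,x)+(y,y))}=1$), a statement the paper's written proof omits entirely. The character assertions in parts 1 and 2 are handled identically in both arguments, by reading off $\nu([x,0;0])=e^{\pi i t(x,x)}$ from Proposition \ref{pr-char}.
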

\noindent
{\bf Remark.} This proposition shows that we have to distinguish
in fact only between index $1$ and $\frac{1}2$.
In what follows
we denote by  $J_{k,L}(\chi)$ the space of Jacobi forms of index $1$.

\begin{proof} If $L(t)$ is even then $t(x,x)\in 2\ZZ$ for any $x\in L$.
Therefore $\chi([x,0;0])=e^{i\pi t(x,x)}=1$ and the Heisenberg part
of $\chi$ is trivial. If $L(t)$ is odd then there exists $x\in
L$ such that $t(x,x)$ is odd. Therefore the Heisenberg part of
$\chi$ is non-trivial.

We prove the proposition about the indexes  using a map that we will need in \S 2.
We define an  application for $N\in \QQ_{>0}$
\begin{equation}\label{pi-N}
\pi_{N}: \cH(L(N))\to\cH(L), \qquad
\pi_{N}: \left(\begin{smallmatrix}\omega\\\mathfrak{Z}\\\tau\end{smallmatrix}\right)\mapsto
\left(\begin{smallmatrix}{\omega}/{N}\\\mathfrak{Z}\\\tau\end{smallmatrix}\right).
\end{equation}
This map corresponds to the multiplication
$I_{N}\cdot {\rm{pr}}(Z)$ with $Z\in \cH(L(N))$
and
${\rm{pr}}(Z)\in \cD^\bullet(U\oplus U_1\oplus L(-N))$ where
$I_{N}=\rm{diag}(N^{-1}\Eins_2, \Eins_{n_0}, \Eins_2)$.
We prove the second claim. (The proof of the first one is similar.)

If $\varphi\in {J}_{k,L;t}(\chi)$
then
$$
\widetilde{\varphi}_{1/2}(Z)=\varphi(\tau,\ffZ)e^{\pi i\omega}
=\widetilde{\varphi}\circ \pi_{2t}(Z),\qquad Z\in \cH(L(2t))
$$
is a holomorphic function on $\cH(L(2t))$.
We add index $2t$ to $\{A\}$ and $h$ in order to indicate the  elements of the Jacobi groups
$\Gamma^J(L(2t))$. First we see that
$I_{2t}\{A\}_{2t}I_{2t}^{-1}= \{A\}$.
Therefore
$$
\widetilde{\varphi}_{1/2}|_k \{A\}_{2t}(Z)=\chi(A)\widetilde{\varphi}_{1/2}(Z).
$$
Second we have
$$
I_{2t}[x,y;r]_{2t}I_{2t}^{-1}=[x,y; \frac{r}{2t}]=[x,y;\frac{1}2(x,y)]\cdot
[0,0; \frac{r-t(x,y)}{2t}]\quad
\text{for }\ x,y\in L.
$$
Therefore
$$
\widetilde{\varphi}_{1/2}|_k [x,y;r]_{2t}(Z)=
\widetilde{\varphi}|_k ([x,y;\frac{1}2(x,y)]\cdot[0,0; \frac{r}{2t}-\frac{1}2(x,y)])(\pi_{2t}(Z))=
$$
$$
\chi([x,y;\frac{1}2(x,y)])e^{\pi i (r-t(x,y))}\widetilde{\varphi}_{1/2}(Z)
\qquad\text{where }\
r-t(x,y)\in \ZZ.
$$
It means that $\widetilde{\varphi}_{1/2}$ is an (extended) Jacobi form of
index $\frac{1}2$ with the same $\SL_2(\ZZ)$- and Heisenberg characters
with respect to the even  lattice $L(2t)$.
\end{proof}

In Definitions \ref{def-JF} and \ref{def-JF}' and in the  proof of the last proposition
we used two interpretations of Jacobi forms as a function on $\HH_1\times (L\otimes \CC)$
and on the tube domain $\cH(L)$.
For any $\tau=u+iv\in \HH_1$ and $\ffZ\in L\otimes \CC$
we can find $\omega=u_1+iv_1\in \HH_1$ such that
$2v_1v-(\Im(\ffZ),\Im(\ffZ))>0$ or, equivalently,
${}^t(\omega, {}^t\ffZ, \tau)\in \cH(L)$. In the next lemma
we fix an independent  part of this parameter $\omega$.
\begin{lemma}\label{lem-v-free}
Let $Z
={}^t(\omega, {}^t{\mathfrak{Z}}, \tau)\in\cH(L)$.
Then  the quantity
$$
\widetilde{v}(Z)=v_1-\frac{(\hbox{Im}(\ffZ),\hbox{Im}(\ffZ))}{2v}>0
$$
is invariant with  respect to the action of
the real Jacobi group $\Gamma^{J}(L\otimes \RR)$.
\end{lemma}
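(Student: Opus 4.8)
The plan is to rewrite $\widetilde{v}(Z)$ as a quotient of two quantities transforming by the \emph{same} automorphy factor under $\Gamma^J(L\otimes\RR)$, so that their ratio is genuinely invariant. First I would observe that, up to the factor $2v=2\Im(\tau)$, the numerator of $\widetilde v$ is exactly the quantity occurring in the definition of the tube domain: since $(\Im Z,\Im Z)_1=2\Im(\omega)\Im(\tau)-(\Im(\ffZ),\Im(\ffZ))$, one has
$$
\widetilde{v}(Z)=\frac{(\Im Z,\Im Z)_1}{2\,\Im(\tau)}.
$$
In particular $\widetilde{v}(Z)>0$ is immediate from the two defining inequalities of $\cH(L)$, namely $(\Im Z,\Im Z)_1>0$ and $\Im(\tau)>0$. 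It then remains to track how numerator and denominator transform separately.

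For the numerator I would pass through the orthogonal realization. A direct inspection of the components of $\mathrm{pr}(Z)={}^t(-\tfrac12(Z,Z)_1,\omega,\ffZ,\tau,1)$ in the basis $(e,e_1,\dots,f_1,f)$ shows that $(\mathrm{pr}(Z),\overline{\mathrm{pr}(Z)})=2(\Im Z,\Im Z)_1$; this is just the identity $(\cZ,\overline{\cZ})=2(\cY,\cY)$ specialized to $\cZ=\mathrm{pr}(Z)$. Now let $M\in\Gamma^J(L\otimes\RR)\subset\Orth^+(L_2\otimes\RR)$. As $M$ has real entries it commutes with complex conjugation and preserves the $\CC$-bilinear extension of $(\cdot,\cdot)$; together with the defining relation $M\cdot\mathrm{pr}(Z)=J(M,Z)\,\mathrm{pr}(M\langle Z\rangle)$ this gives
$$
(\mathrm{pr}(Z),\overline{\mathrm{pr}(Z)})=|J(M,Z)|^{2}\,(\mathrm{pr}(M\langle Z\rangle),\overline{\mathrm{pr}(M\langle Z\rangle)}).
$$
Hence the numerator is a relative invariant for the factor $|J|^{-2}$: one obtains $(\Im M\langle Z\rangle,\Im M\langle Z\rangle)_1=|J(M,Z)|^{-2}(\Im Z,\Im Z)_1$ for every $M$ in the Jacobi group.

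For the denominator I would check that $\Im(\tau)$ picks up the very same factor, and here it suffices to test the two families of generators of $\Gamma^J(L\otimes\RR)$. For $M=\{A\}$ with $A=\left(\smallmatrix a&b\\c&d\endsmallmatrix\right)$ the modular variable transforms by $\tau\mapsto(a\tau+b)/(c\tau+d)$, so $\Im(\tau)\mapsto|c\tau+d|^{-2}\Im(\tau)$, and $J(\{A\},Z)=c\tau+d$; thus $\Im(\tau)$ acquires exactly $|J|^{-2}$. For a Heisenberg element $M=[x,y;r]$ the $\tau$-component is unchanged and $J([x,y;r],Z)=1$, so again $\Im(\tau)\mapsto|J|^{-2}\Im(\tau)$ trivially. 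Dividing the two transformation laws, the factors $|J(M,Z)|^{\pm2}$ cancel and $\widetilde{v}(M\langle Z\rangle)=\widetilde{v}(Z)$ on each generator; since these generate the group, invariance holds on all of $\Gamma^J(L\otimes\RR)$.

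The real content sits in the numerator, and the reason for routing it through the orthogonal group is precisely to sidestep the messy direct computation of $(\Im M\langle Z\rangle,\Im M\langle Z\rangle)_1$ under $\{A\}$, where the complex-bilinear term $(\ffZ,\ffZ)$ entering the $\omega$-component of $\{A\}\langle Z\rangle$ would have to be split into real and imaginary parts by hand. As a sanity check one can still verify the Heisenberg case directly: it collapses cleanly because $\Im(\ffZ+x\tau+y)=\Im(\ffZ)+x\,\Im(\tau)$ for real $x,y$ and the resulting cross terms cancel, leaving $(\Im Z,\Im Z)_1$ and $\Im(\tau)$ both unchanged. This is reassuring, but the orthogonal argument disposes of both generator types uniformly, which is the step I expect to be the main obstacle if attempted by brute force.
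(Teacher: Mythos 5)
Your proof is correct and is essentially the paper's own argument: the authors likewise identify $4v\,\widetilde{v}(Z)=2(\Im Z,\Im Z)_1=(\mathrm{pr}(Z),\overline{\mathrm{pr}(Z)})$, invoke invariance of the pairing together with $M\cdot\mathrm{pr}(Z)=J(M,Z)\,\mathrm{pr}(M\langle Z\rangle)$ to extract the factor $|J(M,Z)|^{2}$, and then cancel it against the transformation of $v=\Im(\tau)$. The only difference is presentational: you verify the relation $\Im(\tau)\mapsto|J(M,Z)|^{-2}\Im(\tau)$ explicitly on the two families of generators, a step the paper absorbs (somewhat implicitly, and only valid for Jacobi-group elements rather than all of $\Orth^+(L_2\otimes\RR)$) into the last equality of its chain.
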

\begin{proof}
For any $Z=X+iY\in\cH(L)$ we consider its image
$\cZ=\cX+i\cY=[{\rm pr}(Z)]=[{}^t(-\frac{1}2 (Z,Z)_1, {}^tZ, 1)]$
in the projective homogeneous domain $\cD(L)$ with $L=2U\oplus L(-1)$.
For any $M\in \Orth^+(L)$ we have
$$
(M\cZ, M\overline{\cZ})=(\cZ, \overline{\cZ})=2(\cY,\cY)_1=
2(2v_1\cdot v -(\hbox{Im}(\ffZ), \hbox{Im}(\ffZ)))=4v\cdot \widetilde v(Z).
$$
By the definition of the action of the group $\Orth^+(L)$ on the tube domain
we have
$$
4v\cdot \widetilde v(\cZ)=
2(\cY,\cY)_1=(M\cZ, M\overline{\cZ})=J(M,\cZ)\cdot \overline{J(M, \cZ)}
(M\latt{\cZ}, M\latt{\overline{\cZ}})=
$$
$$
2|J(M,\cZ)|^2 \bigl(\cY(M\latt{\cZ}), \cY(M\latt{\cZ})\bigr)_1=
4|J(M,\cZ)|^2 v(M\latt{\cZ})\cdot \widetilde{v}(M\latt{\cZ})=
$$
$$
4v\cdot \widetilde{v}(M\latt{\cZ}).
$$
\end{proof}

\noindent
{\bf Remark.} The quantity $\widetilde{v}(Z)\in \RR_{>0}$ is a free part of the variable
$Z$
in the extended Jacobi form $\widetilde{\varphi}(Z)$:
$$
(\tau,{}^t\ffZ) \mapsto
{}^t(x_1+i(\widetilde{v}+ \frac{(\hbox{Im}(\ffZ),\hbox{Im}(\ffZ))}{2v}),\, {}^t\ffZ,\ \tau )
\in \cH(L).
$$

The Jacobi forms with respect to a lattice $L$ form a bigraded ring
$J_{*, L;*}$ with respect to weights and indexes.
In the next proposition  we define a direct
(or tensor) product of two Jacobi forms. Its proof follows directly from the definition.
\begin{proposition}\label{pr-prodJ}
Let $\varphi_1\in J_{k_1,L_1;t}(\chi_1\times \nu_1)$
and
$\varphi_2\in J_{k_2,L_2;t}(\chi_2\times \nu_2)$ two Jacobi forms of the same index $t$.
Then
$$
\varphi_1\otimes \varphi_2:=\varphi_1(\tau, \ffZ_1)\cdot\varphi_2(\tau, \ffZ_2)
\in J_{k_1+k_2,L_1\oplus L_2;t}(\chi_1\chi_2\times \nu_1\nu_2)
$$
where $\nu_1\nu_2$ is a character of the group $H(L_1\oplus L_2)$ defined by
$$
(\nu_1\nu_2)([x,y;r])=\nu_1([x_1,y_1; \tfrac{1}{2}(x_1,y_1)])
\nu_2([x_2,y_2;\tfrac{1}{2}(x_2,y_2)])e^{i\pi t((x_1,y_1)+(x_2,y_2)+2r)}
$$
for $[x,y;r]=[x_1\oplus x_2,y_1\oplus y_2;r]\in H(L_1\oplus L_2)$.
The tensor product of two Jacobi forms is a cusp form
if at least one of them is a cusp form.
\end{proposition}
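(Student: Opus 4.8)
The plan is to verify directly that $\varphi_1\otimes\varphi_2$ satisfies the three defining conditions of a Jacobi form on $L_1\oplus L_2$ of weight $k_1+k_2$ and index $t$: the two functional equations \eqref{jacobi-A} and \eqref{jacobi-H}, and the holomorphy-at-infinity condition \eqref{F-exp-phi}. The decisive structural fact throughout is that the bilinear form on $L_1\oplus L_2$ is the orthogonal direct sum, so that for $\ffZ=\ffZ_1\oplus\ffZ_2$, $x=x_1\oplus x_2$, $y=y_1\oplus y_2$ one has $(\ffZ,\ffZ)=(\ffZ_1,\ffZ_1)+(\ffZ_2,\ffZ_2)$, $(x,x)=(x_1,x_1)+(x_2,x_2)$, $(x,\ffZ)=(x_1,\ffZ_1)+(x_2,\ffZ_2)$ and $(x,y)=(x_1,y_1)+(x_2,y_2)$. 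Each transformation of $\ffZ$ under the Jacobi group respects this splitting (the $\SL_2(\ZZ)$-action scales both components by $(c\tau+d)^{-1}$, while the Heisenberg action translates $\ffZ_i$ by $x_i\tau+y_i$), so I can transform each factor $\varphi_i$ separately and multiply the results.

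For the $\SL_2(\ZZ)$-equation \eqref{jacobi-A} I would apply it to each $\varphi_i(\tau,\ffZ_i/(c\tau+d))$; the automorphic factors combine to $(c\tau+d)^{k_1+k_2}$, the exponential factors combine to $e^{i\pi t\,c(\ffZ,\ffZ)/(c\tau+d)}$ by additivity of the norm, and the $\SL_2(\ZZ)$-characters combine to $\chi_1(A)\chi_2(A)=v_\eta^{D_1+D_2}(A)$. This is exactly \eqref{jacobi-A} for index $t$ on $L_1\oplus L_2$. For the Heisenberg equation \eqref{jacobi-H} applied to $x=x_1\oplus x_2$, $y=y_1\oplus y_2$, the exponential factors likewise combine to $e^{-i\pi t((x,x)\tau+2(x,\ffZ))}$, and one is left with the character product $\nu_1([x_1,y_1;\tfrac12(x_1,y_1)])\,\nu_2([x_2,y_2;\tfrac12(x_2,y_2)])$.

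The only step requiring a small argument rather than pure bookkeeping is to check that this product of Heisenberg characters equals $(\nu_1\nu_2)([x,y;\tfrac12(x,y)])$ for the character $\nu_1\nu_2$ defined in the statement; this is the point I expect to be the crux. Evaluating the definition at $r=\tfrac12(x,y)=\tfrac12((x_1,y_1)+(x_2,y_2))$ produces the extra factor $e^{i\pi t((x_1,y_1)+(x_2,y_2)+2r)}=e^{2\pi i t((x_1,y_1)+(x_2,y_2))}$, which is trivial because $t(x_i,y_i)\in\ZZ$ for each $i$ (recall that $t(x,y)\in\ZZ$ for $x,y\in L_i$ whenever $\varphi_i\not\equiv 0$). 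Hence the two characters agree and \eqref{jacobi-H} holds. Finally, multiplying the Fourier expansions \eqref{F-exp-phi} of $\varphi_1$ and $\varphi_2$ and setting $n=n_1+n_2$, $l=l_1\oplus l_2\in\tfrac12(L_1\oplus L_2)^\vee$ yields an expansion of the required type: the congruence $n\equiv(D_1+D_2)/24\bmod\ZZ$ holds termwise, and $2nt-(l,l)=(2n_1t-(l_1,l_1))+(2n_2t-(l_2,l_2))\geq 0$ by additivity, so $\varphi_1\otimes\varphi_2$ is holomorphic at infinity. The same additivity shows that if one factor is a cusp form (so one summand is strictly positive) then every nonzero Fourier coefficient of the product has $2nt-(l,l)>0$, which gives the cusp statement in view of \eqref{ord-phi}.
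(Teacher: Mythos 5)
Your proof is correct, and it coincides with the paper's (implicit) argument: the paper gives no written proof beyond the remark that the proposition ``follows directly from the definition,'' and your verification of \eqref{jacobi-A}, \eqref{jacobi-H} and the Fourier expansion condition \eqref{F-exp-phi} is precisely that routine check. You also correctly isolated the one non-trivial point, namely that the extra factor $e^{i\pi t((x_1,y_1)+(x_2,y_2)+2r)}$ in the definition of $\nu_1\nu_2$ becomes $e^{2\pi i t((x_1,y_1)+(x_2,y_2))}=1$ at $r=\tfrac{1}{2}(x,y)$ because $t(x_i,y_i)\in\ZZ$, so the stated character is consistent with the product of the two Heisenberg transformation laws.
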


It is known (see \cite{G1}) that the space $J_{k,L;t}(\chi)$ is trivial
if $k< \frac{n_0}2$ where $\rank L=n_0$.
 The minimal possible weight $k=\frac{n_0}2$ is called
{\it singular weight}.
For any Jacobi form $\varphi$ of singular weight the hyperbolic norm
of the index of a non-zero Fourier coefficient $f(n,l)$
(see \eqref{F-exp-phi})
is equal to zero: $2nt-(l,l)=0$,
i.e. ${\rm Ord} (\varphi)=0$.
\smallskip

\noindent
{\bf Example 1.6.} {\it The Jacobi theta-series}.
The Jacobi theta-series of characteristic $(\frac{1}2, \frac{1}2)$
is  defined by
\begin{equation}\label{theta}
\vartheta(\tau,z)=\sum_{n\in\ZZ}\left(\frac{-4}{n}\right)q^{\frac{n^2}{8}}
r^{\frac{n}{2}}
=-q^{1/8}r^{-1/2}\prod_{n\geqslant 1}\,(1-q^{n-1} r)(1-q^n r^{-1})(1-q^n)
\end{equation}
where $q=e^{2\pi i \tau}$, $\tau\in \HH_1$ and $r=e^{2\pi i z}$, $z\in \CC$.
This is the simplest example of Jacobi form of half-integral index.
The theta-series $\vartheta$  satisfies two functional equations
$$
\vartheta(\tau, z+x \tau+y)
=(-1)^{x +y}e^{-\pi i (x^2\tau+2x z)}\vartheta(\tau, z),
\quad (x, y) \in \ZZ^2
$$
and
$$
\vartheta(A\latt{\tau}, z)=
v_\eta^3(A)(c\tau+d)^{\frac 1{2}}e^{\pi i \frac{cz^2}{c\tau + d}}\vartheta(\tau, z),
\quad
A=\left(\smallmatrix a&b\\ c&d \endsmallmatrix\right)\in \SL_2(\ZZ)
$$
where $v_\eta$ is the multiplier system of the Dedekind $\eta$-function.
Using our notations, $A_1 \simeq \latt{2}$, we have
$$
\vartheta(\tau,z)\in  J_{\frac{1}{2},A_1;\frac{1}{2}}(v_{\eta}^{3}\times v_{H})
$$
where, for short, $v_{H(A_1)}=v_H$ is defined by:
$$
v_H([x,y;r])=(-1)^{x+y+xy+r},\qquad [x,y;r]\in H(\ZZ),\quad
x,y,r\in \ZZ.
$$
The Jacobi theta-series $\vartheta$  is the Jacobi form of singular weight
$\frac{1}2$  with a non-trivial character of the Heisenberg group.
This Jacobi form was not mentioned in \cite{EZ} but it plays an important
role in the construction of the basic Jacobi forms
and reflective modular forms (see \cite{GN3}, \cite{G-EG}, \cite{G10}).
We remind that
$$
\div \vartheta(\tau, z) = \{h\latt{z=0}\,|\, h\in H(\ZZ)\}=
\{z=x\tau+y\,|\, x ,y \in \ZZ\}.
$$
The Jacobi theta-series $\vartheta$ having the triple product formula
\eqref{theta} will be the first main function in our construction of
Jacobi forms for orthogonal lattices.
\smallskip

\noindent
{\bf Example 1.7.} {\it The Jacobi forms of singular weight for $mA_1$}.
Using the Jacobi theta-series we can  construct Jacobi forms of singular weight for $mA_1=\latt{2}\oplus \dots \oplus \latt{2}$.
The tensor product of $m$ Jacobi theta-series is a Jacobi form of singular weight
and index $\frac{1}2$ for $mA_1$:
\begin{equation}\label{theta-Am}
\vartheta_{mA_1}(\tau, z_1,\dots,z_m)=
\prod_{1\leqslant j \leqslant m}\vartheta(\tau,z_j)
\in J_{\frac{m}{2},mA_1;\frac{1}{2}}(v_{\eta}^{3m}\times v_H^{\otimes m})
\end{equation}
where
$$
v_H^{\otimes m}([x,y;r])=v_{H(mA_1)}([x,y;r])=(-1)^{r+\sum_{j=1}^{m}x_j+y_j+x_jy_j}
$$
for any $x_j$, $y_j$, $r$ in $\ZZ$. For even $m$ we can construct
Jacobi forms of singular weight and index $1$ because
\begin{equation}\label{theta-D2}
\vartheta_{2A_1}^{(1)}(\tau, z_1,z_2)=\vartheta(\tau,z_1+z_2)\cdot
\vartheta(\tau,z_1-z_2)
\in J_{1,2A_1}(v_{\eta}^{6}).
\end{equation}
Taking different orthogonal decompositions of the lattices $8A_1$
we obtain $105$ Jacobi forms of weight $4$  and index $1$
with trivial character.
\smallskip

\noindent
{\bf Example 1.8.} {\it The Jacobi forms of singular weight for $D_m$}.
We recall the definition of the  even quadratic lattice $D_m$.
(In this paper we denote by $A_m$, $D_m$, $E_m$ the lattices
generated by the corresponding root systems).
We use the standard Euclidian  basis $\latt {e_i}_{i=1}^m$ ($(e_i,e_j)=\delta_{ij}$)
in $\ \ZZ^m$.
Then
$$
D_m=\{(x_1,\dots,x_m)\in \ZZ^m\,|\, x_1+\dots+x_m\in 2\ZZ\}\quad
(m\geqslant1)
$$
is the maximal even sublattice in $\ZZ^m$.
The theta-product \eqref{theta-Am} is a Jacobi form of index $1$ for
$D_m$ with trivial Heisenberg character because the quadratic form
$x_1^2+\dots+x_m^2$ is even on $D_m$
\begin{equation}\label{theta-Dm}
\vartheta_{D_m}(\tau,\mathfrak{Z}_m)=
\vartheta(\tau,z_1)\cdot \ldots \cdot \vartheta(\tau,z_m)
\in J_{\frac{m}{2},D_m}(v_{\eta}^{3m}).
\end{equation}
We note that $D_2\cong A_2$ and
$\vartheta_{D_2}=\vartheta_{2A_1}^{(1)}$.
For the lattice $D_4$ we can give two more examples:
\begin{multline}\label{theta-D4a}
\vartheta_{D_4}^{(2)}(\tau,\mathfrak{Z}_4)=
\vartheta(\tau,\frac{-z_1+z_2+z_3+z_4}2)
\vartheta(\tau,\frac{z_1-z_2+z_3+z_4}2)\\
\vartheta(\tau,\frac{z_1+z_2-z_3+z_4}2)
\vartheta(\tau,\frac{z_1+z_2+z_3-z_4}2)
\in J_{2,D_4}(v_{\eta}^{12})
\end{multline}
and
\begin{multline}\label{theta-D4b}
\vartheta_{D_4}^{(3)}(\tau,\mathfrak{Z}_4)=
\vartheta(\tau,\frac{z_1+z_2+z_3+z_4}{2})
\vartheta(\tau,\frac{z_1+z_2-z_3-z_4}{2})\\
\vartheta(\tau,\frac{z_1-z_2-z_3+z_4}{2})
\vartheta(\tau,\frac{z_1-z_2+z_3-z_4}{2})
\in J_{2,D_4}(v_{\eta}^{12}).
\end{multline}
Analyzing the divisors of the Jacobi forms we obtain
the relation
$$
\vartheta_{D_4}(\tau,\mathfrak{Z}_4)=
\vartheta_{D_4}^{(2)}(\tau,\mathfrak{Z}_4)+
\vartheta_{D_4}^{(3)}(\tau,\mathfrak{Z}_4).
$$

{\bf Jacobi forms and the  Weil representation.}
The Jacobi forms  can be considered
as vector valued $SL_2(\ZZ)$-modular forms (see \cite{B3},
\cite{KP}, \cite{G-K3}, \cite{Sch}, \cite{Sk}) related to the Weil
representation.
To compare the examples considered above with vector-valued
modular forms we recall the definitions from \cite{G-K3}
for Jacobi form of index one.

Let $L$ an even positive definite lattice of rank $n_0$ and
$$
\varphi(\tau,\mathfrak{Z})=
\sum_
{\substack{n\in \ZZ,\ l\in L^\vee
\\ \vspace{0.5\jot} 2n-(l,l)\geqslant 0}}
f(n,l)e^{2i\pi({n}\tau+(l,\mathfrak{Z}))}
\in J_{k,L}
$$
a Jacobi form of weight $k$ and index one.
By  $q=q(L)$ we denote  the level of the lattice $L$,
i.e. the smallest integer such that  $L^\vee(q)$ is an even lattice.
Then we have the following  representation
(see \cite[Lemma 2.3]{G-K3} with $m=1$)
$$
\varphi(\tau,\mathfrak{Z})=
\sum_{\mu\in D(L)}
\phi_\mu(\tau)\theta^L_{\mu}(\tau,\mathfrak{Z})
$$
where
$$
\phi_\mu(\tau)=
\sum_
{\substack{ r\geqslant0\\ \vspace{0.5\jot}
\frac{2r}q \equiv -(\mu,\mu)\,{\rm mod}\, 2\ZZ}}
f_h(r) \exp{(2\pi i\, \frac{r}{q}\tau)},
\qquad
f_\mu(r)=f(\tfrac{2r+(\mu,\mu)}{2q},\mu)
$$
and
$$
\theta^L_{\mu}(\tau,\mathfrak{Z})=\sum_{l\in \mu+L}
e^{i\pi m((l,l)\tau+2(l,\mathfrak{Z}))}
$$
is  the theta-series  with characteristic $\mu\in D(L)=L^\vee/L$
where $L^\vee$ is the dual lattice.
For any matrix
$M=(\smallmatrix a&b\\c&d\endsmallmatrix)\in SL_2(\ZZ)$
the theta-vector
$
\Theta_L(\tau,\mathfrak{Z})
=\bigl(\theta^L_{\mu}(\tau,\mathfrak{Z})\bigr)_{\mu\in D(L)}
$
has the following transformation property
$$
\Theta_L
(\frac{a\tau+b}{c\tau+d},\frac{\mathfrak{Z}}{c\tau+d})
=(c\tau+d)^{\frac {n_0}2}
U(M)
\exp(\frac{\pi i c(\mathfrak{Z}, \mathfrak{Z})}{c\tau+d})
\,\Theta_L(\tau,\mathfrak{Z})
$$
where
$U(M)$ is a unitary   matrix.  In particular,
for
$T=\left(\smallmatrix 1&1\\0&1\endsmallmatrix\right)$ and
$S=\left(\smallmatrix 0&-1\\1&\ \,0\endsmallmatrix\right)$ we have
$$
U(T)={\rm diag\,}(e^{i\pi (\mu,\mu)})_{\mu\in D(L)},
\quad
U(S)=(-i)^{\frac{n_0}{2}}(\sqrt{|D(L)|})^{-\frac{1}2}
\bigl(
e^{-2i\pi(\mu,\nu)}\bigr)_{\mu,\nu\in D(L)}.
$$
Therefore  $\Phi(\tau)=(\phi_\mu(\tau))_{\mu\in D(L)}$
is a holomorphic vector-valued modular form of weight
$k-\frac{n_0}2$ for the conjugated representation $\overline{U}(M)$
of $SL_2(\ZZ)$. In particular, the weight of any holomorphic
Jacobi  form  is greater or equal to
$\frac{n_0}2$ (see \cite{G1}).
We note also that the  theta-series
$\theta^L_{\mu}$  are linear independent
and $\Theta_L$
is invariant with respect to the action
of  the stable orthogonal group $\widetilde{\Orth}(L)$
(see Theorem 2.2 below)
and, in particular, with respect to the Weyl group
of the lattice $L$, $W_{2}(L)$,  which is a subgroup
of $\widetilde{\Orth}(L)$ generated by $2$-reflections
in the lattice $L$.

We get the simplest example for an even unimodular
lattice $N$ of rank $n_0$
 (see \cite{G1} and \cite[Lemma 4.1]{G-K3})
 \begin{equation}\label{J-unimodular}
 \Theta_N(\tau, \mathfrak{Z})=
 \sum_{l\in N}
 e^{\pi i (l,l)\tau+ 2\pi i (l, \mathfrak{Z})}
 \in J_{\frac{n_0}2, N}.
 \end{equation}
Moreover we have that two linear spaces of Jacobi forms are
isomorphic
$$
	{J}_{k_1, L_1}\cong{J}_{k_1+\frac{n_2-n_1}2, L_2}
$$
if $L_1$ (rank\,$L_1=n_1$)  and  $L_2$ (rank\,$L_2=n_2$)
are two lattices with isomorphic discriminant forms
(see \cite[Lemma 2.4]{G-K3}).
\smallskip

\noindent
{\bf Example 1.9.}  {\it The Weil representation for $D_m$}.
We recall that  $|D_{m}^\vee/D_{m}|=4$ and
$$
D_m^\vee/D_m=\{\mu_i,\ i\bmod 4\}=
\{0,\frac{1}2(e_1+\dots+e_m),\,  e_1,\,  \frac{1}2(e_1+\dots
+e_{m-1}-e_m)\ \bmod D_m\}
$$
is the cyclic group of order $4$ generated by
$\mu_1=\frac{1}2(e_1+\dots+e_m)\bmod D_m$, if $m$ is odd,
and the product of two groups of order $2$, if $m$ is even.
We have the following  matrix
of inner products in the discriminant group of $D_m$
of the non-trivial classes modulo $D_m$
$$
\bigl((\mu_i,\mu_j)\bigr)_{i,j\ne 0}=
\left(\begin{matrix}
\frac{m}4&\frac{1}2&\frac{m-2}4\\
\frac{1}2&1&\frac{1}2\\
\frac{m-2}4&\frac{1}2&\frac{m}4
\end{matrix}\right)\qquad  (\mu_i\in D_m^\vee/D_m)
$$
where the diagonal elements are taken modulo $2\ZZ$
and the non-diagonal elements are taken modulo $\ZZ$.
We note that the discriminant group of $D_m$ depends only on $m\bmod 8$.
This gives the formula for $U(T)$ and $U(S)$.

1) For $m\equiv 4\bmod 8$, we have
$$
U(T)={\rm diag\,}(1,-1,-1,-1),\quad
U(S)=-\frac{1}{2}\left(\begin{smallmatrix}
1&\ \,1&\ \,1&\ \,1\\
1&\ \,1&-1 &-1\\
1&-1  &\ \,1&-1\\
1&-1  &-1&\ \,1
\end{smallmatrix}\right).
$$
We put $\theta^{D_m}_{i}(\tau,\mathfrak{Z_m})
:=\theta^{D_m}_{\mu_i}(\tau,\mathfrak{Z_m})$ for  $i\bmod 4$.
The matrices  $U(T)$ and $U(S)$ have the  three common eigenvectors:
$$
\theta^{D_m}_{1}-\theta^{D_m}_{3},
\quad \theta^{D_m}_{1}-\theta^{D_m}_{2},
\quad \theta^{D_m}_{2}-\theta^{D_m}_{3}
\qquad (m\equiv 4\bmod 8).
$$
If $m=4$ we get the  Jacobi forms
$\vartheta_{D_4}$, $\vartheta_{D_4}^{(1)}$
and $\vartheta_{D_4}^{(2)}$ obtained above as theta-products.

2) For $m\equiv 0\bmod 8$, we have
$$
U(T)={\rm{diag}}(1,1,-1,1),\quad
U(S)=
\frac{1}{2}\left(\begin{smallmatrix}
1 & \ \,1 &\ \, 1 & \ \,1 \\
1 & \ \,1 & -1 & -1 \\
1 & -1 & \ \,1 & -1\\
1 & -1 & -1 & \ \,1
\end{smallmatrix}\right).
$$
These lattices have again two linear independent  common eigenvectors.
The first one is the theta-product
$\vartheta_{D_m}=\theta^{D_m}_{1}-\theta^{D_m}_{3}$.
The second eigenvector
$\vartheta_{D_m^+}=\theta^{D_m}_{0}+\theta^{D_m}_{1}$
is equal to the Jacobi theta-series of  the unimodular lattice
$D_m^+=\latt{D_m,\, \mu_1}$. In particular, $D_8^+=E_8$
and $\vartheta_{D_m^+}=\Theta_{E_8}$ (see \eqref{J-unimodular}).
To understand better the role of the Jacobi theta-series
$\vartheta$ we consider one more case.

3) For  $m\equiv 1 \bmod 8$, we have
$$
U(T)={\rm{diag}}(1,e^{i\frac{\pi}{4}},-1,e^{i\frac{\pi}{4}}), \quad
U(S)=\frac{1}{2}e^{-i\frac{\pi}{4}}
\left(\begin{smallmatrix}
1 & \ \,1 & \ \,1 & \ \,1 \\
1 & -i & -1 & \ \,i\\
1 & -1 & \ \,1 & -1\\
1 & \ \,i  & -1 & -i
\end{smallmatrix}\right)
$$
and  $\vartheta_{D_m}=\theta^{D_m}_{1}-\theta^{D_m}_{3}$
is the only Jacobi form of singular weight.
Moreover, for $m=1$ we get
$$
\vartheta_{D_1}(\tau,z)
\in J_{\frac{1}{2},\latt{4};1}(v_{\eta}^3)=J_{\frac{1}{2},A_1;2}(v_{\eta}^3)
=J_{\frac{1}{2},2}(v_{\eta}^3).
$$
The last space is the space of classical Jacobi forms of weight $\frac{1}{2}$, index $2$  with the multiplier system $v_{\eta}^3$.
It is easy to check that
$\vartheta_{D_1}(\tau,0)=\vartheta_{D_1}(\tau,\frac{1}{2})=0$.
Therefore
$$
\vartheta_{D_1}(\tau,z)=\vartheta(\tau,2z).
$$

4) Analyzing $U(T)$ and $U(S)$ for all other $m$ modulo $8$
we get only one  common eigenvector corresponding to the theta-product
$\vartheta_{D_m}=\theta^{D_m}_{1}-\theta^{D_m}_{3}$.
Therefore Example 1.8 contains all possible Jacobi forms
of singular  weight (and index one) for $D_m$.
\smallskip

\noindent
{\bf Example 1.10.}  {\it The lattice  $E_6$}.
Let $E^\vee_6$ be the dual lattice of $E_6$ and  $D(E_6)$
its discriminant group. We have
$$
D(E_6)=E^\vee_6/E_6\simeq \ZZ/3\ZZ
\quad{\rm and }\quad
q_{D(E_6)}=-q_{D(A_2)}.
$$
The  discriminant group has the following
system of representatives (see \cite{Bou}, Planche V):
$D(E_6)=\left\{0,\mu, 2\mu\right\}$
where
$\mu^2\equiv \frac{4}3\mod 2\ZZ$.
We have
\[
U(T)={\rm{diag}}\left(1,\rho^2,\rho^2\right),\quad
U(S)=\frac{i}{\sqrt{3}}\left(\smallmatrix 1 &1 &1 \\ 1 & \rho^2 & \rho \\ 1 & \rho & \rho^2 \endsmallmatrix\right)
\]
with $\rho=e^{\frac{2i\pi}{3}}$.
We get
\[
\theta_{E_6}(\tau,\mathfrak{Z}_6)
=(\theta_{1}-\theta_{2})(\tau,\mathfrak{Z}_6)\in J_{3,E_6}(v_{\eta}^{16}).
\]
This Jacobi form is invariant with respect to the Weyl group
$W(E_6)$.

The simple construction of Jacobi forms using products of  Jacobi
theta-series   has a lot of advantages.
First, we get Jacobi forms of singular weight with
a very simple divisor.
Second, we can easily determine the maximal group
of symmetries with respect to the abelian variable.
This fact is important in the next section
in which we construct
modular forms of singular weight with respect to
orthogonal groups.
\smallskip

\noindent
{\bf Example 1.11}
{\it The Jacobi theta-series  $\vartheta_{3/2}$.}
We can get more examples using the second theta-series
of weight $1/2$ and index $3/2$ with respect
to the full modular group $SL_2(\ZZ)$. This function is  related
to twisted affine Lie algebras and is important in the construction
of basic reflective Siegel modular forms (see \cite{GN3})
\begin{equation}\label{theta3/2}
\vartheta_{3/2}(\tau,z)=\frac{\eta(\tau)\vartheta(\tau,2z)}{\vartheta(\tau,z)}
\in J_{\frac{1}2, A_1;\frac{3}2}(v_\eta\times v_H)=
J_{\frac{1}2, \latt{6};\frac{1}2}(v_\eta\times v_H)
\end{equation}
which is  given by the quintiple product formula
\begin{align*}\
\vartheta_{3/2}(\tau,z)&=\sum_{n\in\ZZ}\left(\frac{12}{n}\right)q^{\frac{n^2}{24}}
r^{\frac{n}{2}}=\\
{}&q^{\frac{1}{24}}r^{-\frac{1}{2}}
\prod_{n\geqslant 1}\,(1+q^{n-1} r)(1+q^{n} r^{-1})
(1-q^{2n-1} r^{2})(1-q^{2n-1} r^{-2})(1-q^n).
\end{align*}
We have
\begin{equation}\label{theta-mA3}
\vartheta_{mA_1(3)}(\tau, z_1,\dots,z_m)
=\prod_{1\leqslant j \leqslant m}\vartheta_{3/2}(\tau,z_j)
\in J_{\frac{m}{2},m\latt{6};\frac{1}{2}}(v_{\eta}^{m}\times v_H^{\otimes m})
\end{equation}
(we recall that $m\latt{6}=mA_1(3)$
denotes the orthogonal sum of $m$ copies
of the lattice $\latt{6}$ of rank one).
The same theta-product can be considered as a Jacobi form of index
$1$ for the lattice $D_m(3)$
and
\begin{equation}\label{theta-Dm3}
\vartheta_{D_m(3)}(\tau,\mathfrak{Z}_m)=
\vartheta_{3/2}(\tau,z_1)\cdot \ldots \cdot \vartheta_{3/2}(\tau,z_m)
\in J_{\frac{m}{2},D_m(3)}(v_{\eta}^{m})
\end{equation}
where $D_m(3)$ is the lattice $D_m$ renormalized by $3$.
In this simple way, we construct  examples of Jacobi forms
of singular weight with trivial character for even $n_0\geqslant8$:
$D_8$, $D_7\oplus D_3(3)$, $D_6\oplus D_6(3)$,
$D_5\oplus D_9(3)$ and so on (see Proposition \ref{Form-sing}).

\section{The lifting of Jacobi forms of half-integral index}

The lifting of the  Jacobi form $\vartheta_{D_8}$ (see \eqref{theta-Dm})
is a reflective modular form with respect to the orthogonal  group
$\Orth^+(2U\oplus D_8(-1))$ (see \cite{G10}) which is equal to the Borcherds--Enriques automorphic discriminant $\Phi_4$
of the moduli space of the Enriques surfaces introduced in
\cite{B2}. The lifting of the  Jacobi form
$$
\eta^9(\tau)\vartheta_{D_5}(\tau,\ffZ_5)\in J_{7,D_5}
$$
determined the unique canonical differential form on the modular variety
of the orthogonal group $\widetilde{\SO}^+(2U\oplus D_5(-1))$
having  Kodaira dimension $0$. In \cite{G10} there were found three
such modular varieties of dimensions $4$, $6$ and $7$.
The cusp form of  the modular variety of dimension $4$ is defined
by a Jacobi form of half-integral index with a character of order
$2$ (see Example 2.4 below).
In this  section we give a variant of the lifting
of  Jacobi forms of half-integral index with a character.
This theorem is a generalization of Theorem 3.1 in \cite{G-K3}
(the case of Jacobi forms of orthogonal type with trivial character) and
Theorem 1.12 in \cite{GN3} (the case of Siegel modular forms with respect
to a paramodular group of genus $2$).
All these constructions are particular cases of Borcherds additive lifting (see \cite[\S 14]{B3}) of vector valued modular forms.
Nevertheless  our  approach related to  Jacobi forms gives
in a natural way many new important examples
of reflective modular forms  for orthogonal groups.
Theorem \ref{thm-lift} is a necessary tool for this purpose.

We can define a Hecke operator which multiplies the index of  Jacobi
forms. This operator is similar to the operator $V_m$ of \cite{EZ}
or to the `minus'-Hecke operator introduced in \cite{G-K3}--\cite{G-Ab}
in the case of Siegel modular forms of arbitrary genus
or for the modular forms for orthogonal groups. We apply such operators
to elements of $J_{k,L;t}(v_{\eta}^{D}\times \nu)$ where $\nu$ is
a binary character of the minimal integral Heisenberg group
$H_s(L)$

\begin{proposition}\label{pr-hecke}
Let $\varphi\in J_{k,L;t}(v_\eta^D\times \nu)$ not identically zero.
We assume that   $k$ is integral,
$t$  is  rational and  $D$ is an even divisor of $24$.
If $Q=\frac{24}{D}$ is odd we assume that
the character of the minimal integral Heisenberg group
$\nu: H_s(L)\to \{\pm 1\}$ is  trivial.
Then for any natural $m$ coprime to $Q$ the function
$$
\varphi\vert_{k,t}T_-^{(Q)}(m)(\tau, \ffZ)=
\sum_{\substack{ad=m, \ a >0 \\ \vspace{0.5\jot} b \bmod d}}
a^{k}v_\eta^D(\sigma_a)
\varphi(\frac{a\tau+bQ}{d}, \,a\ffZ),
$$
where $\sigma_a\in\SL_2(\ZZ)$ such that
$\sigma_a\equiv\left(\begin{smallmatrix}a^{-1}& 0\\0& a\end{smallmatrix}\right)\bmod Q$,
belongs to $J_{k,L;mt}(v_{\eta,m}^D\times \nu)$.
The new  $\SL_2(\ZZ)$-character is defined as follows:
$$
v_{\eta,m}^D(A)=v_\eta^D(A_m)\qquad {\rm for \ all}\ A\in\SL_2(\ZZ)
$$
with $A_m\left(\begin{smallmatrix}1 & 0\\ 0 & m\end{smallmatrix}\right)\equiv
\left(\begin{smallmatrix}1 &0\\0&m\end{smallmatrix}\right)A\bmod Q$.
The character $v_{\eta,m}^D$ depends only on $m\bmod Q$.
\end{proposition}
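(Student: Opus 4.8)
The plan is to verify directly that $\psi:=\varphi\vert_{k,t}T_-^{(Q)}(m)$ is a holomorphic Jacobi form of weight $k$, index $mt$ and character $v_{\eta,m}^D\times\nu$ in the sense of Definition 1.2', i.e. that it satisfies the two functional equations \eqref{jacobi-A} and \eqref{jacobi-H} and has a Fourier expansion of type \eqref{F-exp-phi}. Each summand is, up to the constant $a^k v_\eta^D(\sigma_a)$, the action on $\tau$ of the upper triangular integral matrix $\left(\begin{smallmatrix}a&bQ\\0&d\end{smallmatrix}\right)$ of determinant $ad=m$ together with the rescaling $\ffZ\mapsto a\ffZ$; thus $T_-^{(Q)}(m)$ is a finite sum over the data $(a,d,b)$ with $ad=m$, $a>0$, $b\bmod d$, all of which are coprime to $Q$ since $\gcd(m,Q)=1$ (so that $a^{-1}\bmod Q$, $\sigma_a$ and $A_m$ make sense). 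Because $D$ is an even divisor of $24$, $v_\eta^D$ is a genuine character of $\SL_2(\ZZ)$ of order $Q=24/D$ which factors through $\SL_2(\ZZ/Q\ZZ)$, i.e. $v_\eta^D(A)$ depends only on $A\bmod Q$; this is the structural fact behind the shifts $bQ$ and the congruence conditions. The Heisenberg equation will come from a direct substitution in each summand, the $\SL_2(\ZZ)$-equation from the fact that right translation by $A\in\SL_2(\ZZ)$ permutes the representatives up to left $\SL_2(\ZZ)$-factors, and holomorphy from the Fourier expansion.

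First I would treat the Heisenberg equation \eqref{jacobi-H}, the more elementary of the two. Substituting $\ffZ\mapsto\ffZ+x\tau+y$ ($x,y\in L$) in the summand and writing $\tau'=(a\tau+bQ)/d$, the inner abelian argument $a\ffZ$ becomes $a\ffZ+(dx)\tau'+(ay-bQx)$, i.e. a Heisenberg shift of $\varphi$ by $x'=dx\in L$ and $y'=ay-bQx\in L$. Applying \eqref{jacobi-H} to $\varphi$ at index $t$ and using $(x',x')=d^2(x,x)$ and $(x',a\ffZ)=m(x,\ffZ)$, the $\ffZ$-linear part of the exponent is exactly the index-$mt$ factor $e^{-i\pi (mt)((x,x)\tau+2(x,\ffZ))}$, while the remaining $b$-dependent phase $e^{-i\pi t\, dbQ(x,x)}$ together with $\nu([x',y';\tfrac12(x',y')])$ must collapse to the target value $\nu([x,y;\tfrac12(x,y)])=e^{\pi i (mt)((x,x)+(y,y))}$. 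Using the explicit formula for $\nu$ from Proposition \ref{pr-char} and the integrality $t(x,x),t(y,y),t(x,y)\in\ZZ$, one checks that the difference of exponents lies in $2\ZZ$: either $m$ is odd — which is forced when $Q$ is even by $\gcd(m,Q)=1$, so that $d-a$ is even and $bQ$ is even — or $\nu$ is trivial, which is exactly the hypothesis when $Q$ is odd, so that $t(x,x),t(y,y)\in2\ZZ$. Since the resulting factor is independent of the representative $(a,d,b)$, every summand acquires the same factor and $\psi$ transforms as required; this is precisely where the coprimality and parity hypotheses are used.

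The main obstacle is the $\SL_2(\ZZ)$-equation \eqref{jacobi-A}, where one must track the $\eta^{D}$-multiplier. For $A\in\SL_2(\ZZ)$ I would right-multiply each representative $\left(\begin{smallmatrix}a&bQ\\0&d\end{smallmatrix}\right)$ by $A$ and factor the product as $\gamma\cdot\left(\begin{smallmatrix}a'&b'Q\\0&d'\end{smallmatrix}\right)$ with $\gamma\in\SL_2(\ZZ)$, so that right translation permutes the representatives; the accumulated multipliers $v_\eta^D(\gamma)$ together with the constants $a^k v_\eta^D(\sigma_a)$ and the weight-$k$ automorphy factors must reassemble into $v_{\eta,m}^D(A)(c\tau+d)^k e^{i\pi (mt)c(\ffZ,\ffZ)/(c\tau+d)}$. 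The crux — and the real content of the proof — is the $\eta^{D}$-multiplier identity: the congruences $\sigma_a\equiv\diag(a^{-1},a)\bmod Q$ and $A_m\left(\begin{smallmatrix}1&0\\0&m\end{smallmatrix}\right)\equiv\left(\begin{smallmatrix}1&0\\0&m\end{smallmatrix}\right)A\bmod Q$ are exactly the compatibility needed so that, since $v_\eta^D$ depends only on residues $\bmod Q$, the multipliers of the scaling matrices cancel and the telescoping sum produces the well-defined character $v_{\eta,m}^D$. Finally, holomorphy at infinity follows by inserting the Fourier expansion \eqref{F-exp-phi} of $\varphi$ into each summand: the substitution $\tau\mapsto(a\tau+bQ)/d$, $\ffZ\mapsto a\ffZ$ rescales the indices so that the hyperbolic norm condition $2n'(mt)-(l',l')\geqslant0$ is preserved, the summation over $b\bmod d$ selects the admissible Fourier classes, and the congruence $n\equiv D/24\bmod\ZZ$ matches the twisted character $v_{\eta,m}^D$, yielding an expansion of the required type.
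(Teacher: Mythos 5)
Your proposal is correct and follows essentially the same route as the paper's proof: a summand-by-summand verification in which the Heisenberg law \eqref{jacobi-H} reduces to a parity/integrality computation exploiting $\gcd(m,Q)=1$ (so $m$, $a$, $d$ are odd when $Q$ is even) or the triviality of $\nu$ (when $Q$ is odd), the $\SL_2(\ZZ)$-law \eqref{jacobi-A} follows from permuting the coset representatives using $\Gamma(Q)\subset\Ker v_\eta^D$, the normality of $\Gamma(Q)$, and the defining congruences for $\sigma_a$ and $A_m$, and holomorphy is read off from the transformed Fourier expansion. The only difference is presentational: the paper runs the same computation on the tube domain, treating $T_-^{(Q)}(m)$ as an element of the Hecke ring of the parabolic subgroup $\Gamma^J(Q)$ and phrasing the Heisenberg step as the conjugation $\{\sigma_a\}(M_{a,b,d}\cdot h)\{\sigma_a^{-1}\}$ (deferring the Fourier-expansion check to the proof of Theorem \ref{thm-lift}), whereas you substitute directly into the functional equations of Definition 1.2'.
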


\begin{proof}
It is known that $\Ker v_\eta^D$
contains the principle congruence subgroup $\Gamma(Q)<\SL_2(\ZZ)$
(see \cite[Lemma 1.2]{GN3}).
We consider the following subgroup
$
\Gamma^J(Q)\simeq\Gamma(Q)\rtimes \Ker(\nu)
$
of the Jacobi group. We identify it with
the  corresponding parabolic subgroup in the orthogonal group
$\Orth^{+}(2U\oplus L(-1))$.
For $(m,Q)=1$, let
$$
T^{(Q)}(m)=\Gamma(Q)\sum_{\substack{ad=m, \ a >0 \\b \bmod d}}
\sigma_a \begin{pmatrix}a & bQ\\0& d\end{pmatrix}
$$
be the usual Hecke operator for $\Gamma(Q)$.
We associate to the element $T^{(Q)}(m)$ the element $T_-^{(Q)}(m)$
of the Hecke ring of the parabolic subgroup (see \cite{G-K3} and \cite{GN3})
$$
T_-^{(Q)}(m)=\Gamma^J(Q)\sum_{\substack{ad=m, \ a >0 \\b \bmod d}}  \{\sigma_a\}M_{a,b,d}
$$
where
$M_{a,b,d}=\diag\bigl(
\begin{pmatrix}  a&-bQ\\0&d
\end{pmatrix},\  \Eins_{n_0},\
m^{-1}\begin{pmatrix}  a& bQ\\0&d
\end{pmatrix}
\bigr)$.
This is a sum of some double cosets with respect to $\Gamma^J(Q)$.
We consider the extended Jacobi form
$\widetilde{\varphi}(Z)=\varphi(\tau,\mathfrak{Z})e^{2i\pi t\omega}$
with $Z={}^t(\omega, {}^t\ffZ,\tau) \in\cH(L)$
which is modular with respect to the parabolic subgroup.
Then we have
$$
\widetilde{\psi}(Z)=(\widetilde{\varphi}\vert_k T_-^{(Q)}(m))(Z)=
\sum_{\substack{ad=m, \ a>0\\b \bmod d}}(\widetilde{\varphi}
\vert_k \{\sigma_a\}M_{a,b,d})(Z).
$$
By definition, we have
$$
(\widetilde{\varphi}\vert_k \{\sigma_a\} M_{a,b,d})(Z)
=a^k v_{\eta}^D(\sigma_a)\varphi(\frac{a\tau+bQ}{d},a\mathfrak{Z})e^{2i\pi mt\omega}.
$$
Therefore the Hecke operator of the proposition corresponds to
the Hecke operator $T_-^{(Q)}(m)$ of the parabolic subgroup $\Gamma^J(Q)$
acting on the modular forms with respect to the parabolic subgroup $\Gamma^J(Q)$.
We remark that the new index of the extended function on $\cH(L)$
is equal to $mt$.
The case of  modular transformations is similar to the theory of usual  Hecke
operators (see \cite{Sh}).
If $A\in\SL_2(\ZZ)$, then  somewhat lenghty but easy calculations give us
$$
\widetilde{\psi}\vert_k \{A\}
=\sum_{\substack{a'd'=m,\ a'>0\\b' \bmod d'}}
(\widetilde{\varphi}\vert_k\{A_m\})\vert_k \{\sigma_{a'}\}M_{a',b',d'}
=v_{\eta}^D(A_m)\widetilde{\psi}.
$$
This is due to the fact that the group
$\Gamma(Q)$ is normal in $\SL_2(\ZZ)$ and then
$$
\Gamma^J(Q)\{A_m\}^{-1}\{\sigma_a\}M_{a,b,d}\{A\}
\neq \Gamma^J(Q)\{A_m\}^{-1}\{\sigma_{a'}\}M_{a',b',d'}\{A\}
$$
for distinct $a$ and $a'$ prime to $Q$.
Secondly we consider the abelian transformations.
Let $h=[x,y;r]\in H_s(L)$. Then we have
$$
\widetilde{\psi}\vert_k h
=\sum_{\substack{ad=m, \ a>0\\b \bmod d}}\nu (h'_{a,b,d})\widetilde{\varphi}
\vert_k\{\sigma_a\}M_{a,b,d}
$$
where
$h'_{a,b,d}=\left\{\sigma_a\right\}(M_{a,b,d}\cdot h)\left\{\sigma_a^{-1}\right\}=[x',y';r']$ and
$$
[x',y';r']
=[(\delta d+\gamma bQ)x-a\gamma y, -(\beta d+\alpha bQ) x+\alpha ay;mr]\in H(L)
$$
with $\sigma_a=\left(\begin{smallmatrix}\alpha& \beta\\ \gamma &
\delta\end{smallmatrix}\right)
\equiv\left(\begin{smallmatrix}a^{-1} & 0\\0& a\end{smallmatrix}\right)\bmod Q$.
We note that
$$
(x',y')\equiv \bigl(m(\alpha\delta+\beta\gamma)+2\alpha \gamma ab Q\bigr)(x,y)
\equiv m(x,y) \bmod 2s(L).
$$
Therefore if $\nu=\id$ then
$$
\nu([x',y';r'])=e^{2\pi i t(mr-\frac{1}2 m(x,y))}
$$
because $t\cdot s(L)\in \ZZ$. This proves the formula for odd $Q$.
If $Q$ is even we have
$[x',y';r']=[mx+Q\widetilde{x}, y+Q\widetilde{y};mr]$.
Then
$$
[-Q\widetilde{x},-Q\widetilde{y}; -\frac{Q^2}2(\widetilde{x},\widetilde{y})]\cdot [x',y';r']
=[mx, y;mr+\frac{Q}2\bigl(
-(\widetilde{x},y')+m(\widetilde{y},x')-Q(\widetilde{x},\widetilde{y})\bigr)].
$$
As $Q$ is even we have
$\nu([-Q\widetilde{x},-Q\widetilde{y}; -\frac{Q^2}2(\widetilde{x},\widetilde{y})])=1$.
But in this case $m=2m_0+1$ is odd so
$$
\nu([x',y';r'])=\nu([mx, y;mr])=\nu([x, y; mr-m_0(x,y)])=\nu([x, y;r]).
$$
We calculate the Fourier expansion
of $\varphi\vert_{k,t}T_-^{(Q)}(m)$ in the proof of Theorem \ref{thm-lift}
(see below). It shows that it is a holomorphic Jacobi form.
\end{proof}

Let $L$ be an even lattice.
{\it The stable orthogonal group} $\widetilde{\Orth}(L)$
is the subgroup of $\Orth^+(L)$ whose elements induce the identity
on the discriminant group $D(L)=L^\vee/L$
$$
\widetilde{\Orth}(L)=\{g\in \Orth(L) {\rm{\ such \ that \ }}
\forall\  l\in L^\vee\,:\, \ g(l)-l \in L\}.
$$
\begin{theorem}\label{thm-lift}
Let $\varphi\in J_{k,L;t}(v_\eta^D\times \nu)$, $k$ be  integral,
$t$  be  rational, $D$ be an even divisor of $24$.
If the conductor $Q=\frac{24}D$ is odd we assume that $\nu$ is trivial.
Fix $\mu\in(\ZZ/Q\ZZ)^*$. Then the function
$$
\Lift_{\mu}(\varphi)(Z)=f(0,0)E_k(\tau)+
\sum_{\substack{m\equiv \mu \bmod Q\\ m\geqslant 1}}
m^{-1}(\widetilde{\varphi}\vert_k T_-^{(Q)}(m))\circ \pi_{Qt}(Z),
$$
is a modular form of weight $k$ with respect to the stable orthogonal group
$\widetilde{\Orth}^+(2U\oplus L(Qt))$ of the even lattice $L(Qt)$
with a character of order $Q$ induced by
$v_{\eta,\mu}^D$, the binary Heisenberg character $\nu$
of $H_s(L(Qt))$ and the character $e^{2i\pi\frac{\mu}{Q}}$
of the center of $H(L(Qt))$.
In the formula above $f(0,0)$ is the zeroth Fourier coefficient of $\varphi$,
$E_k$ is the Eisenstein  series of weight $k$ with respect to $\SL_2(\ZZ)$
and the map $\pi_{Qt}$ was defined in \eqref{pi-N}.
\end{theorem}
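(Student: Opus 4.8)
The plan is to verify the three properties that characterise a modular form for $\widetilde{\Orth}^+(2U\oplus L(Qt))$ in its tube realisation $\cH(L(Qt))$: holomorphy together with a Fourier expansion supported on the appropriate positive cone, invariance under the parabolic (Jacobi) subgroup $\Gamma^J(L(Qt))$ fixing the $1$-dimensional cusp $F=\latt{f,f_1}$, and invariance under one further generator. I would invoke the generation result that $\widetilde{\Orth}^+(2U\oplus L(Qt))$ is generated by the parabolic subgroup $\Gamma^J(L(Qt))$ together with the involution $V:{}^t(\omega,\ffZ,\tau)\mapsto{}^t(\tau,\ffZ,\omega)$ exchanging the two copies $U$ and $U_1$; equivalently, by the two embedded copies of $\SL_2(\ZZ)$ (one acting on $\tau$, one on $\omega$) and the Heisenberg group. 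In this way the whole problem reduces to (i) parabolic invariance, which is built into the construction, and (ii) invariance under $V$, which becomes a symmetry statement about Fourier coefficients.

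For (i): by Proposition \ref{pr-hecke} each summand $\widetilde{\varphi}\vert_k T_-^{(Q)}(m)$ is the extended form of a Jacobi form of index $mt$ with $\SL_2(\ZZ)$-character $v_{\eta,m}^D$ and the same binary Heisenberg character $\nu$. Since $v_{\eta,m}^D$ depends only on $m\bmod Q$, restricting the sum to $m\equiv\mu\bmod Q$ yields a single $\SL_2(\ZZ)$-character $v_{\eta,\mu}^D$. Composing with $\pi_{Qt}$ rescales $\omega$ so that the $m$-th term becomes the $m$-th Fourier--Jacobi coefficient of a function on $\cH(L(Qt))$, periodic in $\omega$ with the period dictated by the even lattice $L(Qt)$; the central character $e^{2\pi i\mu/Q}$ of $H(L(Qt))$ is then read off from the congruence $m\equiv\mu$. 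Invariance under $\Gamma^J(L(Qt))$ — the $\SL_2(\ZZ)$ acting on $\tau$ and the full Heisenberg group — holds term by term, the zeroth summand $f(0,0)E_k(\tau)$ matching $v_{\eta,\mu}^D$ since $E_k$ is a level-one weight-$k$ form (this term is present only when $24\mid D$, i.e.\ $Q=1$, as otherwise $f(0,0)=0$).

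For (ii) I would compute the Fourier expansion of $\Lift_\mu(\varphi)$ explicitly. Expanding the Hecke operators as in the proof of Proposition \ref{pr-hecke} and collecting terms, the coefficient attached to an index vector $(n,l,m')$ is a finite divisor sum of the shape
\[
\sum_{\substack{d\mid (n,l,m')\\ (d,Q)=1}} d^{\,k-1}\,v_\eta^{D}(\sigma_d)\,
f\!\left(\tfrac{nm'}{d^{2}},\tfrac{l}{d}\right)
\]
(up to the normalisation of indices fixed by $t$ and $Q$), with congruence conditions encoding $m\equiv\mu\bmod Q$. The decisive point is that this expression is symmetric under the exchange $n\leftrightarrow m'$, which is exactly the invariance of the Fourier expansion under $V$ up to the character; the boundary indices with $nm'=0$ are produced by $f(0,0)$ times Eisenstein-type divisor sums and are accounted for by the $f(0,0)E_k(\tau)$ term, whose $V$-image supplies the missing $E_k(\omega)$-contribution. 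Holomorphy at infinity and convergence follow from the polynomial bound on the Fourier coefficients of Jacobi forms, exactly as in \cite[Theorem 3.1]{G-K3}.

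The main obstacle is step (ii): verifying the $n\leftrightarrow m'$ symmetry while simultaneously tracking the three characters — the $\SL_2(\ZZ)$-multiplier $v_{\eta,\mu}^D$, the binary Heisenberg character $\nu$ on $H_s(L(Qt))$, and the order-$Q$ central character $e^{2\pi i\mu/Q}$. The twisting by $\mu$ and the conductor $Q$ mean that the representatives $\sigma_a\in\SL_2(\ZZ)$ with $\sigma_a\equiv\diag(a^{-1},a)\bmod Q$ enter the divisor sum through $v_\eta^D(\sigma_d)$, and one must check that after the swap the residue classes modulo $Q$ transform so that the same character $v_{\eta,\mu}^D$ and the same central value $e^{2\pi i\mu/Q}$ reappear. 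This bookkeeping, rather than any new idea, is the crux; once it is settled, invariance under $\Gamma^J(L(Qt))$ and $V$ generate invariance under all of $\widetilde{\Orth}^+(2U\oplus L(Qt))$.
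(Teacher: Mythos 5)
Your plan follows the same route as the paper's proof: term-by-term invariance under the parabolic subgroup $\Gamma^J(L(Qt))$ via Proposition \ref{pr-hecke}, an explicit computation of the Fourier expansion, invariance under the involution $V$ exchanging $e_1$ and $f_1$ (which realizes $\tau\leftrightarrow\omega$ and fixes $\ffZ$), and the generation statement $\widetilde{\Orth}^+(2U\oplus L(-Qt))=\langle \Gamma^J(L(Qt)),V\rangle$ from \cite{G-K3}, \cite{GHS2}. Your handling of the term $f(0,0)E_k(\tau)$ (present only when $Q=1$, in which case $k\geqslant 4$ so $E_k$ exists) and your convergence argument via coefficient bounds (the paper instead bounds the extended Jacobi form using the invariant $\widetilde{v}$ of Lemma \ref{lem-v-free}) are acceptable variants.

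However, there is a genuine gap at precisely the point you defer as ``bookkeeping'': the $n\leftrightarrow m$ symmetry of the congruence conditions is not automatic, and nothing in your proposal supplies the fact that makes it true. After summing over $b\bmod d$, the $m$-th Hecke term contributes Fourier indices $(na,\,al,\,ad)$ with $ad=m\equiv\mu\bmod Q$, while the support of $\varphi$ (where $n\equiv \frac{D}{24}\bmod\ZZ$) imposes the \emph{asymmetric} condition $nd\equiv 1\bmod Q$. The missing ingredient is the special arithmetic of $24$: since $Q\mid 24$, every unit modulo $Q$ satisfies $a^2\equiv 1\bmod Q$ (equivalently, $\mu^2\equiv1\bmod 24$ for all $(\mu,24)=1$, and $24$ is the largest integer with this property). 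Writing $an=(ad)(nd)d^{-2}\equiv \mu\,(nd)\bmod Q$, one gets $nd\equiv 1\bmod Q$ if and only if $an\equiv\mu\bmod Q$; hence both the $\tau$-index $na$ and the $\omega$-index $ad$ run over the class $\mu\bmod Q$ and the support becomes symmetric. The coefficient itself, $\sum_{a\mid(n,l,m)}a^{k-1}v_\eta^D(\sigma_a)f(nmD/a^2,\,l/a)$, is then manifestly symmetric because it depends on $(n,m)$ only through the product $nm$ and $\gcd(n,l,m)$, and $v_\eta^D(\sigma_a)$ depends only on $a$ — so the character bookkeeping you single out as the crux is in fact immediate once the congruence equivalence is established. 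Note that this is exactly where the hypothesis $D\mid 24$ enters the theorem: for a conductor $Q\nmid 24$ the two conditions are genuinely different and the lifting would not be $V$-invariant.
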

\begin{proof}
{\bf The Eisenstein series} $E_k$.
First we note that  $f(0,0)$ could be non-zero only for the trivial character
$v_\eta^D=\id$. In this case
$\varphi(\tau, 0)=f(0,0)+ \dots$ is a non-zero  modular form
of weight $k$ with respect to $\SL_2(\ZZ)$.
Therefore $k\geqslant 4$ and $E_k$ is well defined.
We note that $E_k$ is a Jacobi form of index $0$.
\smallskip

{\bf The lattice $L(Qt)$.} This lattice  is even   for all $Q$.
The lattice $L(t)$ is integral for a non zero Jacobi form $\varphi$.
If $Q$ is odd then $L(t)$ is even because the character $\nu$
is trivial in this case (see Proposition \ref{pr-index}).
Therefore for all $Q$ the lattice $L(Qt)$ is even.
\smallskip

{\bf The character of} $\Gamma^J(L(Qt))$.
According to Proposition \ref{pr-hecke}
$$
\varphi_m(\tau,\mathfrak{Z})=(\varphi\vert_{k,t} T_-^{(Q)}(m))(\tau,\mathfrak{Z})
\in J_{k,L;mt}(v_{\eta,\mu}^D \times \nu).
$$
We can defined an extended Jacobi form using the map $\pi_{Qt}$ (see \eqref{pi-N}).
According to Proposition \ref{pr-index}
$$
\varphi_m(\tau,\mathfrak{Z})e^{2i\pi\frac{m}{Q}\omega}\in
\widetilde{J}_{k, L(Qt); \frac m{Q}}(v_{\eta,\mu}^D \times \nu)
$$
is a modular form of weight $k$ with respect to the parabolic
subgroup $\Gamma^J(L(Qt))$ of the orthogonal group
$\Orth^+(2U\oplus L(-Qt))$. We note that the character
$\nu$ of the minimal integral Heisenberg group $H_s(L(Qt))$ is extended
to the center of $H(L(Qt))$ by the formula
$$
\nu([0,0;r])=e^{2\pi i \frac{m}{Q} r}= e^{2\pi i \frac{\mu}{Q} r}.
$$
If $f(0,0)\ne 0$ then $v_\eta^D=\id$, i.e. $D=24$,  $Q=1$ and  $\nu=\id$.
Therefore all terms in the sum defining the lifting $\Lift_{\mu}(\varphi)$
have the same character with respect to
$\Gamma^J(L(Qt))<\Orth^+(2U\oplus L(-Qt))$.
\smallskip

{\bf Convergence.}
Let $Z={}^t(\omega, {}^t\ffZ,\tau)\in \cH(L(Qt))$.
The extended Jacobi form
$\widetilde{\varphi}(Z)=\varphi(\tau, \ffZ)\exp(2\pi i \frac{\omega}{Q})$
of index $\frac{1}Q$
is holomorphic at ``infinity'' $({\Im \omega}\to +i\infty)$.
Therefore
$|\widetilde{\varphi}|$ is bounded  in any neighborhood of ``infinity''
(see \cite{CG} and \cite{Kl}).
We can rewrite this fact using the free parameter
$\widetilde{v}=\widetilde{v}(Z)>0$
from Lemma \ref{lem-v-free}. Then we have
$$
|\varphi(\tau, \ffZ)|
\exp\bigl (- \frac{2\pi }{Q}\, \frac{(\hbox{Im}(\ffZ),\hbox{Im}(\ffZ))}{2v}\bigr)<C
$$
is bounded for $v={\rm Im}(\tau)> \varepsilon$ and the exponential term does not depend
on the action of $\Gamma^J(L(Qt))$.
Using the action of $\SL_2(\ZZ)<\Gamma^J(L(Qt))$ we obtain that
$$
|\varphi(\tau, \ffZ)|
\exp\bigl(-\frac{2\pi}{Q}\frac{(\hbox{Im}(\ffZ),\hbox{Im}(\ffZ))}{2v}\bigr)
<Cv^{-k}
$$
if $v\leqslant \varepsilon$ (see \cite[\S 2]{CG} for similar considerations).
Now we can get an estimation of all terms in the sum for $\Lift_{\mu}(\varphi)$
for $v>\varepsilon$. We have
$$
|a^k\varphi(\frac{a\tau+bQ}{d}, a\ffZ)
\exp\bigl(-2\pi \frac{1}{Q}(\frac{(\hbox{Im}(a\ffZ),\hbox{Im}(a\ffZ))}{2va/d})\bigr)|
<C d^k v^{-k}
$$
if $\frac {a}{d}v\leqslant \varepsilon$. If $\frac {a}{d}v> \varepsilon$
then we have $< Ca^k$.
In the both cases we see that the term above depending on $(a,b,d)$
is smaller than $C_\varepsilon m^k$. It gives us
$$
|m^{-1}\varphi_m(\tau,\mathfrak{Z})e^{2i\pi\frac{m}{Q}\omega}|
<C_\varepsilon m^k\sigma_0(m)
\exp(-\frac{2\pi m}{Q}\widetilde{v})<
C_\varepsilon m^{k+1}\exp(-2\pi \frac{m}{Q}\widetilde{v})
$$
where $\widetilde{v}(Z)>0$. Therefore the function $\Lift_{\mu}(\varphi)$
is well defined and it transforms like a modular form of weight $k$
and  character $v_{\eta,\mu}^D\times \nu\times e^{2\pi i \frac{\mu}{Q} r }$
with respect to the parabolic  subgroup $\Gamma^J(L(Qt))$.
\smallskip

{\bf Fourier expansion} of $\Lift_{\mu}(\varphi)$.
In the summation of the Fourier expansion of
$\varphi\in
{J}_{k, L; t}(v_{\eta}^D \times \nu)$
we have $n\equiv \frac{D}{24}\bmod \ZZ$ (see \eqref{F-exp-phi}).
Rewriting $n$ in terms of the conductor $Q=\frac{24}{D}$, 
the Fourier expansion of the function $\varphi$ has the following form
$$
\varphi(\tau,\mathfrak{Z})=
\sum_{\substack{n\equiv 1\bmod Q,\,n\geqslant 0\\
\vspace{1\jot}
\ l\in \frac 1{2} L^\vee\\
\vspace{0.5\jot} 2nt-(l,l) \geqslant 0}}
f(nD,l)e^{2i\pi(\frac{n}{Q}\tau+(l,\mathfrak{Z}))}.
$$
After the summation over $b\bmod d$ in the action of the Hecke
operator we get
$$
m^{-1}(\widetilde{\varphi}\vert_k T_-^{(Q)}(m))\circ \pi_{Qt}(Z)=
$$
$$
\sum_{\substack{ad=m\\ \vspace{0.5\jot} a>0}}a^{k-1} v_{\eta}^D(\sigma_a)
\sum_{\substack{nd\equiv 1 \bmod Q, n\geqslant 0\\
\vspace{0.5\jot}
l\in \frac{1}{2}L^\vee \\
\vspace{0.5\jot} 2ndDt-(l,l)\geqslant 0}}
f(ndD,l)e^{2i\pi(\frac{na}{Q}\tau+a(l,\mathfrak{Z})+\frac{ad}{Q}\omega)}.
$$
So we have
\begin{align*}
\Lift_{\mu}(\varphi)(Z)=
\sum_{\substack{m\equiv \mu \bmod Q\\
\vspace{0.5\jot} m\geqslant 1}}\
&\sum_{\substack{ad=m\\ \vspace{0.5\jot} a>0}}
a^{k-1}
 v_{\eta}^D(\sigma_a)\\
&\sum_{\substack{nd\equiv 1 \bmod Q\\
\vspace{0.5\jot}
l\in\frac{1}{2} L^\vee\\\vspace{0.5\jot}
2{ndDt}-(l,l)\geqslant 0}}
f(ndD,l)e^{2i\pi(\frac{na}{Q}\tau+a(l,\mathfrak{Z})+\frac{ad}{Q}\omega)}.
\end{align*}
But $nd\equiv 1 \bmod Q \Leftrightarrow an\equiv \mu \bmod Q$
because for any $(\mu,24)=1$ we have $\mu^2\equiv 1\bmod 24$.
(We note that $24$ is the maximal natural number with this property).
Using this property we obtain the Fourier expansion of  the lifting
\begin{align*}
\Lift_{\mu}(\varphi)(Z)&=
\hspace{-3\jot}
\sum_{\substack{m,n\equiv \mu \bmod Q\\
\vspace{0.5\jot}
m,n\geqslant 1 \\ \vspace{0.5\jot}
l\in \frac{1}2L^\vee\\
\vspace{0.5\jot}
2nmDt-(l,l)\geqslant 0}}
\left(\sum_{\substack{a|(n,l,m)}} a^{k-1} v_{\eta}^D(\sigma_a)
f(\frac{nmD}{a^2},\frac{l}{a})\right)
e^{2i\pi(\frac{n}{Q}\tau+({al},\mathfrak{Z}))+\frac{m}{Q}\omega)}.
\end{align*}
We can reformulate the condition on the hyperbolic norm of the index
$(n,l,m)$ of the Fourier coefficient in the term of the lattice
$L(Qt)$: $2\frac{ndD}{Q}-\frac{1}{Qt}(l,l)\geqslant 0$.

The formula for the Fourier expansion is symmetric with respect to $\tau$
and $\omega$. The involution $V$ which permutes the isotropic vectors $e_1$ and $f_1$
in the second copy of the hyperbolic plane of the lattice
$U\oplus U_1\oplus L(-Qt)$ realizes  the transformation
$\tau\leftrightarrow\omega$ and  $\mathfrak{Z}\leftrightarrow\mathfrak{Z}$.
We see that $V\in \widetilde{\Orth}^+(2U\oplus L(-Qt))$, $\rm{det}(V)=-1$,
$J(V,Z)=1$ and
$$
\Lift_{\mu}(\varphi)\vert_k V=\Lift_{\mu}(\varphi).
$$
It is known (see \cite[p. 1194]{G-K3} or \cite[Proposition 3.4]{GHS2})
that
$$
\widetilde{\Orth}^+(2U\oplus L(-Qt))=\langle \Gamma^J(L(Qt)),\ V \rangle.
$$
Therefore $\Lift_{\mu}(\varphi)$ is a modular form of weight $k$
with a character of order $Q$ with respect to
$\widetilde{\Orth}^+(2U\oplus L(-Qt))$.
\end{proof}

\noindent
{\bf Remark to Theorem \ref{thm-lift}}.
If $\mu=1$ then $\Lift(\varphi)=\Lift_{1}(\varphi)\not\equiv 0$ because
its  first Fourier--Jacobi coefficient $\widetilde{\varphi}$
is not zero. For $\mu\ne 1$ the function $\Lift_{\mu}(\varphi)$ might be
identically zero.  See \cite[Example 1.15]{GN3} for
a non-zero $\mu$-lifting  in the case of signature $(2,3)$.
\smallskip

At the end of the section we give the first application of Theorem \ref{thm-lift}.

\noindent{\bf Example 2.3.} {\it Modular forms of singular weight}.
The first example of such modular forms
$$
\Lift(\Theta_{E_8})\in M_4(\Orth(II_{2,10}))
$$
was given  in \cite{G1}. This function is  sometimes called the simplest modular form (or the Gritsenko form) because it has very simple
Fourier coefficients.
Using the theta-products  \eqref{theta-Am}--\eqref{theta-Dm3}
we can define modular forms of singular weight on orthogonal groups
with a character induced by $v_\eta^D$-character for
$D=2$, $4$, $6$, $8$, $12$ and $24$.
We give some examples below in order to illustrate different cases:
\[
\Lift(\vartheta_{D_8})\in M_4(\widetilde{\Orth}^+(2U\oplus D_8(-1))),\quad
\Lift(\vartheta_{D_{8}(3)})\in M_{4}(\widetilde{\Orth}^+(2U\oplus D_{8}(-9)),\chi_3),
\]
\[
\Lift(\vartheta_{4A_1})\in M_2(\widetilde{\Orth}^+(2U\oplus 4A_1(-1)), \chi_2),\
\Lift(\vartheta_{D_{24}(3)})\in M_{12}(\widetilde{\Orth}^+(2U\oplus D_{24}(-3))),
\]
\[
\Lift(\vartheta_{2A_1})\in M_1(\widetilde{\Orth}^+(2U\oplus 2\latt{-4}), \chi_4),
\quad
\Lift(\vartheta_{D_{2}(3)})\in M_{1}(\widetilde{\Orth}^+(2U\oplus 2\latt{-36}),\chi_{12})
\]
where $\chi_n$ denotes a character of order $n$ of the corresponding orthogonal group.
\smallskip

We note that in many case {\it the maximal modular group} of the lifting
is larger than the stable orthogonal group
$\widetilde{\Orth}^+(2U\oplus L(-1))$. For example,
the maximal modular group of
$\Lift(\eta^d\vartheta_{D_{m}})$ for any $d$ and $m$ such that
$d+3m\equiv 0\bmod 24$
is the full orthogonal group ${\Orth}^+(2U\oplus D_m(-1))$
if $m\ne 4$. The form $\Lift(\eta^d\vartheta_{D_{m}})$
is anti-invariant with respect to the involution of the Dynkin diagram (the  reflection with respect to a vector with square $4$).
If $m=4$ then
$$
{\Orth}^+(2U\oplus D_4(-1))\,/\,\widetilde{\Orth}^+(2U\oplus D_4(-1))
\cong S_3.
$$
The liftings of $\vartheta_{D_4}$, $\vartheta^{(2)}_{D_4}$,
$\vartheta^{(3)}_{D_4}$ (see Example 1.8) are modular with respect
to three different subgroups of order $3$ in ${\Orth}^+(2U\oplus D_4(-1))$.
\smallskip

The lifting of any theta-products vanishes along the divisors
of the corresponding Jacobi forms. In particular
$\Lift(\vartheta_{4A_1})$
vanishes with order one  along $z_i=0$.
It is known that the full divisor of this modular form
is equal to the union of all modular transformations of $z_i=0$,
i.e. this is a singular reflective modular form
with the simplest possible divisor (see \cite{G10}).
The same is true for $\Lift(\vartheta_{D_8})$.
The Fourier expansion  of $\Lift(\vartheta_{4A_1})$
(or $\Lift(\vartheta_{D_8})$)  written in a fixed Weyl chamber
of the corresponding orthogonal group will define generators
and relations of Lorentzian Kac--Moody algebras
(see \cite{GN1}--\cite{GN3}
and a  forthcoming paper of Gritsenko and Nikulin
about reflective groups of rank $\geqslant4$).
Here we consider the formula for $4A_1$ which was given without
proof in \cite{G10}.
\smallskip

\noindent{\bf Example 2.4.} {\it Jacobi lifting,
the modular tower $4A_1$ and  modular
forms  of ``Calabi--Yau type''.}
We consider the following theta-product as a Jacobi form of index $\frac{1}2$
$$
\vartheta_{4A_1}(\tau, \mathfrak{Z}_4)
=\vartheta(\tau,z_1)\dots \vartheta(\tau,z_4)
\in J_{2,4A_1;\frac{1}2}(v_\eta^{12}\times v_H^{\otimes 4}).
$$
According to  Theorem \ref{thm-lift} we get
$$
\Phi_2(Z):=\Lift(\vartheta_{4A_1})(\tau, \mathfrak{Z}_4,\omega)
\in M_2(\Orth^+(2U\oplus 4A_1(-1)),\chi_2)
$$
where $\chi_2$ is a character of order $2$ of the full orthogonal
group. The modular form  $\Phi_2$ is reflective
with the simplest possible divisor (see  \cite{G10}).
The Fourier expansion of this fundamental reflective form of singular weight
is the following
$$
\Phi_2(Z)=\sum_{ \ell=(l_1,\dots, l_4), \ l_i\equiv \frac 1{2} \,{\rm mod \,}\ZZ}
$$
$$
\sum_{\substack{
 n,\,m\in \ZZ_{>0}\\
 \vspace{0.5\jot} n\equiv m\equiv 1\,{\rm mod\,}\ZZ\\
\vspace{0.5\jot}  nm-(\ell,\ell)=0}}
\sigma_1((n,\ell,m))
\biggl(\frac{-4}{2l_1}\biggr)\dots \biggl(\frac{-4}{2l_4}\biggr)
\,e^{\pi i (n\tau+ (2\ell,\mathfrak{Z}_4)+m\omega)}
$$
where $\sigma_1(n)=\sum_{d|n} d$.
The quasi-pullbacks (see \cite{GHS}) of $\Phi_2$
along the divisors is again  reflective (see \cite{G10}).
In this way we obtain the $4A_1$-tower
of reflective modular forms in six, five, four and three
variables with respect to $\Orth^+(2U\oplus nA_1(-1))$
for $n=4$, $3$, $2$ and $1$:
$$
\Phi_2=\Lift(\vartheta_{4A_1}),\qquad \Lift(\eta^3\vartheta_{3A_1}),
$$
$$
K_4(\tau,z_1,z_2,\omega):=\Lift(\eta^6(\tau)\vartheta(\tau,z_1)\vartheta(\tau,z_2)),
\quad
\Delta_5=\Lift(\eta^9(\tau)\vartheta(\tau,z))
$$
where $\Delta_5\in S_5(\Sp_2(\ZZ),\chi_2)$ is
the Igusa modular form (a square root of the first Siegel cusp form
of weight $10$). The modular form $\Delta_5$ determines one of the most
fundamental Lorentzian Kac--Moody algebras related
to the second quantized elliptic genus of $K3$ surfaces
(see \cite{GN1},  \cite{DMVV} and  \cite{G-EG}).
The modular form
$$
K_4
\in S_4\bigl(\widetilde{\SO}^+(2U\oplus 2A_1(-1)), \chi_2\bigr)
$$
is the second member of the modular $4A_1$-tower based on
$\Delta_5$. This form defines an (elliptic) Lorentzian
Kac--Moody algebra of signature $(1,3)$
(see a forthcoming paper of Gritsenko and Nikulin).
Moreover $K_4(Z)dZ$
is the only canonical differential form
on the orthogonal modular variety
$$
M_{\chi_2}(2A_1)=\Gamma_{\chi_2}\setminus {\cD}(2U\oplus 2A_1(-1))
$$
of complex dimension $4$ and  of Kodaira dimension $0$ where
$\Gamma_{\chi_2}=\ker(\chi_2)$ (see \cite{G10}).
The first example of cusp forms of this  type was considered
in \cite{GH1} where it was shown that the modular form
$$
\Delta_{1}=\Lift(\eta(\tau)\vartheta(\tau,z))
\in S_{1}(\widetilde{\Orth}^+(2U\oplus \latt{-6}), \chi_6)
$$
determines the unique, up to a constant, canonical differential form
$\Delta_1^3(Z)dZ$ on the Barth-Nieto modular Calabi--Yau three-fold.
The second example of Siegel cusp forms of canonical weight
with the simplest possible divisor was constructed
in \cite{CG}:
$$
\nabla_3=\Lift(\eta(\tau)\eta(2\tau)^4\vartheta(\tau,z))
\in S_3(\Gamma^{(2)}_0(2),\chi_2)
$$
where $\Gamma^{(2)}_0(2)<\Sp_2(\ZZ)$ and
$\chi_2$ is its  character of order $2$.
A Calabi-Yau model of the Siegel modular three-fold
$\Gamma^{(2)}_0(2)_{\chi_2}\setminus \HH_2$
was found in \cite{FS-M}.
The modular form in four variables $K_4$
is the next example of cusp form of ``Calabi--Yau type''
similar to the Siegel modular forms $\Delta_1^3$ and $\nabla_3$.
We can ask a question about the existence of a compact
model of Calabi--Yau type of the modular variety
$M_{\chi_2}(2A_1)$ of dimension $4$
defined above.

\section {Modular forms of singular and  critical weights}

The minimal possible weight (singular weight) of holomorphic Jacobi form
for $L$ is $\frac{n_0}2$ where $n_0=\rank L$.
The first weight for which Jacobi cusp forms might appear is equal to $\frac{n_0+1}2$.
This weight is called {\it critical}.
In the case of classical modular forms
in one variable the critical weight is equal to $1$.
The simplest possible example of  modular forms
of critical weight in our context is the cusp form
$\Delta_{1}=\Lift(\eta\,\vartheta)$ of weight $1$
with a character of order $6$
for the lattice $2U\oplus \latt{-6}$ of signature $(2,3)$.
We mentioned in Example 2.4 above that
this function determines one of the basic Lorentzian Kac--Moody algebras
in the Gritsenko--Nikulin classification
(see \cite{GN2}--\cite{GN3}) and it induces the unique canonical differential
form on a special Calabi--Yau three-folds, the Barth--Nieto quintic.
We can construct a simple example of
modular form of critical weight with trivial character
using Theorem \ref{thm-lift}. This is
$$
\Lift(\eta\,\vartheta_{D_{23}(3)})
\in M_{12}(\widetilde{\Orth}^+(2U\oplus D_{23}(-3)))
$$
which is a modular form with trivial character
with respect to  the orthogonal group of signature $(2,25)$.
In this section we construct examples of Jacobi cusp forms of critical
weight for  all even ranks.
For this aim we use the pullback of Jacobi forms of singular weight
such that its Fourier coefficient $f(0,0)=0$.
This is exactly the case of $\vartheta_{D_{m}}$.

Let $M<L$ be an even  sublattice of $L$.
We can consider the Heisenberg group of $M$
as a subgroup of $H(L)$. Therefore if ${\rm rank} (M)={\rm rank} (L)$
then the Jacobi forms with respect to $L$ can be considered as Jacobi forms with respect to $M$.
In the next proposition we consider the operation of pullback.
\begin{proposition}\label{prop-pback}
Let $M<L$ be a sublattice of $L$ and ${\rm rank} (M)<{\rm rank} (L)$
$$
M\oplus M^\perp < L,\qquad
\ffZ=\ffZ_m\oplus \ffZ_{\perp}\in L\otimes \CC= (M\oplus M^\perp)\otimes \CC.
$$
For any $\varphi(\tau,\ffZ)\in J_{k,L;t}(\chi\times \nu)$
its pullback is also a Jacobi form
$$
\varphi|_{M}:=\phi(\tau,\ffZ_m)=\varphi(\tau,\ffZ)|_{(\ffZ_\perp=0)}
\in J_{k,M;t}(\chi\times \nu|_{\Gamma^J(M)}).
$$
The pullback of  a  Jacobi cusp form  is a cusp form or $0$.
\end{proposition}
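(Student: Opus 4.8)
The plan is to verify directly that the restricted function $\phi(\tau,\ffZ_m)=\varphi(\tau,\ffZ_m\oplus 0)$ satisfies the two functional equations \eqref{jacobi-A} and \eqref{jacobi-H} for the lattice $M$ together with the holomorphy condition \eqref{F-exp-phi}, as required by the definition of a Jacobi form. The single fact driving everything is that the decomposition $L\otimes\CC=(M\oplus M^\perp)\otimes\CC$ is orthogonal, so that for vectors supported on $M$ the bilinear form $(\cdot,\cdot)$ coincides with the form of $M$; in particular, setting $\ffZ_\perp=0$ commutes with the substitutions appearing in the Jacobi transformation laws.

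For the modular equation \eqref{jacobi-A}, I would take $A=\left(\smallmatrix a&b\\c&d\endsmallmatrix\right)\in\SL_2(\ZZ)$ and put $\ffZ_\perp=0$. Since the scaling $\ffZ\mapsto\ffZ/(c\tau+d)$ preserves the orthogonal decomposition and $(\ffZ,\ffZ)=(\ffZ_m,\ffZ_m)$ when $\ffZ_\perp=0$, the equation for $\varphi$ restricts verbatim to the one for $\phi$ with the unchanged factor $v_\eta^D(A)$. For the Heisenberg equation \eqref{jacobi-H} I would restrict the translation vectors to $x,y\in M$; then $x\tau+y\in M\otimes\CC$, all inner products $(x,x)$, $(x,y)$, $(x,\ffZ_m)$ are computed inside $M$, and the character entering the law is exactly the restriction of the Heisenberg character $\nu$ to $H(M)$. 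This yields the claimed character $\chi\times\nu|_{\Gamma^J(M)}$.

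The genuine content, and the step I expect to require the most care, is the Fourier expansion. Restricting \eqref{F-exp-phi} to $\ffZ_\perp=0$ and writing $l=l_m\oplus l_\perp$ with $l_m=\pr_M(l)$, one has $(l,\ffZ_m\oplus 0)=(l_m,\ffZ_m)$; moreover $l\in\frac12 L^\vee$ forces $2l_m\in M^\vee$ upon pairing against $M\subset L$, so $l_m\in\frac12 M^\vee$. Grouping the series by the pair $(n,l_m)$ produces a candidate coefficient $\sum_{\pr_M(l)=l_m}f(n,l)$, and I would check that this inner sum is finite: since $M^\perp$ is positive definite (as $L$ is positive definite) and $(l_\perp,l_\perp)=(l,l)-(l_m,l_m)\le 2nt-(l_m,l_m)$, the vectors $l_\perp$ are confined to a bounded region of a discrete set, hence are finite in number. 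The same inequality $(l_m,l_m)\le (l,l)$ shows $2nt-(l_m,l_m)\ge 2nt-(l,l)\ge 0$, so $\phi$ is holomorphic at infinity with the correct support, and its convergence is inherited from that of $\varphi$.

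Finally, the cusp form assertion is immediate from this last inequality: if every nonzero coefficient of $\varphi$ satisfies $2nt-(l,l)>0$, then $2nt-(l_m,l_m)\ge 2nt-(l,l)>0$ for every surviving index, so $\phi$ is a Jacobi cusp form unless all grouped coefficients vanish, in which case $\phi\equiv 0$.
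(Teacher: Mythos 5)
Your proof is correct and follows essentially the same route as the paper: the functional equations restrict trivially, the Fourier index is decomposed orthogonally as $l=l_m\oplus l_\perp$ with $l_m\in\frac12 M^\vee$, and the inequality $2nt-(l_m,l_m)\geqslant (l_\perp,l_\perp)\geqslant 0$ (strict for cusp forms) gives both holomorphy at infinity and the cusp-form assertion. Your explicit check that each grouped coefficient is a finite sum, via positive definiteness of $M^\perp$, is a detail the paper leaves implicit but adds nothing structurally different.
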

\begin{proof}
We note that the pullback of Jacobi form might be the zero function.
What is more interesting is that the pullback might be  a cusp form
although the original function is not.

The functional equations \eqref{jacobi-A}--\eqref{jacobi-H}
are  evidently true for $\varphi|_M$.
To calculate its Fourier expansion
we consider the embedding of the lattices
$$
M\oplus M^\perp < L<L^\vee < M^\vee \oplus (M^\perp)^\vee.
$$
We have to analyze the $M$-projection of any  vector
$l$ in $\frac{1}2L^\vee=L(2)^\vee$ in the Fourier expansion
\eqref{F-exp-phi}.
If the character $\nu$ of the minimal Heisenberg group  is trivial
then we do not need the coefficient $\frac{1}2$ before the lattices
dual to $L$ and $M$ in the calculation below.
For any $l\in \frac{1}2L^\vee=L(2)^\vee$ we have
the following  decomposition
$$
l=l_m\oplus l_{\perp}={\rm pr}_{M(2)^\vee} (l)\oplus {\rm pr}_{(M(2)^\perp)^\vee}(l)
\in M(2)^\vee \oplus (M(2)^\perp)^\vee.
$$
In the coordinates  $\ffZ=\ffZ_m\oplus \ffZ_{\perp}$ we have
$$
\varphi(\tau, \ffZ)=\sum_{n\geqslant0,\,l=l_m\oplus\, l_{\perp}}
f(n,l)e^{2\pi i({n}\tau+(l_m,\ffZ_m)+(l_\perp,\ffZ_\perp))}.
$$
Therefore
$$
\varphi|_{M}(\tau, \ffZ_m)=\sum_{n\geqslant0,\,l_m\in M(2)^\vee}
\bigl( \sum_{\substack{l_\perp\in (M(2)^\perp)^\vee\\
l_m\oplus\, l_{\perp}\in L(2)^\vee}}
f(n, l_m\oplus\, l_{\perp})\bigr)\,  e^{2\pi i({n}\tau+(l_m,\ffZ_m))}.
$$
We note that $2nt-(l_m,l_m)\geqslant(l_\perp, l_\perp)\geqslant0$.
The last inequality is strict
if $\varphi$ is a cusp form.
\end{proof}
Using the operation of pullback we can construct
Jacobi {\it cusp} forms of critical weight starting
from Jacobi forms of singular weight if  the constant term
$f(0,0)$ of the last one is equal to zero.
The estimation on $2nt-(l_m,l_m)$ at the end of the proof
of the last proposition
gives us the following estimation of the order at infinity
(see \eqref{ord-phi}) of the pullback.
\begin{corollary}\label{col-ordM}
In the conditions of Proposition \ref{prop-pback} we have
$$
{\rm Ord}(\varphi|_{M}(\tau, \ffZ_m))\geqslant \min \{(l_\perp, l_\perp)\,|\  l_\perp=
{\rm pr}_{(M^\perp)^\vee}(l)\ {\rm such\  that\  } f(n,l)\ne 0\}.
$$
In particular, if ${\rm pr}_{(M^\perp)^\vee}(l)\ne 0$ for all $f(n,l)\ne 0$
then  the pullback $\varphi|_{M}$ is a cusp form or the zero function.
\end{corollary}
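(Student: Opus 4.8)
The plan is to read the statement directly off the inequality proved at the very end of the proof of Proposition \ref{prop-pback}, so essentially no new computation is needed. Recall that there the Fourier coefficients of the pullback were found to be
$$
g(n,l_m)=\sum_{\substack{l_\perp\in (M(2)^\perp)^\vee\\ l_m\oplus l_\perp\in L(2)^\vee}} f(n,l_m\oplus l_\perp),
$$
and that each term contributing here obeys the orthogonal splitting $(l,l)=(l_m,l_m)+(l_\perp,l_\perp)$ together with the holomorphy condition $2nt-(l,l)\geqslant 0$ coming from \eqref{F-exp-phi}; combined these give $2nt-(l_m,l_m)\geqslant (l_\perp,l_\perp)$, with $l_\perp={\rm pr}_{(M^\perp)^\vee}(l)$.

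First I would pick a pair $(n,l_m)$ with $g(n,l_m)\neq 0$ that realises the minimum in the definition \eqref{ord-phi} applied to $\varphi|_M$, so that ${\rm Ord}(\varphi|_M)=2nt-(l_m,l_m)$. Since this coefficient is a (finite, nonzero) sum, at least one summand is nonzero, i.e.\ there exists $l_\perp^{*}={\rm pr}_{(M^\perp)^\vee}(l)$ with $l=l_m\oplus l_\perp^{*}$ and $f(n,l)\neq 0$. Applying the inequality above to this particular term yields
$$
{\rm Ord}(\varphi|_M)=2nt-(l_m,l_m)\geqslant (l_\perp^{*},l_\perp^{*}).
$$
As $(l_\perp^{*},l_\perp^{*})$ is one of the quantities appearing in the set on the right-hand side of the asserted inequality, it is bounded below by the minimum of that set, and the first claim follows at once.

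For the second statement I would use that the form $(\cdot,\cdot)$ is positive definite on $M^\perp$, so that $(l_\perp,l_\perp)>0$ whenever $l_\perp\neq 0$. Hence the hypothesis ${\rm pr}_{(M^\perp)^\vee}(l)\neq 0$ for every $f(n,l)\neq 0$ forces the minimum in the first part to be strictly positive, whence ${\rm Ord}(\varphi|_M)>0$. By the definition of a Jacobi cusp form this says that every nonzero Fourier coefficient of $\varphi|_M$ has index of positive hyperbolic norm, so $\varphi|_M$ is a cusp form unless it vanishes identically.

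The hard part is not a genuine obstacle but bookkeeping: one must keep track of the normalisation distinguishing $L^\vee$ from $L(2)^\vee$ (the factor $\tfrac12$ present when $\nu$ is nontrivial) when reading off $l_\perp$ as an element of $(M^\perp)^\vee$, and one must check that the minimum defining ${\rm Ord}$ is actually attained. The latter holds because the support of the expansion \eqref{F-exp-phi} is a discrete set on which the norms $2nt-(l,l)$ are bounded below by $0$, so a nonzero Jacobi form has a well-defined finite order.
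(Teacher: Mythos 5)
Your proposal is correct and takes essentially the same route as the paper: the authors deduce the corollary directly from the inequality $2nt-(l_m,l_m)\geqslant(l_\perp,l_\perp)\geqslant 0$ established at the end of the proof of Proposition \ref{prop-pback}, exactly as you do, and your extra bookkeeping (choosing a minimizing index, noting that a nonzero coefficient of the pullback must have a nonzero summand $f(n,l_m\oplus l_\perp)$, and invoking positive definiteness on $M^\perp$ for the cusp-form conclusion) just makes explicit what the paper leaves implicit.
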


Using the last corollary  we can construct new important examples
of Jacobi forms of singular and critical weights.
We recall that by $J_{k,L}$ we denote the space of Jacobi forms
of index one.

We define the root lattice $A_m$ as a sublattice of $D_{m+1}$
$$
A_m=\{(x_1,\dots,x_{m+1})\in \ZZ^{m+1}\,|\, x_1+\dots+x_{m+1}=0\} < D_{m+1}.
$$
We  note that
$A_1\cong \latt{2}$, $A_1\oplus A_1\cong D_2$ and  $A_3\cong  D_3$.

\begin{proposition}\label{Ptheta-Am}
{\rm 1)} Let $v=2(b_1,\ldots ,b_m)\in  \ZZ^m$ be an element in  $D_m$
with at least two non-zero coordinates $b_i$  such that
$(b_1+\ldots +b_m)\equiv 1 \mod 2$ and g.c.d.$(b_1,\dots, b_m)=1$.
Then
$$
\vartheta_{D_m}|_{v^\perp}
\in J_{\frac{m}2, v^\perp_{D_m}}^{cusp} (v_\eta^{3m})
\quad{and}\quad
\vartheta_{D_m(3)}|_{v^\perp}
\in J_{\frac{m}2, v^\perp_{D_m(3)}}^{cusp} (v_\eta^{m})
$$
is a non-zero Jacobi cusp form of critical weight such that
$$
{\rm Ord}(\vartheta_{D_m}|_{v^\perp})= \frac{1}{(v,v)}>0
\qquad
{\rm and}\qquad
{\rm Ord}(\vartheta_{D_m(3)}|_{v^\perp})= \frac{1}{3(v,v)}>0.
$$
\newline
{\rm 2)}  The theta-product
$$
\vartheta_{A_m}(\tau, z_1,\dots,z_m)
=\vartheta(\tau, z_1)\cdot \ldots\cdot \vartheta(\tau, z_m)
\cdot\vartheta(\tau, z_1+\ldots+z_m)\in J_{\frac{m+1}2, A_m}(v_\eta^{3m+3})
$$
is a Jacobi  form of critical weight.
If $m$ is  even then $\vartheta_{A_m}$ is a Jacobi cusp form
and
$$
{\rm Ord}(\vartheta_{A_m})=\frac{1}{4(m+1)}>0.
$$
{\rm 3)} For the renormalized lattice $A_m(3)$ the Jacobi form
$$
\vartheta_{A_m(3)}(\tau, z_1,\dots,z_m)
=\vartheta_{3/2}(\tau, z_1)\cdot \ldots\cdot \vartheta_{3/2}(\tau, z_m)
\cdot \vartheta_{3/2}(\tau, z_1+\ldots+z_m)
$$
belongs to  $J_{\frac{m+1}2, A_m(3)}(v_\eta^{m+1})$.
For even $m$
$$
{\rm Ord}(\vartheta_{A_m(3)})=\frac{1}{12(m+1)}>0.
$$
\end{proposition}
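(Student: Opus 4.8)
The plan is to treat all three parts uniformly as \emph{pullbacks} of the singular-weight theta-products attached to $D$-lattices (Examples 1.8 and 1.11), using Proposition \ref{prop-pback} for membership and Corollary \ref{col-ordM} to control cuspidality and the order. The common input is the Fourier data of the building blocks: from \eqref{theta} the nonzero coefficients of $\vartheta(\tau,z)$ occur at $z$-exponent $n/2$ with $n$ odd and value $\left(\frac{-4}{n}\right)=\pm1$, and from \eqref{theta3/2} the same holds for $\vartheta_{3/2}$ with $n$ coprime to $12$ (in particular odd). Hence every nonzero coefficient of $\vartheta_{D_m}$ (resp. $\vartheta_{D_m(3)}$) is indexed by $l=\tfrac12(n_1,\dots,n_m)$ with all $n_i$ odd and has value $\prod_i\left(\frac{-4}{n_i}\right)\ne0$; for the renormalized lattice the vector recorded in \eqref{F-exp-phi} is instead $l=\tfrac16(n_1,\dots,n_m)$, a bookkeeping factor I shall track.

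For part 1 I restrict $\vartheta_{D_m}$ to $M=v^\perp$, of rank $m-1$. Proposition \ref{prop-pback} gives $\vartheta_{D_m}|_{v^\perp}\in J_{m/2,\,v^\perp}(v_\eta^{3m})$, and since the singular weight of a rank-$(m-1)$ lattice is $\tfrac{m-1}2$, the weight $\tfrac m2$ is exactly critical. Writing $v=2(b_1,\dots,b_m)$, the component of $l$ along $M^\perp=\QQ v$ is governed by $(l,v)=\sum_i n_i b_i$. As all $n_i$ are odd, $(l,v)\equiv\sum_i b_i\equiv1\pmod 2$, so $(l,v)$ is odd, hence nonzero, for \emph{every} nonzero coefficient; Corollary \ref{col-ordM} then forces the pullback to be a cusp form (once shown nonzero). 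For the order I use $(l_\perp,l_\perp)=(l,v)^2/(v,v)$; since $\gcd(b_i)=1$ and $\sum_ib_i$ is odd, substituting $n_i=1+2d_i$ shows $\sum_i n_ib_i$ runs through all odd integers, so $|(l,v)|=1$ is attained and $\mathrm{Ord}=1/(v,v)$. The renormalized case is identical after tracking the factor $3$: there $(l,v)_{D_m(3)}=\sum_i n_ib_i$ while $(v,v)_{D_m(3)}=3(v,v)$, giving $\mathrm{Ord}=1/\bigl(3(v,v)\bigr)$.

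For parts 2 and 3 I realize $\vartheta_{A_m}$ as a pullback: with $A_m=\{\sum x_i=0\}<D_{m+1}$ one has $A_m^\perp=\QQ v_0$, $v_0=(1,\dots,1)$, and restricting $\vartheta_{D_{m+1}}=\prod_{i=1}^{m+1}\vartheta(\tau,\zeta_i)$ to $A_m\otimes\CC$ (i.e. $\sum\zeta_i=0$, with $\zeta_i=z_i$ for $i\le m$ and $\zeta_{m+1}=-\sum z_i$) yields $-\vartheta_{A_m}$ by oddness of $\vartheta$. Proposition \ref{prop-pback} then gives $\vartheta_{A_m}\in J_{(m+1)/2,\,A_m}(v_\eta^{3m+3})$ of critical weight $\tfrac{m+1}2$; the same with $\vartheta_{3/2}$ and $D_{m+1}(3)$ gives $\vartheta_{A_m(3)}\in J_{(m+1)/2,\,A_m(3)}(v_\eta^{m+1})$. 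For even $m$ the perpendicular pairing is $(l,v_0)=\tfrac12\sum_i n_i$ with $m+1$ (an odd number) of odd $n_i$, so $\sum_i n_i$ is odd and $(l,v_0)\ne0$ always; Corollary \ref{col-ordM} gives cuspidality, and $|\sum_i n_i|=1$ is attained, so $|(l,v_0)|=\tfrac12$ and $\mathrm{Ord}=\tfrac14/(v_0,v_0)$ with $(v_0,v_0)=m+1$ (resp. $3(m+1)$), i.e. $\tfrac1{4(m+1)}$ (resp. $\tfrac1{12(m+1)}$). For odd $m$ no cuspidality is claimed, consistently, since then $\sum_i n_i$ is even and can vanish.

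The step I expect to be the real content is upgrading the inequality of Corollary \ref{col-ordM} to the exact values of $\mathrm{Ord}$: this needs the minimal level to carry a genuinely nonzero coefficient of the pullback, i.e. no cancellation in the fiber sum $\sum_{l_\perp}f(n,l_m\oplus l_\perp)$. I would show the fiber over a minimal $l_m$ is a single point. If $l,l'$ are admissible indices with the same $l_m$, then $l-l'$ lies both on the line $\QQ v$ (resp. $\QQ v_0$) and in the lattice of differences of admissible indices ($\ZZ^m$ in the $D_m$ case, $\ZZ^{m+1}$ for $A_m$, and $\tfrac13\ZZ^{m}$, $\tfrac13\ZZ^{m+1}$ after renormalization); intersecting with the line and pairing against the gap $(l,v)-(l',v)=\pm2$ (resp. $(l,v_0)-(l',v_0)=\pm1$) yields $k\sum_i b_i^2=1$ (resp. $k(m+1)=1$) for an integer $k$, impossible once $\sum_i b_i^2\ge2$ (resp. $m+1\ge3$). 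Hence at the minimal level the coefficient is a single $\prod_i\left(\frac{-4}{n_i}\right)=\pm1\ne0$, the pullback is nonzero, and $\mathrm{Ord}$ equals the stated value.
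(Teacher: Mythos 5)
Your proof is correct and follows essentially the same route as the paper's: each form is realized as a pullback of a $D$-lattice theta-product, Proposition \ref{prop-pback} gives membership and the critical weight, the parity computation $(l,v)\equiv\sum_i b_i\equiv 1\bmod 2$ on the support of the Fourier expansion gives cuspidality via Corollary \ref{col-ordM}, and an index with $|(l,v)|=1$ pins down the order; your reduction of parts 2) and 3) to part 1) via $\vartheta_{A_m}=-\vartheta_{D_{m+1}}|_{v^\perp}$ is also exactly the paper's. The one genuine difference is your last paragraph: the paper passes directly from the existence of an index with $(l,v)=1$ to the equality $\mathrm{Ord}=1/(v,v)$ and to the non-vanishing of the pullback, although Corollary \ref{col-ordM} only yields an inequality and says nothing about possible cancellation in the fiber sum $\sum_{l_\perp}f(n,l_m\oplus l_\perp)$; your fiber-uniqueness argument (two admissible indices with the same $M$-projection differ by an element of $\QQ v\cap\ZZ^m$, which forces $\sum_i b_i^2=1$, impossible with two non-zero $b_i$) supplies exactly this missing step, so on this point your write-up is more complete than the paper's. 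One caveat you share with the paper: in the renormalized cases the minimizing index must have all $n_i$ coprime to $12$, not merely odd, so attaining $|\sum_i n_i b_i|=1$ requires a small extra argument modulo $3$ (since $3\nmid\gcd(b_1,\dots,b_m)$, some choice of signs gives $\sum_i\epsilon_i b_i\not\equiv 0\bmod 3$, after which one adjusts by multiples of $6$); both your phrase ``identical after tracking the factor $3$'' and the paper's ``quite similar'' elide this point.
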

\begin{proof}
1) If in $v$ only one $b_i\ne 0$ then $\vartheta_{D_m}|_{v^\perp}\equiv 0$.
To prove the lemma we calculate the Fourier expansion
of the pullback function.
The discriminant group of $D_m$ was given in Example 1.9.
The Fourier expansion  of $\vartheta_{D_m}$  has the following form
$$
\vartheta_{D_m}(\tau, \ffZ_m)=
\sum_{\substack{  n\in \QQ_{>0},\ l \in \frac{1}2 \ZZ^m
\vspace{0.5\jot} \\
 2n-(l,l)= 0}}
f(n,l)\, e^{2\pi i (n\tau+(l,\ffZ_m))}
$$
where
$$
f(n,l)=\biggl(\frac{-4}{2l}\biggr)=
\biggl(\frac{-4}{2l_1}\biggr)\cdot \ldots
\cdot\biggl(\frac{-4}{2l_m}\biggr)
$$
is the product of the generalized Kronecker symbols modulo $4$.
In particular all coordinates $2l_i$ are odd.
If $v$ is a vector which satisfies the condition of the proposition
then $(l, v)=2(l_1b_1+\ldots+l_mb_m)\equiv 1 \mod 2$.
The  lattice $\latt {v}^\vee$ is
generated by $\frac{v}{(v,v)}$. We get that
$$
l_\perp={\rm pr}_{\latt {v}^\vee}(l)=(l,v)\frac v{(v,v)} \ne 0
$$
is always non trivial.
Moreover, there exists a vector $2l=(2l_i)$ with  odd coordinates
such that $(l,v)=1$.
According to Corollary \ref{col-ordM}
$$
{\rm Ord}(\vartheta_{D_m}|_{v^\perp})=|(\frac{v}{(v,v)}, \frac{v}{(v,v)})|=\frac{1}{|(v,v)|}>0
$$
and  $\vartheta_{D_m}|_{v^\perp}$ is  a Jacobi cusp form.
The proof for $D_m(3)$ is quite similar.

2) We have $A_m=v^{\perp}_{D_{m+1}}$ where $v=2(1,\dots,1)\in D_{m+1}$.
In particular $z_{m+1}=-(z_1+\ldots +z_{m})$ and
$\vartheta_{A_m}=-\vartheta_{D_{m+1}}|_{v^\perp}$. If $m$ is even then
$v$ satisfies the condition in 1) and
${\rm Ord}(\vartheta_{A_m})=\frac{1}{4(m+1)}$.
The proof of 3) is similar.
\end{proof}

\noindent
{\bf Example.} Since $A_3\cong D_3$ there exist a Jacobi form of singular
and two Jacobi forms (cusp and non-cusp) of critical weight
for this lattice
$$
\vartheta_{D_3}\in J_{\frac{3}2,D_3}(v_\eta^9),\quad
\eta\,\vartheta_{D_3}\in J^{cusp}_{2,D_3}(v_\eta^{10}),\quad
\vartheta_{A_3}\in J_{2,A_3}(v_\eta^{12}).
$$

We can construct many   Jacobi forms of singular, critical
and other small weights using
the equalities of the previous proposition.
For any $\varphi\in J_{k, L;t}(\chi)$ we denote by $\varphi^{[n]}$
the direct (tensor) product of $n$-copies
of $\varphi$, i.e. the Jacobi form for the lattice $nL$
$$
\varphi^{[n]}(\tau, (\ffZ_1,\ldots, \ffZ_n))=
\varphi(\tau, \ffZ_1)\cdot\ldots\cdot \varphi(\tau, \ffZ_n)\in J_{nk, nL;t}(\chi^n).
$$
The next example is very important.
\begin{corollary}\label{singJ-A2}
There exists a  Jacobi form of singular weight for $A_2$
\begin{equation}\label{sigmaA2}
\sigma_{A_2}(\tau, z_1,z_2)
=\frac{\vartheta(\tau,z_1)\vartheta(\tau,z_2)\vartheta(\tau,z_1+z_2)}
{\eta(\tau)} \in J_{1,A_2}(v_\eta^8).
\end{equation}
In particular the Jacobi form of singular weight
$\sigma_{3A_2}=\sigma_{A_2}^{[3]}\in  J_{3,3A_2}$
has trivial character.
\end{corollary}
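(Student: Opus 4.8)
The plan is to obtain $\sigma_{A_2}$ by dividing the critical-weight theta-product $\vartheta_{A_2}$ of Proposition \ref{Ptheta-Am} by the Dedekind $\eta$-function and to track weight, character and holomorphy through the division. By Proposition \ref{Ptheta-Am}(2), $\vartheta_{A_2}=\vartheta(\tau,z_1)\vartheta(\tau,z_2)\vartheta(\tau,z_1+z_2)\in J^{cusp}_{3/2,A_2}(v_\eta^9)$ is a cusp form of critical weight, index one, with trivial Heisenberg character (the lattice $A_2$ being even, see Proposition \ref{pr-index}(1)) and with ${\rm Ord}(\vartheta_{A_2})=\tfrac1{12}$. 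Since $\eta$ is a form of weight $\tfrac12$ and multiplier $v_\eta$ that is nowhere vanishing on $\HH_1$ and independent of $\ffZ$, the quotient $\sigma_{A_2}=\vartheta_{A_2}/\eta$ is holomorphic on $\HH_1\times(A_2\otimes\CC)$ and inherits from $\vartheta_{A_2}$ both the index-one exponential automorphic factor and the trivial Heisenberg character. First I would record the elementary bookkeeping: the weight drops to $\tfrac32-\tfrac12=1$ and the $\SL_2(\ZZ)$-character becomes $v_\eta^9\cdot v_\eta^{-1}=v_\eta^8$, so $\sigma_{A_2}$ satisfies \eqref{jacobi-A} and \eqref{jacobi-H} with exactly the data asserted in \eqref{sigmaA2}.

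The main obstacle is to verify that $\sigma_{A_2}$ is genuinely holomorphic at infinity, i.e. that its Fourier expansion is supported on $2n-(l,l)\geqslant0$; pointwise holomorphy on $\HH_1$ alone does not exclude a merely weakly holomorphic form, since $l$ can be large. I would settle this by a direct convolution estimate on the Fourier coefficients. Writing $\eta^{-1}=\sum_{k\geqslant0}p(k)\,q^{k-1/24}$ with $p(k)\geqslant0$ the partition numbers, the coefficients $g(n,l)$ of $\sigma_{A_2}$ are the finite sums $\sum_{k\geqslant0}p(k)\,f(n-k+\tfrac1{24},\,l)$, where $f$ denotes the coefficients of $\vartheta_{A_2}$. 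For each nonzero contributing term the cusp condition ${\rm Ord}(\vartheta_{A_2})=\tfrac1{12}$ gives $2(n-k+\tfrac1{24})-(l,l)\geqslant\tfrac1{12}$, hence $2n-(l,l)\geqslant 2k\geqslant0$. Thus the shift $-\tfrac1{24}$ produced by $\eta^{-1}$ is exactly absorbed by the order $\tfrac1{12}=2\cdot\tfrac1{24}$ of the cusp form, and $\sigma_{A_2}$ is holomorphic at infinity (the congruence $n\equiv\tfrac13\bmod\ZZ$ being consistent with $D=8$). Since the weight $1=\tfrac{\rank A_2}{2}$ is the singular weight, the property of singular-weight Jacobi forms recalled in Section 1 (namely ${\rm Ord}=0$) then forces all Fourier coefficients to satisfy $2n-(l,l)=0$, so that $\sigma_{A_2}\in J_{1,A_2}(v_\eta^8)$ is indeed a Jacobi form of singular weight.

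For the second assertion I would apply the tensor-product rule of Proposition \ref{pr-prodJ} to $\sigma_{A_2}^{[3]}=\sigma_{3A_2}$. The weight becomes $3\cdot1=3=\tfrac{\rank(3A_2)}{2}$, again singular, and the character becomes $(v_\eta^8)^3=v_\eta^{24}$ together with the cube of the trivial Heisenberg character. As $v_\eta^{24}=1$, the resulting $\SL_2(\ZZ)$-character is trivial, whence $\sigma_{3A_2}\in J_{3,3A_2}$ with trivial character, as claimed. The only genuinely nontrivial point in the whole argument is the holomorphy-at-infinity step above; everything else is weight, character and Heisenberg bookkeeping.
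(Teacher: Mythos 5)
Your proposal is correct and follows essentially the same route as the paper: the paper's proof also divides the theta-product by $\eta$ and observes that ${\rm Ord}(\varphi/\eta)={\rm Ord}(\varphi)-\frac{1}{12}$, which is exactly compensated by ${\rm Ord}(\vartheta_{A_2})=\frac{1}{12}$ from Proposition \ref{Ptheta-Am}. Your convolution computation with partition numbers merely spells out the paper's one-line observation $\eta(\tau)^{-1}=q^{-1/24}(1+q(\dots))$, and the weight/character bookkeeping for $\sigma_{3A_2}$ matches as well.
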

\begin{proof}
We  note that
$\eta(\tau)^{-1}=q^{-1/24}(1+q(\dots))$. Therefore
$$
{\rm Ord}(\frac{\varphi(\tau, \ffZ)}{\eta(\tau)})=
{\rm Ord}(\varphi(\tau, \ffZ))-\frac{1}{12}.
$$
Therefore $\sigma_{A_2}$ is holomorphic Jacobi form of singular weight
for $A_2$.
\end{proof}

{\bf Remarks}. 1)
The Jacobi form $\sigma_{A_2}$ is equal to the denominator
function of the affine Lie algebra $A_2$ (see \cite{KP} and \cite{D}).
We consider the Jacobi forms related to the  denominator functions
of all affine Kac--Moody Lie algebras
in a forthcoming paper of V. Gritsenko and  K.-I. Iohara.

2) The lifting of
$\sigma_{3A_2}$ is a reflective modular form of singular weight.
The lifting of $\eta^8\sigma_{2A_2}$ determined
the unique canonical differential form on a modular variety
of Kodaira dimension $0$  (see \cite{G10}).

3) The form $\sigma_{A_2}$ is the first example
of Jacobi form obtained as  theta/eta-quotients.
Using such Jacobi form we can  produce important classical Jacobi forms
in one variable called {\it theta-blocks}.
See Corollary  \ref{quarks} and \cite{GSZ}.
\smallskip

Using the same principle we obtain
\begin{corollary}\label{critJ-2A4}
The Jacobi forms given below are cusp forms of critical weight.
$$
\kappa_{2A_4}=\frac{\vartheta_{A_4} \otimes \vartheta_{A_4}}{\eta}
\in J^{cusp}_{\frac{9}2, 2A_4}(v_\eta^5),
\qquad
\kappa_{A_4\oplus A_6}=
\frac{\vartheta_{A_4}\otimes \vartheta_{A_6}}{\eta}
\in J^{cusp}_{\frac{11}2, A_4\oplus A_6}(v_\eta^{11}).
$$
Let
$v_5=2(2,1,0,\dots,0)\in D_m$ ($m\geqslant2$)
and $v_7=2(2,1,1,1,0,\dots,0)\in D_n$ ($n\geqslant4$).
Then
$$
\frac{\vartheta_{D_m}|_{v_5^\perp}\otimes\vartheta_{D_n}|_{v_a^\perp}}
{\eta}
\in J^{cusp}_{\frac{m+n-1}{2}, {D_m}|_{v_5^\perp}\otimes
{D_n}|_{v_a^\perp}}(v_{\eta}^{3(n+m)-1})
$$
where $a=5$ or $7$.
\end{corollary}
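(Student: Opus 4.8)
The plan is to exhibit every form in the list as a quotient $\varphi/\eta$, where $\varphi$ is a tensor product of cusp Jacobi forms already constructed in Proposition \ref{Ptheta-Am}. The building blocks are $\vartheta_{A_4}$, $\vartheta_{A_6}$ for the first two forms and the pullbacks $\vartheta_{D_m}|_{v_5^\perp}$, $\vartheta_{D_n}|_{v_a^\perp}$ for the last family. First I would check that $v_5=2(2,1,0,\dots,0)$ and $v_7=2(2,1,1,1,0,\dots,0)$ satisfy the hypotheses of Proposition \ref{Ptheta-Am}(1): each has at least two non-zero entries, odd coordinate-sum ($3$, resp.\ $5$), and g.c.d.\ $1$. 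Hence the pullbacks are non-zero cusp forms of critical weight, with ${\rm Ord}(\vartheta_{D_m}|_{v_5^\perp})=1/(v_5,v_5)=1/20$ and ${\rm Ord}(\vartheta_{D_n}|_{v_7^\perp})=1/(v_7,v_7)=1/28$; and Proposition \ref{Ptheta-Am}(2) gives ${\rm Ord}(\vartheta_{A_4})=1/20$, ${\rm Ord}(\vartheta_{A_6})=1/28$. All of these forms have index $1$, a power of $v_\eta$ as $\SL_2(\ZZ)$-character, and (being index-one forms for even lattices) trivial Heisenberg character by Proposition \ref{pr-index}.

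Next I would combine the factors by Proposition \ref{pr-prodJ}: the weights add, the $v_\eta$-exponents add, the resulting lattice is again even of index one (so the Heisenberg character stays trivial), and since each factor is a cusp form the tensor product is as well. The extra ingredient I need is that the order at infinity is additive: writing the Fourier coefficient of $\varphi_1\otimes\varphi_2$ at $(n_1+n_2,\,l_1\oplus l_2)$ as $f_1(n_1,l_1)f_2(n_2,l_2)$, one has
\[
2(n_1+n_2)-(l_1\oplus l_2,\,l_1\oplus l_2)=\bigl(2n_1-(l_1,l_1)\bigr)+\bigl(2n_2-(l_2,l_2)\bigr),
\]
so minimizing over the support yields ${\rm Ord}(\varphi_1\otimes\varphi_2)={\rm Ord}(\varphi_1)+{\rm Ord}(\varphi_2)$. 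This gives ${\rm Ord}(\vartheta_{A_4}\otimes\vartheta_{A_4})=1/10$, ${\rm Ord}(\vartheta_{A_4}\otimes\vartheta_{A_6})=3/35$, and for the third family $1/10$ (if $a=5$) or $3/35$ (if $a=7$).

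Finally I would divide by $\eta\in M_{1/2}(\SL_2(\ZZ),v_\eta)$, exactly as in Corollary \ref{singJ-A2}. Because $\eta$ is nowhere vanishing on $\HH_1$ and depends only on $\tau$, the quotient is holomorphic on $\HH_1\times(L\otimes\CC)$, its weight drops by $\tfrac12$, its $\SL_2(\ZZ)$-character is multiplied by $v_\eta^{-1}$, and the index is unchanged. Using $\eta^{-1}=q^{-1/24}(1+q(\dots))$ the order drops by exactly $\tfrac1{12}$, so ${\rm Ord}(\varphi/\eta)={\rm Ord}(\varphi)-\tfrac1{12}$. Tracking the bookkeeping then recovers the asserted weights $\tfrac92$, $\tfrac{11}2$, $\tfrac{m+n-1}2$ (each one half-unit above the singular weight of the relevant lattice, hence critical) and characters $v_\eta^5$, $v_\eta^{11}$, $v_\eta^{3(n+m)-1}$, after reducing $v_\eta^{29}$ and $v_\eta^{35}$ modulo $v_\eta^{24}=\id$.

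The crux — and the only real obstacle — is to verify that the order remains strictly positive after dividing by $\eta$, which is what distinguishes a genuine holomorphic cusp form from a merely meromorphic quotient. Here the margins are slim but positive: $\tfrac1{10}-\tfrac1{12}=\tfrac1{60}>0$ and $\tfrac3{35}-\tfrac1{12}=\tfrac1{420}>0$. Since in every case ${\rm Ord}(\varphi/\eta)>0$, each quotient is holomorphic at infinity with positive order, i.e.\ a Jacobi cusp form of the stated critical weight.
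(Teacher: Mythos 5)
Your proposal is correct and follows essentially the same route as the paper: the paper's (very terse) proof simply invokes Proposition \ref{Ptheta-Am} to record ${\rm Ord}(\kappa_{2A_4})=\tfrac1{60}$, ${\rm Ord}(\kappa_{A_4\oplus A_6})=\tfrac1{420}$, ${\rm Ord}(\vartheta_{D_m}|_{v_5^\perp})=\tfrac1{20}$, ${\rm Ord}(\vartheta_{D_m}|_{v_7^\perp})=\tfrac1{28}$, implicitly using exactly your two ingredients (additivity of ${\rm Ord}$ under tensor products and the drop by $\tfrac1{12}$ upon division by $\eta$, as in Corollary \ref{singJ-A2}). You have merely made explicit the bookkeeping the authors left unsaid, so there is nothing to correct.
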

\begin{proof} According to Proposition \ref{Ptheta-Am}
$$
{\rm Ord}(\kappa_{2A_4})=\frac{1}{60},\
{\rm Ord}(\kappa_{A_4 \oplus A_6})=\frac{1}{420},\
{\rm Ord}(\vartheta_{D_m}|_{v_5^\perp})=\frac{1}{20},\
{\rm Ord}(\vartheta_{D_m}|_{v_7^\perp})=\frac{1}{28}.
$$
\end{proof}
\noindent
{\bf Remark.}
In the same way  we get {\it non-cusp} Jacobi forms of weight
$\frac{n_0}2+1$ (singular weight $+1$):
$$
\frac{\vartheta_{A_4}^{[5]}}{\eta^3}
\in J_{11, 5A_4},\quad
\frac{\vartheta_{A_6}^{[7]}}{\eta^3}
\in J_{23, 7A_6},\quad
\frac{\vartheta_{A_8}^{[3]}}{\eta}
\in J_{13, 3A_8}(v_\eta^8).
$$
The first two functions have trivial character. It might be that these functions are interesting Eisenstein series. We can also mention
the non-cusp form
$$
\vartheta_{A_2(3)}^{[3]}/\eta\in
J_{4,3A_2(3)}(v_\eta^8).
$$

\begin{proposition}\label{Form-sing}
Theta-products give  examples of Jacobi forms
of singular weight with trivial character for
some lattices
of all even ranks $\geqslant 6$.
\end{proposition}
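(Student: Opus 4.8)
The plan is to assemble the required forms out of the three singular-weight theta-products already at hand, namely $\vartheta_{D_m}\in J_{\frac m2,D_m}(v_\eta^{3m})$ from \eqref{theta-Dm}, $\vartheta_{D_m(3)}\in J_{\frac m2,D_m(3)}(v_\eta^{m})$ from \eqref{theta-Dm3}, and $\sigma_{A_2}\in J_{1,A_2}(v_\eta^8)$ from \eqref{sigmaA2}. The engine is the tensor product of Proposition \ref{pr-prodJ}: for index-one forms it adds the weights and the ranks and multiplies the $\SL_2(\ZZ)$-characters, so that $v_\eta^{D_1}$ and $v_\eta^{D_2}$ combine to $v_\eta^{D_1+D_2}$. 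Since each factor is of singular weight (weight equal to half its rank), the tensor product again has weight equal to half of the total rank, hence is again of singular weight. By Proposition \ref{pr-index}.1 each of these index-one forms on an even lattice has trivial Heisenberg character, so the full character is trivial precisely when the total exponent of $v_\eta$ is divisible by $24$.

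The rank $6$ case is covered by $\sigma_{3A_2}=\sigma_{A_2}^{[3]}\in J_{3,3A_2}$, which has trivial character by Corollary \ref{singJ-A2} (its $\SL_2(\ZZ)$-character is $v_\eta^{24}=\id$). For even rank $\geqslant 8$ I would use the family
$$
\vartheta_{D_a}\otimes\vartheta_{D_b(3)}\in J_{\frac{a+b}2,\,D_a\oplus D_b(3)}\bigl(v_\eta^{3a+b}\bigr),
$$
with the convention that when $a=0$ or $b=0$ the corresponding factor is dropped. This is of singular weight $\frac{a+b}2$ for the even lattice $D_a\oplus D_b(3)$ of rank $a+b$, and its character is trivial as soon as $3a+b\equiv 0\bmod 24$.

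It then remains to check that, imposing $3a+b\equiv0\bmod24$ with $a,b\geqslant0$, the total rank $a+b$ takes every even value $\geqslant 8$. Writing $3a+b=24k$ with $k\geqslant1$ and $0\leqslant a\leqslant 8k$ gives $b=24k-3a\geqslant0$ and rank $a+b=24k-2a$; as $a$ runs over $0,1,\dots,8k$ this rank runs over every even integer in $[8k,24k]$. Because $8(k+1)\leqslant 24k$ for all $k\geqslant1$, consecutive intervals $[8k,24k]$ overlap and their union contains all integers $\geqslant 8$, so every even rank $\geqslant 8$ is realized. Together with the rank $6$ form $\sigma_{3A_2}$ this produces a singular-weight theta-product with trivial character for every even rank $\geqslant6$.

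There is no serious analytic obstacle: everything of substance is supplied by the earlier results. The only points requiring care are the bookkeeping of the $v_\eta$-exponent $3a+b$ (so that it is genuinely $\equiv 0\bmod 24$, killing the $\SL_2(\ZZ)$-character) together with the appeal to Proposition \ref{pr-index}.1 to eliminate the Heisenberg character, and the elementary covering argument showing that $a+b$ exhausts all even values $\geqslant 8$. For completeness one may also record, as in the introduction, the first instances $D_8$, $D_7\oplus D_3(3)$, $D_6\oplus D_6(3)$, $D_5\oplus D_9(3),\dots$, which are exactly the $k=1$ members of this family.
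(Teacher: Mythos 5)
Your proof is correct and takes essentially the same approach as the paper: both produce singular-weight forms with trivial character by tensoring $\sigma_{A_2}$, $\vartheta_{D_a}$ and $\vartheta_{D_b(3)}$ so that the $v_\eta$-exponents sum to $0\bmod 24$, with the Heisenberg part trivial by Proposition \ref{pr-index}. Your uniform family $D_a\oplus D_b(3)$ with $3a+b\equiv 0\bmod 24$ and the interval-covering argument is just a systematic packaging of the paper's explicit list of lattices of rank $6$ through $24$ together with its appeal to the mod-$24$ periodicity of the characters for larger ranks.
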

\begin{proof}
The corresponding Jacobi form are tensor products
of the following Jacobi theta-products
\begin{align*}
\sigma_{A_2}
&\quad {\rm with\  character\ } v_\eta^8,\qquad\qquad\qquad
\vartheta_{D_m}
\quad {\rm with\  character\ }v_\eta^{3m},\\
\vartheta^{(i)}_{D_4}&
\quad {\rm with\  character}\ v_\eta^{3m}\ (i=2,3),\quad
\vartheta_{2A_1}^{(1)}
\quad {\rm with\  character}\ v_\eta^{6},\\
&\qquad\qquad\qquad\qquad\vartheta_{D_m(3)}
\quad {\rm with\  character\ }  v_\eta^{m}.
\end{align*}
See \eqref{sigmaA2}, \eqref{theta-Dm}, \eqref{theta-D4a}, \eqref{theta-D4b},
\eqref{theta-D2} and \eqref{theta-Dm3}.
Below we give a list of lattices of rank smaller
or equal to $24$ since for larger ranks one can use the periodicity
of the characters:
$$
n=6,\ 3A_2;\quad
n=8,\  D_8,\ 2D_4,\   8A_1; \quad
n=10,\ D_7\oplus D_3(3),\ A_2\oplus D_4\oplus D_4(3);
$$
$$
n=12,\ D_6\oplus D_6(3),\ 2A_2\oplus D_8(3),\ 6A_2;\quad
n=14,\ D_5\oplus D_9(3);\quad
$$
$$
n=16,\ D_{16},\ D_4\oplus D_{12}(3);\quad n=18,\ D_3\oplus D_{15}(3);\qquad
n=20,\ D_2\oplus D_{20}(3);
$$
$$
n=22,\ D_1\oplus D_{21}(3);\quad
n=24,\ D_{24},\  12A_2,\ D_{24}(3).
$$
We note that we consider  $8A_1$ as $4(A_1\oplus A_1)$.
The corresponding Jacobi form is the product of four functions
of type $\vartheta_{2A_1}^{(1)}$.
Moreover instead of any $D_m$ in the list above we can put
a direct sum
$D_{m_1}\oplus\dots \oplus D_{m_k}$ with $m_1+\dots+m_k=m$.
\end{proof}

Jacobi forms of singular weight with respect to the full Jacobi
group of a lattice $L$ have a $\SL_2(\ZZ)$-character of type
$v_\eta^{2m}$ if the rank of $L$ is even (the singular weight
is integral) or a multiplier system of type $v_\eta^{2m+1}$
if the rank is odd  (the singular weight  is half-integral).
Analyzing the examples of theta-products given above
we get the following table of possible characters $v_\eta^m$
\begin{align*}
{\rm \bf rank}\ n&\quad d: {\rm \bf character\ of\  type\ } v_\eta^d\\
1&\quad 1,3\\
2&\quad 2,4,6,8\\
3&\quad 3,5,7,9,11\\
4&\quad 4,6,8,10,12,14,16\\
5&\quad 5,7,9,11,13,15,17,19\\
6&\quad 6,8,10,12,14,16,18,20,22,24\\
7&\quad 7,9,11,13,15,17,19,21,23,1,3\\
8&\quad 8,10,12,14,16,18,20,22,24,2,4,6.
\end{align*}
As a corollary we obtain
\begin{proposition}\label{sing-char}
If $n\geqslant8$ is even (respectively, $n\geqslant9$ is odd)
and $d\equiv n\bmod 2$
then there exists a lattice $L$ of rank $n$
such that the space of Jacobi forms of singular weight
$J_{\frac{n}2,L}(v_\eta^{d})$
is not empty.
\end{proposition}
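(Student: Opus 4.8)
The plan is to reduce the statement to the additivity of the tensor product together with a single base case at ranks $8$ and $9$. Since $v_\eta$ has order $24$, the character $v_\eta^d$ depends only on $d\bmod 24$, so it suffices to realize, at each rank, \emph{all} residues $d\bmod 24$ of the correct parity: the twelve even residues when $n\geqslant8$ is even, and the twelve odd residues when $n\geqslant9$ is odd. The engine is Proposition \ref{pr-prodJ}: the tensor product of two Jacobi forms adds their ranks, adds their weights, and multiplies their characters (hence adds the $\eta$-exponents modulo $24$). In particular a tensor product of two forms of singular weight is again of singular weight, because the singular weight $\tfrac{n}2$ is additive in $n$, and since all the index-one singular forms in question have trivial Heisenberg character the products stay in the spaces $J_{\frac n2,L}(v_\eta^d)$.

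First I would settle the two base ranks. The character table preceding the statement already records that rank $8$ realizes all twelve even residues modulo $24$. Concretely, the rank-$2$ theta-products realize the four consecutive even characters $v_\eta^{2},v_\eta^{4},v_\eta^{6},v_\eta^{8}$ (for instance $\vartheta_{D_1(3)}^{\otimes 2}$, $\vartheta_{D_1}\otimes\vartheta_{D_1(3)}$, $\vartheta_{2A_1}^{(1)}$ and $\sigma_{A_2}$, cf. \eqref{theta-Dm}, \eqref{theta-D2} and \eqref{sigmaA2}), and the sums of four exponents drawn from $\{2,4,6,8\}$ run through every even integer from $8$ to $32$, hence sweep out all twelve even residues. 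For rank $9$ I would tensor each of these rank-$8$ forms with the rank-one form $\vartheta_{D_1(3)}\in J_{\frac12,\latt{12}}(v_\eta)$ of character $v_\eta^{1}$ (see Example 1.9 and \eqref{theta-Dm3}); adding $1$ to every even residue produces all twelve odd residues, so rank $9$ realizes every odd character.

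The final step is a padding argument propagating the base cases to all larger ranks of the same parity. For even $n\geqslant8$ I would write $n=8+(n-8)$ with $n-8\geqslant0$ even, and fix any one singular-weight form $\psi$ of rank $n-8$: the constant $1$ when $n=8$, and otherwise $\sigma_{A_2}$ or $\vartheta_{D_4}$ for $n-8\in\{2,4\}$, or a form of Proposition \ref{Form-sing} for $n-8\geqslant6$. Its character $v_\eta^{d_0}$ has $d_0$ even. Tensoring $\psi$ with the rank-$8$ forms realizing all even characters yields, by character-additivity, rank-$n$ forms of singular weight whose characters run through $d_0+(\text{all even residues})$, which is again all even residues. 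The odd case is identical, using $n=9+(n-9)$ with $n-9\geqslant0$ even and the rank-$9$ forms. This gives the asserted nonemptiness for every admissible pair $(n,d)$.

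The genuinely substantive point, and the only real obstacle, is the base case: one must verify that rank $8$ hits \emph{all} twelve even residues and rank $9$ \emph{all} twelve odd residues. This is sharp, since for smaller ranks it fails (in the table rank $6$ misses $v_\eta^{2},v_\eta^{4}$, and rank $7$ misses $v_\eta^{5}$), which is precisely why the thresholds are $8$ and $9$. Once the two base cases are established, the additivity of the tensor product makes the propagation to all larger ranks purely formal.
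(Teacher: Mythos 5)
Your proof is correct and takes essentially the same route as the paper: the paper's own (very terse) proof is the character table preceding the proposition, which is compiled exactly as you do --- by tensoring the index-one theta-products $\vartheta_{D_1}$, $\vartheta_{D_1(3)}$, $\vartheta_{2A_1}^{(1)}$, $\sigma_{A_2}$ via Proposition \ref{pr-prodJ} --- together with the same periodicity-of-characters padding to reach all ranks $n\geqslant 8$ (resp.\ $n\geqslant 9$). Your write-up simply makes explicit the base cases at ranks $8$ and $9$ and the propagation step that the paper compresses into ``as a corollary we obtain''.
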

\noindent
{\bf Remark.} For some $n$ we can prove that the table above
contains all possible characters. We are planning to come to this question
in another publication.
\smallskip

Now we would like to analyze Jacobi forms of critical weight.
First we note that the multiplication by $\eta$ gives us
the simplest such Jacobi form
\begin{equation}\label{J-D23}
\eta\,\vartheta_{D_{23}(3)}
\in J_{12,D_{23}(3)}.
\end{equation}
The tensor product of a Jacobi form of singular weight
and Jacobi form of critical weight has critical weight
for the corresponding lattice.
In particular there exist two simple series of Jacobi cusp forms
with trivial character for the lattices $A_m\oplus D_n$ where $m$ is even
and $m+n\equiv 7\mod 8$
\begin{equation}\label{J-AmDn}
\vartheta_{A_m\oplus D_n}=
\vartheta_{A_m}(\tau,\ffZ_m)\otimes \vartheta_{D_n}(\tau,\ffZ_n)
\in J^{cusp}_{(m+n+1)/2,\, A_m\oplus D_n},
\end{equation}
\begin{equation}\label{J-AmD3n}
\vartheta_{A_m\oplus D_{3n}(3)}=
\vartheta_{A_m}(\tau,\ffZ_m)\otimes \vartheta_{D_{3n}(3)}(\tau,\ffZ_{3n})
\in J^{cusp}_{(m+3n+1)/2,\, A_m\oplus D_{3n}(3)}.
\end{equation}
In particular we get  examples of Jacobi cusp forms
of weight one with character in one abelian variable.
The simplest examples of such forms can be found in
\cite{GN2} (see also \cite{GH2} where many different cusp  theta-products
of small weights were considered):
$$
\eta(\tau)\vartheta_{3/2}(\tau,2z)\in J_{1,D_1(3)}(v^2_\eta),
\qquad
\eta(\tau)\vartheta(\tau,2z)\in J_{1,D_1}(v^4_\eta).
$$
To get more interesting examples we take the pullback
of $\sigma_{A_2}$.
We consider $A_2$ as the sublattice $v^\perp_{D_3}$
where $v=2(1,1,1)$. Let $u=2(u_1,u_2,u_3)\in D_3$.
Let $u_a$ be the projection of $u$ on $A_2^\vee$,
i.e. $u=u_a+u_v$ where $u_a\in \latt{v^\perp}^\vee=A_2^\vee$
and $u_v\in \latt{v^\vee}=\latt{\frac{v}{12}}$.
We put
$\sigma_{A_2}|_u:=\sigma_{A_2}|_{(u_a)^\perp_{A_2}}$.

\begin{proposition}\label{Pb-A2}
Let $u=2(u_1,u_2,u_3)\in D_3$ such that
$u_i\ne u_j$ and $u_1+u_2+u_3\not\equiv 0\bmod 3$.
Then $\sigma_{A_2}|_u$ is a Jacobi cusp form of critical
weight $1$ with character $v_\eta^8$.
\end{proposition}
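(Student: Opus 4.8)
The plan is to apply the pullback construction of Proposition \ref{prop-pback} to the singular-weight form $\sigma_{A_2}\in J_{1,A_2}(v_\eta^8)$ of Corollary \ref{singJ-A2}, and then to read off nonvanishing and cuspidality from the position of the line $(u_a)^\perp$ relative to the $A_2$ root system. Since $M=(u_a)^\perp_{A_2}$ has rank one, Proposition \ref{prop-pback} immediately gives that $\sigma_{A_2}|_u$ is a Jacobi form of weight $1$ -- the critical weight for a rank-one lattice -- and character $v_\eta^8$. It therefore remains to show that the pullback is not identically zero and that it is cuspidal. I would translate each of these into a statement about the direction of $u_a$, working in the model $A_2=\{x\in\ZZ^3:x_1+x_2+x_3=0\}\subset D_3$ with $v=2(1,1,1)$, where $u_a=\operatorname{pr}_{A_2}(u)=\tfrac23(3u_1-s,3u_2-s,3u_3-s)$ and $s=u_1+u_2+u_3$.

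For nonvanishing I would exploit the product shape of $\sigma_{A_2}$. Writing the three theta-factors as $\vartheta(\tau,(\ffZ,w_j))$ with $w_j=\operatorname{pr}_{A_2}(e_j)=\tfrac13(2,-1,-1),\dots$ the fundamental weights, a restriction of $\ffZ$ to a line $\ell$ annihilates one factor (via $\vartheta(\tau,0)=0$) exactly when $\ell\perp w_j$ for some $j$, i.e. when $\ell$ is one of the three root lines $\QQ(e_i-e_j)$. Hence $\sigma_{A_2}|_u\equiv0$ iff $(u_a)^\perp$ is a root line, i.e. iff $u_a$ is proportional to a fundamental weight such as $(1,1,-2)$. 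A direct computation of $u_a$ shows $u_a\parallel(1,1,-2)\iff u_1=u_2$, and cyclically, so $u_a$ is a weight direction iff two of the $u_i$ coincide. The hypothesis $u_i\ne u_j$ thus guarantees both $u_a\ne0$ (so $M$ really has rank one) and $\sigma_{A_2}|_u\not\equiv0$.

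For cuspidality I would invoke Corollary \ref{col-ordM}: it suffices that $\operatorname{pr}_{(M^\perp)^\vee}(l)\ne0$, equivalently $(l,u_a)\ne0$, for every Fourier index $l$ with $f(n,l)\ne0$. Because the character is $v_\eta^8$ (so $n\equiv\tfrac13\bmod\ZZ$) and the weight is singular (so $(l,l)=2n$), every index satisfies $(l,l)\equiv\tfrac23\bmod2\ZZ$; as $A_2$ is even this forces $l\in A_2^\vee\setminus A_2$, so the support lies off $A_2$. It then remains only to check that the line $(u_a)^\perp$ meets $A_2^\vee$ inside $A_2$, i.e. that it carries no nontrivial class of $D(A_2)=A_2^\vee/A_2\cong\ZZ/3\ZZ$. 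The nonzero classes are represented by the weight vectors $\tfrac13(2,-1,-1),\dots$, which lie on the weight lines; such a weight line equals $(u_a)^\perp$ iff $u_a$ is a root direction, and $u_a\parallel(e_i-e_j)$ forces $u_i+u_j=2u_k$, hence $u_1+u_2+u_3\equiv0\bmod3$. The hypothesis $u_1+u_2+u_3\not\equiv0\bmod3$ therefore prevents $(u_a)^\perp$ from being a weight line, so $(l,u_a)\ne0$ on the whole support; by Corollary \ref{col-ordM} the pullback is cuspidal with $\operatorname{Ord}>0$. Together with the previous step this yields a nonzero cusp form of critical weight $1$ and character $v_\eta^8$.

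The main obstacle is the bookkeeping that determines exactly which lines carry the zeros of $\sigma_{A_2}$ (the root lines) and which carry the nontrivial discriminant classes (the weight lines), and then matches these two a priori different geometric conditions to the two arithmetic hypotheses. The delicate point is that the theta-arguments $z_1,z_2,z_1+z_2$ are pairings of $\ffZ$ with fundamental weights rather than with roots, so the vanishing of $\sigma_{A_2}$ sits on the root lines while cuspidality is governed by the weight lines; keeping the projection $u_a$ and these two dual families of lines straight is where the care lies. Once the identification $u_a\parallel\text{weight}\iff u_i=u_j$ and $u_a\parallel\text{root}\iff u_i+u_j=2u_k$ is established, everything else is the routine computation of $u_a$ and of $A_2^\vee\cap\ell$ for a line $\ell$.
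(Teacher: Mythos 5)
Your reduction is the right one, and most of it matches the paper: Proposition \ref{prop-pback} gives the weight and character; your non-vanishing argument (a theta factor dies on the line $(u_a)^\perp$ only when that line is a root line, which happens iff two of the $u_i$ coincide) is correct and in fact more explicit than the paper, which merely asserts non-vanishing; and your observation that the whole Fourier support of the singular-weight form $\sigma_{A_2}$ lies on the null cone $2n=(l,l)$ and in the nontrivial classes of $D(A_2)$ is exactly the right reduction of cuspidality. The gap is the key step that follows: from ``the nonzero classes are represented by the weight vectors'' you infer that a line carrying a nontrivial class of $D(A_2)$ must be one of the three weight lines. That containment is false. A nontrivial class is an entire coset $w+A_2$, whose elements point in infinitely many directions, and the singular support of $\sigma_{A_2}$ is likewise not confined to the weight lines. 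For instance $\lambda=\tfrac13(-4,5,-1)\equiv\tfrac13(-1,2,-1)\bmod A_2$ lies in a nontrivial class and on no weight line, and it genuinely occurs in the singular support: with $v=2(1,1,1)$ one has $\lambda-\tfrac{v}{12}=\tfrac12(-3,3,-1)$, all coordinates odd halves, so the corresponding coefficient of $\sigma_{A_2}$ is $\pm1$.

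This breaks your argument, not just its wording: take $u=2(0,1,5)$, whose entries are distinct and for which $u_a\parallel(2,1,-3)$ is not a root direction, so $(u_a)^\perp=\QQ(-4,5,-1)$ is not a weight line. Your chain of implications would certify $\sigma_{A_2}|_u$ as a cusp form; but $\lambda$ above lies on $(u_a)^\perp$, and the resulting norm-zero Fourier coefficient of the pullback is $\pm1\neq0$, so this pullback is \emph{not} cuspidal. (Here $u_1+u_2+u_3=6\equiv0\bmod3$, so the proposition itself is safe --- what fails is precisely your arrow ``not a weight line $\Rightarrow$ no support on the line''.) The repair must use the hypothesis $3\nmid(u_1+u_2+u_3)$ directly against integrality, which is what the paper does: a norm-zero coefficient of $\sigma_{A_2}|_u$ forces a singular-support index $l_a$ with $(l_a,u_a)=(l_a,u)=0$; writing $l_a=l^\vee\mp\tfrac{v}{12}$ with $l^\vee=\tfrac12(l_1,l_2,l_3)$, $l_i$ odd, one gets $(l^\vee,u)=\pm\tfrac{(v,u)}{12}=\pm\tfrac{u_1+u_2+u_3}{3}$, whereas $(l^\vee,u)=\sum_i l_iu_i\in\ZZ$ --- impossible. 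Equivalently, in your intrinsic language the true lemma is: $(u_a)^\perp$ meets $A_2^\vee\setminus A_2$ iff the primitive integer vector spanning it, proportional to $(u_2-u_3,\,u_3-u_1,\,u_1-u_2)$, has all coordinates congruent $\bmod\ 3$; since the pairwise differences of these coordinates are $\equiv\pm(u_1+u_2+u_3)\bmod3$, this forces $u_1+u_2+u_3\equiv0\bmod3$. Either argument closes the gap; the root/weight-line dichotomy cannot.
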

\begin{proof}
We note first that if $u_i\ne u_j$ then the pullback
$\sigma_{A_2}|_u$ is not equal to  zero identically.
According to the proof of Proposition \ref{prop-pback}
and Corollary \ref{singJ-A2} the Fourier expansion
of Jacobi  form $\sigma_{A_2}$ of singular weight $1$
has the following form
$$
\sigma_{A_2}(\tau, \ffZ_2)=
\sum_{\substack{n>0,\, l_a\in A_2^\vee
\vspace{0.5\jot}\\
2n-(l_a,l_a)=0\vspace{0.5\jot} \\
l_a\pm v^\vee\in \frac{1}2\ZZ^3}}
f(n, l_a)\,  e^{2\pi i({n}\tau+(l_a,\ffZ_2))}.
$$
More exactly, in the last summation we have
$l^\vee=l_a\pm \frac {v}{12}=\frac{1}{2}(l_1,l_2,l_3)$
with odd $l_i$ because the division by $\eta$
does not change the $\ffZ_2$-part of $\vartheta_{A_2}$.
Let $u_a$ be the projection of $u$ on $A_2$,
i.e. $u=u_a+u_v$ where $u_a\in \latt{v^\perp}^\vee=A_2^\vee$
and $u_v\in \latt{v^\vee}=\latt{\frac{v}{12}}$.
We have to analyze the Fourier expansion of $\sigma_{A_2}|_u$.
As  in the proof of Proposition \ref{prop-pback}
we put $l_a=l_u\oplus l_\perp$ where $(l_u, u_a)=0$.
If the hyperbolic norm of the index of a Fourier coefficient
$f_u(n,l_u)$ of $\sigma_{A_2}|_{u}$
is equal to zero then $l_\perp=0$. Therefore
$(l_u,u_a)=(l_a,u_a)=(l_a,u)=0$ and
$$
(l^\vee,u)=\pm \frac{(u,v)}{12}=\pm \frac{u_1+u_2+u_3}{3}\in \ZZ.
$$
The last inclusion is not possible. Thus
the pullback $\sigma_{A_2}|_u$ is a cusp form.
\end{proof}
We note that the Jacobi form in Proposition 3.8 is a
classical Jacobi form of type \cite{EZ}.
We give its more explicit form in the next
\begin{corollary}\label{quarks}
Let $a,b\in \ZZ_{>0}$. The following function,
called theta-quark,
$$
\theta_{a,b}(\tau,z)=
\frac{\vartheta(\tau, az)\vartheta(\tau, bz)\vartheta(\tau, (a+b)z)}
{\eta(\tau)}
\in J_{1,A_1;a^2+ab+b^2}(v_\eta^8)
$$
is holomorphic Jacobi form of Eichler--Zagier type of weight $1$,
index $(a^2+ab+b^2)$  and character $v_\eta^8$.
This is a Jacobi cusp form if $a\not \equiv b\mod 3$.
\end{corollary}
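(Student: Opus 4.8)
The plan is to realize $\theta_{a,b}$ as a pullback of the singular-weight form $\sigma_{A_2}$ of Corollary~\ref{singJ-A2} and then to apply Proposition~\ref{Pb-A2}. Substituting $z_1=az$, $z_2=bz$ into $\sigma_{A_2}(\tau,z_1,z_2)=\vartheta(\tau,z_1)\vartheta(\tau,z_2)\vartheta(\tau,z_1+z_2)/\eta(\tau)$ turns the three arguments into $az,bz,(a+b)z$, so that $\theta_{a,b}(\tau,z)=\sigma_{A_2}(\tau,az,bz)$. In lattice language this is precisely the restriction of $\sigma_{A_2}$ to the rank-one sublattice $M=\latt{w}\subset A_2$, where $w$ is the vector of coordinates $(a,b)$ in the basis in which $A_2$ has Gram matrix $\left(\begin{smallmatrix}2&1\\1&2\end{smallmatrix}\right)$.

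First I would read off the weight, character and index. By Proposition~\ref{prop-pback} the pullback is again a Jacobi form of the same weight $1$ and the same $\SL_2(\ZZ)$-character $v_\eta^8$, because restricting $\ffZ$ to the line $\latt{w}$ touches only the abelian variable and not the modular variable $\tau$. For the index I would use the $\SL_2$-automorphy factor: $(w,w)_{A_2}=2a^2+2ab+2b^2=2N$ with $N=a^2+ab+b^2$, so the restricted form has index $1$ for the lattice $\latt{2N}=A_1(N)$, which by Proposition~\ref{pr-index} is the same as index $N$ for $A_1=\latt2$. This yields $\theta_{a,b}\in J_{1,A_1;N}(v_\eta^8)$, giving the weight, index and character asserted.

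The cuspidality is where Proposition~\ref{Pb-A2} is invoked, and matching its hypotheses to $(a,b)$ is the main step. I would exhibit the vector $u\in D_3$ whose associated pullback direction $(u_a)^\perp_{A_2}$ is exactly $\latt{w}$. Solving $u_a\perp w$ inside $A_2=v^\perp_{D_3}$ with $v=2(1,1,1)$, one checks that $u=2(b,-a,0)$ works: its $A_2$-projection is proportional to $(a+2b,-(2a+b),a-b)$, and this is orthogonal to $w=(a,b,-(a+b))$. With this choice the two hypotheses of Proposition~\ref{Pb-A2} become transparent conditions on $(a,b)$: the requirement $u_i\ne u_j$ reduces to $a,b>0$ (which also forces $\theta_{a,b}\not\equiv0$), and the arithmetic condition $u_1+u_2+u_3=b-a\not\equiv0\bmod 3$ is exactly $a\not\equiv b\bmod 3$. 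Proposition~\ref{Pb-A2} then gives cuspidality under this congruence.

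I expect the real obstacle to be bookkeeping rather than ideas: one must verify that, under the identification $\sigma_{A_2}=-\vartheta_{D_3}|_{v^\perp}$ (using $\vartheta(\tau,-(z_1+z_2))=-\vartheta(\tau,z_1+z_2)$), the substitution $(z_1,z_2)=(az,bz)$ is literally the pullback $\sigma_{A_2}|_u$ for the $u$ above, with the sign conventions not disturbing the admissible index set. Should I prefer to bypass Proposition~\ref{Pb-A2}, I could argue directly from Corollary~\ref{col-ordM}: since $\sigma_{A_2}$ has singular weight, ${\rm Ord}(\theta_{a,b})$ equals the least norm of the projection $l_\perp$ of an admissible index $l$ onto $(M^\perp)^\vee$, and cuspidality amounts to $l_\perp\ne0$ for every nonzero Fourier coefficient; tracing which admissible $l$ can be proportional to $w$ leads back to the same congruence $a\not\equiv b\bmod 3$. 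Either route reduces cuspidality to this mod-$3$ condition, while the index $a^2+ab+b^2$ is already fixed by the computation above.
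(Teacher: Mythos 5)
Your proposal follows exactly the paper's route: the paper's entire proof is ``We can assume that $a$ and $b$ are coprime. We obtain this function as $\sigma_{A_2}|_u$ for $u=2(b,-a,0)$,'' i.e.\ the same vector $u$ and the same appeal to Proposition \ref{Pb-A2}, whose hypotheses you translate into $a,b>0$ and $a\not\equiv b\bmod 3$ just as intended; your index computation $(w,w)=2(a^2+ab+b^2)$ supplies bookkeeping the paper leaves implicit.

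The one point you miss is the normalization the paper states first: the identification ``$(u_a)^\perp_{A_2}$ is exactly $\latt{w}$'' is true only when $\gcd(a,b)=1$. If $d=\gcd(a,b)>1$, the vector $w=(a,b,-(a+b))$ is imprimitive and $\latt{w}$ has index $d$ in the primitive rank-one lattice $(u_a)^\perp_{A_2}$, so $\theta_{a,b}(\tau,z)$ is not $\sigma_{A_2}|_u$ itself but its rescaling $\theta_{a/d,\,b/d}(\tau,dz)$. This is harmless --- rescaling the abelian variable multiplies the index by $d^2$ (consistent with $a^2+ab+b^2=d^2((a/d)^2+(a/d)(b/d)+(b/d)^2)$) and preserves cuspidality, and $a\not\equiv b\bmod 3$ forces $3\nmid d$ and $a/d\not\equiv b/d\bmod 3$ --- but the reduction must be said, which is exactly what the paper's opening sentence does.
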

\begin{proof}We can assume that $a$ and $b$ are coprime.
We obtain this function as $\sigma_{A_2}|_u$  for $u=2(b,-a,0)$.
\end{proof}
\noindent
{\bf Remark.} The Jacobi form $\theta_{a,b}$ was proposed
by the second author many years ago in his talks on canonical
differential forms on Siegel modular three-folds.
The Jacobi forms of similar types, called {\it theta-blocks},
are studied in the paper \cite{GSZ} where
the Fourier expansion of theta-quark $\theta_{a,b}$
is found explicitly.
The method of the proof of Proposition \ref{Pb-A2}
can be used for other Jacobi forms when one takes a pullback
on a sublattice of co-rank  $2$.

Propositions \ref{Ptheta-Am} and  \ref{Pb-A2} give
a method to pass from Jacobi forms of singular weight
to Jacobi forms of critical weight. We have noticed that
the tensor product of Jacobi forms of singular and critical
weights is a form of critical weight. In some cases we can divide
some  products of two forms of critical weight by $\eta$
(see Corollary \ref{critJ-2A4}).
We  can control that the obtained Jacobi form is a cusp (or non-cusp)
form. Analyzing  the table of characters before
Proposition \ref{sing-char} we obtain
\begin{proposition}\label{crit-char}
If $n\geqslant7$ is odd (respectively, $n\geqslant8$ is even)
and $d\equiv n+1\bmod 2$
then there exists a lattice $L$ of rank $n$
such that the space of Jacobi forms of critical  weight
$J^{(cusp)}_{\frac{n+1}2,L}(v_\eta^{d})$ is not empty.
\end{proposition}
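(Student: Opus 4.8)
The plan is to reduce the critical--weight statement to the singular--weight statement of Proposition \ref{sing-char} (together with the explicit character table preceding it) by multiplying by the Dedekind $\eta$--function. Recall that $\eta$ is a Jacobi form of weight $\frac12$, index $0$ and character $v_\eta$ for the trivial lattice, and that $\eta(\tau)=q^{1/24}(1+q(\cdots))$. Hence if $\varphi\in J_{\frac n2,L}(v_\eta^{d-1})$ is a Jacobi form of singular weight for an even lattice $L$ of rank $n$ (so ${\rm Ord}(\varphi)=0$), then $\eta\varphi\in J_{\frac{n+1}2,L}(v_\eta^{d})$ has critical weight and, exactly as in the proof of Corollary \ref{singJ-A2},
$$
{\rm Ord}(\eta\,\varphi)={\rm Ord}(\varphi)+\tfrac1{12}=\tfrac1{12}>0,
$$
so that $\eta\varphi$ is automatically a cusp form. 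The $\SL_2(\ZZ)$--character is shifted from $v_\eta^{d-1}$ to $v_\eta^{d}$, and the parity is consistent: a singular--weight character satisfies $d-1\equiv n\bmod 2$, hence $d\equiv n+1\bmod 2$, which is precisely the admissible parity for critical weight $\frac{n+1}2$.

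Consequently the map $\varphi\mapsto\eta\varphi$ turns the singular--weight character table into the critical--weight one shifted by $+1$. The first step is therefore to invoke Proposition \ref{sing-char}: for $n\geqslant 8$ even and for $n\geqslant 9$ odd every character $v_\eta^{d-1}$ of the correct parity is realized by a rank--$n$ lattice carrying a singular--weight Jacobi form, and multiplying by $\eta$ realizes every critical--weight character $v_\eta^{d}$ with $d\equiv n+1\bmod 2$ by a cusp form. This settles all even $n\geqslant 8$ and all odd $n\geqslant 9$.

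The only case not covered by Proposition \ref{sing-char} but appearing in the critical range $n\geqslant7$ (odd) is $n=7$. Here I would read off the rank--$7$ row of the character table preceding Proposition \ref{sing-char}: singular--weight theta--products realize exactly the characters $v_\eta^{d'}$ with $d'\in\{1,3,7,9,11,13,15,17,19,21,23\}$ (the value $d'=5$ being absent). After multiplication by $\eta$ these give critical--weight cusp forms with every even character $v_\eta^{d}$ except $d=6$. It then remains to construct a rank--$7$ Jacobi form of critical weight $4$ and character $v_\eta^{6}$. The rank--$8$ row of the table does contain $v_\eta^{6}$; concretely $\vartheta_{2A_1}^{(1)}\otimes\sigma_{3A_2}\in J_{4,2A_1\oplus 3A_2}(v_\eta^{6})$ is a singular--weight form of rank $8$ with this character. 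Its pullback to a codimension--one sublattice $v^\perp$ (Proposition \ref{prop-pback}) again has character $v_\eta^{6}$ and weight $4$, which is critical for rank $7$; by Corollary \ref{col-ordM} it is a cusp form as soon as $v$ is chosen so that ${\rm pr}_{\latt{v}^\vee}(l)\neq0$ for every Fourier index $l$ lying on the light cone $2n-(l,l)=0$.

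The hard part will be exactly this last point: exhibiting a vector $v$ for which the orthogonal projections ${\rm pr}_{\latt{v}^\vee}(l)$ of all singular Fourier indices of $\vartheta_{2A_1}^{(1)}\otimes\sigma_{3A_2}$ are non--zero, so that cuspidality is forced while the character $v_\eta^6$ at rank $7$ is retained. This is the same mechanism used in Propositions \ref{Ptheta-Am} and \ref{Pb-A2}, and I expect a $v$ with suitably chosen, pairwise distinct coordinates to work. If one only requires non--emptiness of $J_{4,L}(v_\eta^6)$ rather than cuspidality, then the tensor product $\vartheta_{A_1}\otimes\sigma_{3A_2}\in J_{4,A_1\oplus 3A_2}(v_\eta^{6})$ (with $\vartheta_{A_1}=\vartheta(\tau,z)^2\in J_{1,A_1}(v_\eta^6)$) already settles the exceptional case $n=7$, $d=6$ directly.
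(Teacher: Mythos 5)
Your main device---multiplying a singular-weight Jacobi form by $\eta$---is correct and genuinely different from the paper's construction. The paper produces critical-weight cusp forms by pulling singular-weight theta-products back to corank-one sublattices (Propositions \ref{Ptheta-Am} and \ref{Pb-A2}) and then tensoring the resulting critical cusp forms with singular-weight forms from the character table; your observation that ${\rm Ord}(\eta\varphi)={\rm Ord}(\varphi)+\tfrac1{12}>0$, so that $\eta\varphi$ is automatically a \emph{cusp} form of critical weight with $\SL_2(\ZZ)$-character shifted by one, replaces that machinery wholesale in every case covered by Proposition \ref{sing-char}, i.e.\ all even $n\geqslant8$ and all odd $n\geqslant9$. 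You also correctly isolate $(n,d)=(7,6)$ as the unique case not reachable this way, since the rank-$7$ row of the table misses $v_\eta^5$.

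That last case, however, is a genuine gap in your proof of the cuspidal statement, which is the substantive content here (critical weight is precisely the first weight where cusp forms can occur, and the paper explicitly tracks cuspidality). Your proposed pullback of $\vartheta^{(1)}_{2A_1}\otimes\sigma_{3A_2}$ is left unverified, and the verification is not a formality: writing the singular Fourier indices $l$ of $\vartheta^{(1)}_{2A_1}$ via $(l,\ffZ)=\tfrac{a+b}{2}z_1+\tfrac{a-b}{2}z_2$ with $a,b$ odd, the choices $v=e_1\pm e_2$ in the $2A_1$ factor make the pullback vanish identically (one theta factor restricts to $\vartheta(\tau,0)=0$), while $v=e_1+2e_2$ fails cuspidality because $(l,v)=\tfrac{3a-b}{2}=0$ for $a=1$, $b=3$, which carries a nonzero coefficient; and your fallback $\vartheta^2\otimes\sigma_{3A_2}$ is not a cusp form, since $\vartheta^2$ has singular indices (e.g.\ the $q^{1/4}r$ term). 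The gap closes along your own lines with the right choice: take $v=e_1+3e_2$; then $(l,v)=2a-b$ is odd, hence nonzero, for every singular index, so Corollary \ref{col-ordM} gives cuspidality, and on $(\ffZ,v)=0$ the form restricts to $\vartheta(\tau,2z)\vartheta(\tau,4z)\otimes\sigma_{3A_2}\not\equiv0$, a critical-weight cusp form on the rank-$7$ lattice $\latt{20}\oplus3A_2$ with character $v_\eta^6$. Alternatively, closer to the paper's mechanism, tensor the cuspidal theta-quark $\theta_{1,2}\in J^{cusp}_{1,A_1;7}(v_\eta^{8})$ of Corollary \ref{quarks} with the rank-$6$ singular form $\sigma_{A_2}^{[2]}\otimes\vartheta^{(1)}_{2A_1}\in J_{3,2A_2\oplus2A_1}(v_\eta^{22})$, giving character $v_\eta^{30}=v_\eta^{6}$ in rank $7$.
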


The analogue of Proposition \ref{Form-sing} is the following
\begin{proposition}\label{Form-crit}
Theta-products give  examples of Jacobi cusp  forms
of critical  weight with trivial character for
some lattices of all odd ranks $\geqslant5$.
\end{proposition}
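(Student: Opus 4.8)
The plan is to produce, for each odd $n_0\geqslant5$, a rank-$n_0$ lattice carrying a Jacobi cusp form of critical weight $\frac{n_0+1}2$ with trivial character, by writing it as a tensor product of one rank-one cusp form of critical weight with a singular-weight form on an even lattice of rank $n_0-1$. The weight bookkeeping is then automatic: if $\psi$ has singular weight $\frac{n_1}2$ on a lattice of (even) rank $n_1$ and $\kappa$ is a cusp form of critical weight $1$ on a rank-one lattice, then by Proposition \ref{pr-prodJ} the product $\kappa\otimes\psi$ has weight $1+\frac{n_1}2=\frac{(n_1+1)+1}2$, which is exactly the critical weight for rank $n_1+1=n_0$, and it is a cusp form because $\kappa$ is. The only substantial point is to arrange the $\SL_2(\ZZ)$-character of the product to be trivial, i.e. to make the total power of $v_\eta$ divisible by $24$.

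For the rank-one critical cusp factor I would take the theta-quark
$$
\theta_{1,2}(\tau,z)=\frac{\vartheta(\tau,z)\vartheta(\tau,2z)\vartheta(\tau,3z)}{\eta(\tau)}\in J^{cusp}_{1,A_1;7}(v_\eta^8),
$$
which is a cusp form of critical weight since $1\not\equiv2\bmod3$ (Corollary \ref{quarks}); by Proposition \ref{pr-index} it is an index-one form for the even lattice $\latt{14}=A_1(7)$, with trivial Heisenberg character and $\SL_2(\ZZ)$-character $v_\eta^8$. For the singular partner I would use that for every even $n_1=n_0-1\geqslant4$ there is a rank-$n_1$ lattice $L_1$ carrying a nonzero Jacobi form $\psi\in J_{n_1/2,L_1}(v_\eta^{16})$ of singular weight with trivial Heisenberg character: this is the table of admissible characters preceding Proposition \ref{sing-char} for $n_1=4$ and $n_1=6$ (both lists contain $16$; for instance $\sigma_{A_2}^{[2]}\in J_{2,2A_2}(v_\eta^{16})$ when $n_1=4$, and $\vartheta_{D_4}\otimes\vartheta_{D_1}\otimes\vartheta_{D_1(3)}\in J_{3,D_4\oplus D_1\oplus D_1(3)}(v_\eta^{16})$ when $n_1=6$), and Proposition \ref{sing-char} itself for all even $n_1\geqslant8$ (the value $d=16$ being even).

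Assembling these, Proposition \ref{pr-prodJ} yields
$$
\theta_{1,2}\otimes\psi\in J^{cusp}_{(n_0+1)/2,\,\latt{14}\oplus L_1}(v_\eta^{24}),
$$
and since $v_\eta^{24}=1$ while both Heisenberg characters are trivial, this is a Jacobi cusp form of critical weight with trivial character on a lattice of rank $n_0$, valid for all odd $n_0\geqslant5$ at once. As an alternative presentation closer to the proof of Proposition \ref{Form-sing}, one could give an explicit finite list of lattices for small ranks and invoke the period-$24$ behaviour of $v_\eta$ to cover larger ranks.

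I expect the main obstacle to be precisely this character bookkeeping at the two lowest ranks $n_0=5,7$ (that is $n_1=4,6$), because Proposition \ref{sing-char} only guarantees singular forms of prescribed character from rank $8$ onward; here one must fall back on the explicit table (or exhibit the concrete theta-products above) to certify that $v_\eta^{16}$ is realizable. Everything else—the critical weight, the cusp property, and the triviality of the Heisenberg part—follows formally from Propositions \ref{pr-prodJ} and \ref{pr-index}, so no genuinely new estimate is needed beyond what has already been established.
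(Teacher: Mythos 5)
Your proposal is correct, but it proceeds in the opposite direction from the paper. The paper's proof goes \emph{down} in rank: it takes the trivial-character singular-weight theta-products of Proposition \ref{Form-sing} (even ranks $\geqslant 6$) and applies the pullback operation of Proposition \ref{Ptheta-Am} along a suitable vector $v^\perp$, which drops the rank by one, raises the weight from singular to critical relative to the new lattice, produces a cusp form, and -- crucially -- leaves the (trivial) character untouched, so no character bookkeeping is needed at all. You instead go \emph{up} in rank: you tensor a rank-one critical cusp form, the theta-quark $\theta_{1,2}\in J^{cusp}_{1,A_1;7}(v_\eta^8)$ viewed via Proposition \ref{pr-index} as an index-one form on $\latt{14}$, with a singular-weight form of rank $n_0-1$ and character exactly $v_\eta^{16}$, so that the total character becomes $v_\eta^{24}=\id$. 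This forces you to certify that $v_\eta^{16}$ is realizable at every even rank $\geqslant 4$, which you do correctly: explicitly at ranks $4$ and $6$ (your examples $\sigma_{A_2}^{[2]}\in J_{2,2A_2}(v_\eta^{16})$ and $\vartheta_{D_4}\otimes\vartheta_{D_1}\otimes\vartheta_{D_1(3)}$, with character $v_\eta^{12+3+1}=v_\eta^{16}$ and weight $3$, both check out), and via Proposition \ref{sing-char} for even ranks $\geqslant 8$. Your route buys explicit orthogonal-sum lattices $\latt{14}\oplus L_1$ and a purely formal assembly from Propositions \ref{pr-prodJ}, \ref{pr-index} and Corollary \ref{quarks}, at the cost of the mod-$24$ character balancing; the paper's route buys complete freedom from character considerations, at the cost of landing on less standard lattices of the form $v^\perp_{D_m}\oplus(\mathrm{rest})$. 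Both are valid proofs of the statement.
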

\begin{proof}
The corresponding Jacobi forms of critical weight are
pullbacks (see Proposition \ref{Ptheta-Am})
of Jacobi forms of singular weight of Proposition \ref{Form-sing}.
One can also use  $\vartheta_{A_m(3)}$ instead of
$\vartheta_{A_m}$ in theta-products.
\end{proof}

\bibliographystyle{plain}

\begin{thebibliography}{DMVV}


\bibitem[B1]{B1}
R.E.~Borcherds, {\it Automorphic forms on $O_{s+2,2}(R)$
and infinite products.}
Inventiones Math. {\bf 120} (1995), 161--213.

\bibitem[B2]{B2} R.E.~Borcherds,
{\it The moduli space of Enriques surfaces and the fake monster Lie
superalgebra.} Topology {\bf 35} (1996), 699--710.


\bibitem[B3]{B3} R. E.~Borcherds,
{\it Automorphic forms with singularities on Grassmannians}.
Inventiones Math. {\bf 132} (1998), 491--562.

\bibitem[Bou]{Bou} N.~Bourbaki, {\it Groupes et alg\`ebres de Lie.
    Chapitre IV: Groupes de Coxeter et syst\`emes de Tits. Chapitre V:
    Groupes engendr\'es par des r\'eflexions. Chapitre VI: syst\`emes
    de racines.} \'El\'ements de math\'ematique. Fasc. XXXIV.
  Actualit\'es Scientifiques et Industrielles, No. 1337 Hermann,
  Paris, 1968.

\bibitem[CG]{CG}
F.~Cl\'ery, V.~Gritsenko,
{\it Siegel modular forms of genus $2$ with the simplest divisor.}
Proc. London Math. Soc. {\bf 102} (2011),  1024--1052.

\bibitem[EZ]{EZ} M.~Eichler, D.~Zagier, {\it The theory of Jacobi
forms}. Progress in Mathematics {\bf 55}. Birkh\"auser, Boston,
Mass., 1985.


\bibitem[FS-M]{FS-M} E.~Freitag, R.~Salvati Manni,
{\it Some Siegel threefolds with a Calabi-Yau model}.
ArXiv: 0905.4150, 18 pp.

\bibitem[D]{D}
C.~Desreumaux,
{\it Construction de formes automorphes r\'eflectives sur un espace de dimension $4$}.
J. Th\'eor. Nombres Bordeaux {\bf 18} (2006), 89--111.

\bibitem[DMVV]{DMVV}
R.~Dijkgraaf, G.~Moore,  E.~Verlinde, H.~Verlinde,
{\it Elliptic genera of symmetric products
and second quantized strings}.
Commun. Math. Phys.
{\bf 185} (1997), 197--209.


\bibitem[G1]{G1} V.~Gritsenko,
{\it Jacobi functions of n-variables}.
Zap. Nauk. Sem. LOMI
{\bf 168} (1988), 32--45;
English transl. in J. Soviet Math. {\bf 53}
(1991), 243--252.

\bibitem[G2]{G-K3} V.~Gritsenko,
{\it Modular forms and moduli spaces of
abelian and $\Kthree$ surfaces}. Algebra i Analiz {\bf 6} (1994),
65--102; English translation in St. Petersburg Math. J. {\bf 6}
(1995), 1179--1208.

\bibitem[G3]{G-Ab} V.~Gritsenko,
{\it Irrationality of the moduli spaces of polarized Abelian surfaces}.
Intern. Math. Res. Notices
{\bf 6} (1994), 235--243.

\bibitem[G4]{G-EG} V.~Gritsenko,
{\it Elliptic genus of Calabi-Yau manifolds and Jacobi and
Siegel modular forms}.
St. Petersburg Math. J. {\bf 11} (1999), 100--125.

\bibitem[G5]{G10} V.~Gritsenko, {\it Reflective modular forms and
algebraic geometry}. ArXiv: 1005.3753, 28 pp.

\bibitem[GH1]{GH1} V.~Gritsenko, K.~Hulek,
{\it The modular form of the Barth--Nieto quintic}.
Intern. Math. Res. Notices {\bf 17} (1999), 915--938.

\bibitem[GH2]{GH2} V.~Gritsenko, K.~Hulek,
{\it Commutator covering of Siegel threefolds}.
Duke Math. J. {\bf 94} (1998), 509--542.

\bibitem[GHS1]{GHS} V.~Gritsenko, K.~Hulek, G.~Sankaran,
{\it The Kodaira dimension of the moduli of K3 surfaces}.
Inventiones Math. {\bf 169} (2007), 215--241.

\bibitem[GHS2]{GHS2} V.~Gritsenko, K.~Hulek, G.~Sankaran,
{\it Abelianisation of orthogonal groups and the fundamental group of modular varieties}. J. of Algebra  {\bf 322}  (2009), 463--478.



\bibitem[GN1]{GN1} V.~Gritsenko, V.~Nikulin,
{\it Siegel automorphic form correction of some Lorentzi\-an
Kac--Moody Lie algebras}.
Amer. J. Math. {\bf 119} (1997), 181--224.


\bibitem[GN2]{GN2} V.~Gritsenko, V.~Nikulin,
{\it Automorphic forms and Lorentzian Kac-Moody algebras.  I}.
International J. Math.
{\bf 9} (1998), 153--200.

\bibitem[GN3]{GN3} V.~Gritsenko, V.~Nikulin,
{\it Automorphic forms and Lorentzian Kac-Moody algebras.  II}.
International J. Math.
{\bf 9} (1998), 200--275.

\bibitem[GN4]{GN4} V.~Gritsenko, V.~Nikulin,
{\it The arithmetic mirror symmetry and Calabi--Yau manifolds}.
Comm. Math. Phys. {\bf 200} (2000), 1--11.


\bibitem[GSZ]{GSZ} V.~Gritsenko, N.-P.~Skoruppa, D.~Zagier,
{\it Theta-blocks}. Preprint.

\bibitem[HM]{HM} J.A.~Harvey, G.~Moore,
{\it Exact gravitational threshold correction
in the Ferrara-Harvey-Strominger-Vafa model}.
Phys. Rev. D {\bf 57} (1998), 2329--2336.


\bibitem[K]{K}  V.~Kac,
{\it Infinite dimensional Lie algebras}. Cambridge Univ. Press, 1990.


\bibitem[KP]{KP} V.~Kac, D.H.~Peterson,
{\it Infinite-dimensional Lie algebras, theta functions and modular forms.} Adv. in Math. {\bf 53} (1984), 125--264.

\bibitem[Kl]{Kl}
H.~Klingen, {\it Uber Kernfunktionen f\"ur Jacobiformen und
Siegelsche Modulformen.}
Math. Ann. {\bf 285} (1989), 405--416.



\bibitem[Mu]{Mu} D.~Mumford, {\it Tata lectures on theta I}.
Progress in Mathem. {\bf 28},
Birkh\"auser, Boston, Mass., 1983.

\bibitem[Sch]{Sch} N.~Scheithauer,
{\it On the classification of automorphic products and generalized Kac-Moody algebras}.
Inventiones Math.  {\bf 164}  (2006), 641--678.

\bibitem[Sk]{Sk} N.-P.~Skoruppa
{\it Jacobi forms of critical weight and Weil representations}.
Modular forms on Schiermonnikoog, 239--266, Cambridge Univ. Press,
Cambridge, 2008.

\bibitem[Sh]{Sh} G.~Shimura, {\it Introduction to the arithmetic theory
of automorphic functions}.
Princeton Univ. Press, 1971.

\bibitem[Y]{Y} K.-I.~Yoshikawa, {\it Calabi-Yau threefolds of Borcea-Voisin,
analytic torsion, and Borcherds products}.
Ast\'erisque, {\bf 327} (2009), 351--389.
\end{thebibliography}

\medskip
\noindent
F.~Cl\'ery\\
Korteweg de Vries Instituut voor Wiskunde\\
Universiteit van Amsterdam\\
P.O. Box 94248\\
1090 GE AMSTERDAM\\
f.l.d.clery@uva.nl
\bigskip

\noindent
V.~Gritsenko\\
University Lille 1\\
Laboratoire Paul Painlev\'e\\
F-59655 Villeneuve d'Ascq, Cedex\\
Valery.Gritsenko@math.univ-lille1.fr\\

\end{document}